\DeclareFontFamily{OT1}{pzc}{}
\DeclareFontShape{OT1}{pzc}{m}{it}{ <-> s*[1.2] pzcmi7t }{}
\DeclareMathAlphabet{\mathpzc}{OT1}{pzc}{m}{it}
\newtheorem{thrm}{Theorem}[section]
\newtheorem{lemma}[thrm]{Lemma}
\newtheorem{prop}[thrm]{Proposition}
\newtheorem{cor}[thrm]{Corollary}
\newtheorem{defn}[thrm]{Definition}
\newtheorem{ex}[thrm]{Example}
\newtheorem{que}[thrm]{Question}
\newtheorem{rem}[thrm]{Remark}
\DeclareMathOperator{\Av}{Av}
\newcommand{\Harpoon}[1]{\hspace{-0.3mm}\! \upharpoonright_{#1}}
\newcommand{\h}{\hspace{0.75mm}}
\newenvironment{thmenumerate}{
\begin{enumerate}[label=\textup{(\roman*)}, widest=(ii), leftmargin=9mm,itemsep=1mm,topsep=0mm]}{
\end{enumerate}}
\newcommand{\N}{\mathbb{N}}
\newcommand{\mc}{\mathcal}
\newcommand{\Rev}[1]{{\color{black}#1}}
\author{V. Ironmonger \and N. Ru\v{s}kuc}
\title[Combinatorial structures under consecutive orders]{Well quasi-order and atomicity for combinatorial structures under consecutive orders}
\affiliation{University of St Andrews, St Andrews, United Kingdom\\}
\keywords{Combinatorics, combinatorics of partially ordered sets, directed graphs (digraphs), tournaments, paths and cycles, model theory of finite structures.}
\begin{document}

\publicationdata{vol. 28:1, Permutation Patterns 2025}{2026}{6}{10.46298/dmtcs.16905}{2025-11-12; 2025-11-12; 2026-04-22; 2026-06-02}{2026-06-19}

\maketitle

\begin{abstract}
\vspace{3mm}
  We consider partially ordered sets of combinatorial structures under consecutive orders, meaning that two structures are related when one embeds in the other such that `consecutive' elements remain consecutive in the image.  Given such a partially ordered set, we may ask decidability questions about its \emph{avoidance sets}: subsets defined by a finite number of forbidden substructures.  Two such questions ask, given a finite set of structures, whether its avoidance set is well quasi-ordered (i.e. contains no infinite antichains) or atomic (i.e. cannot be expressed as the union of two proper subsets).  Extending some recent new approaches, we will establish a general framework, which enables us to answer these problems for a wide class of combinatorial structures, including graphs, digraphs and collections of relations.
\end{abstract}


\section{Introduction}

The purpose of this paper is to investigate some general conditions which render the well quasi-order and atomicity problems decidable for various partially ordered sets (or \emph{posets}) of combinatorial structures under consecutive orders.  Many combinatorial structures can be viewed as relational structures, and it is this viewpoint that we will take.  For example, a digraph consists of a set with a single binary relation, while graphs can be viewed as sets with symmetric binary relations.  Various orderings for such relational structures have been studied, such as embedding orderings (e.g. \cite{Ding}), strong embedding orderings (e.g. \cite{Braun2}), homomorphic image orderings (e.g. \cite{HRhomo}), and homomorphism orderings (e.g. \cite{homo1}).  

We will concentrate on \emph{consecutive orderings}, a variation of (strong) embedding orderings in which embeddings are required to preserve an underlying linear order.  Intuitively, this can be thought of as assigning points of structures the numbers $1, \dots, n$ and requiring embeddings to preserve the natural linear order on $\mathbb{N}$.  Consecutive orders arise naturally for permutations and words, where the underlying linear order gives the `left to right' order of elements, and these posets have been studied in some depth (see \cite{elizalde} for a survey on permutations and \cite{kitaev},  Chapter 5, for information on both).  We will extend the notion of consecutive orderings to combinatorial structures more generally.

The set of all structures of a certain type becomes a poset with the consecutive order, and it is these posets that we will investigate.  We will consider certain distinguished subsets, namely downward closed subsets, also called \emph{avoidance sets}.  Informally, given a finite set $B$, the avoidance set $\Av(B)$ is the subset of structures which do not contain any element of $B$ as a substructure.

We will study two order-theoretic properties for \Rev{avoidance sets of} these posets: well quasi-order, often shortened to \emph{wqo} (which corresponds to the absence of infinite antichains), and atomicity (the property of being indecomposable as the union of two proper, downward closed subsets).  We present our findings within the framework of two decidability questions:

\begin{itemize}
    \item The \emph{well quasi-order problem}: is it decidable, given a finite set $B$, whether $\Av(B)$ is well quasi-ordered? 
    \item The \emph{atomicity problem}: is it decidable, given a finite set $B$, whether $\Av(B)$ is atomic?
\end{itemize}

Two major theorems concern the well quasi-order problem for certain posets of combinatorial structures.  It follows from Higman's celebrated lemma \cite{higman} that the wqo problem is trivially decidable for words under the non-consecutive subword order.  Similarly, the Graph Minor Theorem \cite{graphminor} asserts that the poset of graphs under the minor order is wqo, rendering the wqo problem trivially decidable.  On the other hand, the wqo problem is notably open for graphs under the induced subgraph order and permutations under the classical subpermutation order.  Details on wqo for various combinatorial posets can be found in \cite{HucRus}.

The atomicity problem is decidable for equivalence relations and words under non-consecutive orders (\cite{ir}, \cite{AtomicityAL}), and open for permutations under the non-consecutive subpermutation order.  A recent significant contribution by Braunfeld shows that the atomicity problem is undecidable for 3-dimensional permutations \cite{Braun1}, which may indicate a similar undecidable result for the permutation case.  Braunfeld has also proved undecidability of the atomicity problem for finite graphs under the induced subgraph order \cite{Braun2}.

Turning to consecutive orders, in \cite{mr} and \cite{ir} a link was identified between paths in certain digraphs under the subpath order and the wqo and atomicity problems for words, permutations and equivalence relations under consecutive orders.  These results have many similarities and also some intriguing differences.  In this paper, we investigate the applicability of this methodology in general.  We formulate some conditions -- called \emph{validity} and \emph{bountifulness} -- under which the well quasi-order and atomicity problems can be approached in a uniform manner by studying certain digraphs, yielding various results on how well quasi-order and atomicity are governed (Theorems \ref{thrm wqo bountiful} and \ref{thrm atomicity} and Corollaries \ref{cor atomic bountiful} and \ref{cor nearly there}).  Many well known classes of structures satisfy these properties, such a graphs, digraphs and collections of relations, enabling us to prove decidability of the wqo and atomicity problems for these structures (see Corollaries \ref{cor bountiful egs} and \ref{cor atomic bountiful egs}).  At the end of the paper, we will explore the limitations of these methods, and give an example of structures for which we need to make adjustments to show decidability of the well quasi-order and atomicity problems.

The contents of this paper also appears in the first author's thesis \cite{VIThesis}, and the structure of this paper is as follows.  In Section \ref{sec prelims} we introduce the formal definitions of the concepts mentioned so far, and in Section \ref{sec graph th} we give the necessary ideas and results from graph theory.  Section \ref{sec factor graphs} generalises the \emph{factor graphs} of \cite{mr} and \cite{ir} and gives some related results which will prove useful later.  

Sections \ref{sec valid} and \ref{sec bountiful} introduce \emph{valid} and \emph{bountiful} types of structures respectively.  These notions will allow us to consider the wqo and atomicity problems for a wide class of structures in a uniform way.  Following this, Sections \ref{sec wqo bountiful} and \ref{sec atomicity bountiful} explore the wqo and atomicity problems respectively for bountiful structures.  These investigations yield the main technical results of this paper: decidability of the wqo and atomicity problems for bountiful structures, and a result which offers considerable progress on the atomicity problem for valid structures under the consecutive order (Corollary \ref{cor nearly there}).

In the later sections, we take a different approach, and examine a type of structure which is not valid: a class of permutations called \emph{double ascents}.  In this case, we prove the wqo and atomicity problems to be decidable by relating them to the analogous questions for words.  Section \ref{sec words cons} sets the groundwork for this by recalling McDevitt and Ru\v{s}kuc's investigation into words under the consecutive order \cite{mr} and placing this in the overall framework we have developed.  The final technical sections consider the wqo and atomicity problems for double ascents.  Section \ref{sec double ascents} introduces double ascents and addresses a technicality in situating permutations within the framework we have established.  Section \ref{sec DAs and words} connects double ascents with words over an alphabet of size two, enabling us to tackle the wqo and atomicity problems by applying the results for words in \cite{mr}.  This is achieved in Sections \ref{sec DAs wqo} and \ref{sec DAs atomicity} for well quasi-order and atomicity respectively.

We finish with concluding remarks and a discussion of open problems in Section \ref{sec conclusion}.
 

\section{Preliminaries}
\label{sec prelims}

We will consider relational structures of the form $(X, R)$, where $X$ is a finite set and $R$ is a sequence of relations on $X$.  The sequence of arities of the relations in $R$ forms the \emph{signature} of a structure.  In order to define consecutive orders, we will require $R$ to contain a linear order; without loss of generality we will take this to be the first relation in $R$.  We may view two such relational structures to be of the same kind if they have the same signature and, perhaps, satisfy certain additional conditions.  For example, graphs are relational structures whose signature contains a single symmetric binary relation (plus a linear order, for our purposes).  We will study posets of relational structures of the same kind under the consecutive order.  Given a set $X$ and a partial order $\leq$ on $X$, we will denote the poset of $X$ with $\leq$ by $(X, \leq)$.

Throughout this paper, we will take $\mathbb{N}=\{1, 2, 3, \dots\}$ and $[m, n] = \{m, m+1, \dots, n\}$ for $m, n \in \mathbb{N}$, where $m\leq n$.  We will take the underlying sets of our relational structures to be subsets of $\mathbb{N}$, and the linear order forming the first relation to be the natural one inherited from $\mathbb{N}$.  Two structures of the same kind will be \emph{isomorphic} if, when we relabel the smallest points in their natural linear orders `1', their second smallest `2', and so on, the resulting structures are exactly the same; this will be denoted by $\cong$.  As such, any structure is isomorphic to a unique structure whose underlying set is of the form $[1, n]$ for $n \in \mathbb{N}$.  We consider isomorphic structures to be equal, and we will often use the structure with underlying set $[1, n]$ to represent its class of isomorphic structures.  We denote the restriction of $\rho =(X, R)$ to points in $Y \subseteq X$ by $\rho\Harpoon{Y}$.  The \emph{length} of $(X, R)$ will be $|X|$.  

\begin{ex}
\label{ex intro}
Consider digraphs -- structures whose signature contains our obligatory linear order $\leq$ along with a binary relation.  In a digraph $G=(V_G, \leq, \rho)$ there is a directed edge from $u \in V_G$ to $v \in V_G$ if and only if $(u, v) \in \rho$.  We view the numbers of their underlying sets to label the vertices of digraphs.  Figure \ref{fig intro} shows two digraphs, called $G$ and $H$, along with $G\Harpoon{[2, 3]}$ (reading left to right).  It can be seen that $G \cong H$ and that $|G| = 3$.
\end{ex}

\begin{figure}
\begin{center}
\begin{tikzpicture}[> =  {Stealth [scale=1.3]}, thick]
\tikzstyle{everystate} = [thick]
\node [state, shape = ellipse, minimum size = 10pt] (1) at (0,0) {1};
\node [state, shape = ellipse, minimum size = 10pt] (3) at (2, 0) {3};
\node [state, shape = ellipse, minimum size = 10pt] (2) at (1, 1.5) {2};

\node [state, shape = ellipse, minimum size = 10pt] (4) at (4,0) {4};
\node [state, shape = ellipse, minimum size = 10pt] (9) at (6, 0) {9};
\node [state, shape = ellipse, minimum size = 10pt] (7) at (5, 1.5) {7};

\node [state, shape = ellipse, minimum size = 10pt] (2') at (8,0) {2};
\node [state, shape = ellipse, minimum size = 10pt] (3') at (10, 0) {3};

\path[->]
(1) edge node {} (2)
(1) edge node {} (3)
(4) edge node {} (7)
(4) edge node {} (9)
;
\end{tikzpicture}
\end{center}
\caption{The graphs $G$, $H$ and $G\Harpoon{[2, 3]}$ respectively from Example \ref{ex intro}.}
\label{fig intro}
\end{figure}
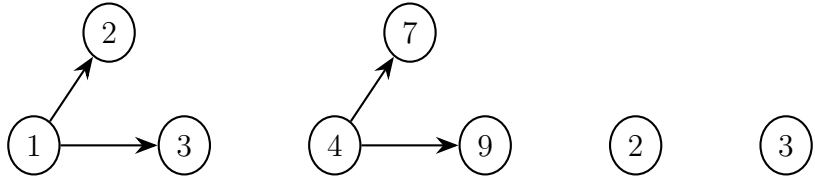

Consecutive orders have been studied for structures such as permutations and words (for example, in \cite{mr}), both of which already have linear orders in their signatures.  In these cases, two structures are related under the consecutive order when there exists an embedding between them which respects the underlying linear orders.  The stipulation that the signatures of our relational structures contain linear orders enables us to extend the concept of consecutive orders to relational structures more generally.

\begin{defn}
Let $X=\{x_1, \dots, x_n\}$, $Y=\{y_1, \dots, y_m\}$ be sets with linear orders $\leq_X$, $\leq_Y$ on them respectively, such that $x_1 \leq_X x_2 \dots \leq_X x_n$ and $y_1 \leq_Y y_2 \leq_Y \dots \leq_Y y_m$.  We will say a mapping $f: X \rightarrow Y$ is \emph{contiguous} with respect to $\leq_X$ and $\leq_Y$ if, given $f(x_1) = y_i$, then $f(x_k) = y_{i+k-1}$ for $k \in [1, n]$.
\end{defn}

\begin{defn}
Let $R=(X, R^{\prime})$ and $S=(Y, S^{\prime})$ be relational structures of the same kind, with underlying linear orders $R_1$ and $S_1$ respectively.  Then $(X, R^{\prime}) \leq (Y, S^{\prime})$ under the \emph{consecutive order} if and only if there exists a contiguous mapping $f:X \rightarrow Y$ with respect to $R_1$ and $S_1$ such that $R \cong S \Harpoon{f(X)}$. 
\end{defn}

Note that all contiguous maps are injective, meaning that, when two structures are related under the consecutive order, one embeds in the other.

We will now introduce the main posets we will consider.

\begin{defn}
\label{defn all posets}
We will denote the posets of each of the following structures under consecutive orders as described here:\\
\\
\begin{tabular}{c c}
\hspace{-2.1cm} Graphs: $\mathcal{G}$; & \hspace{-2.2cm} Permutations: $\mc{P}$;\\
\hspace{-1.1cm} Simple graphs: $\mathcal{S}$; & \hspace{-1.1cm} Equivalence relations: $\mc{E}$;\\
\hspace{-1.8cm} Digraphs: $\mathcal{D}$; &  \hspace{-2.2cm} Linear orders: $\mc{L}$;\\
\hspace{-1.3cm} Tournaments: $\mathcal{T}$; & \hspace{-3cm} Posets: $\mc{PO}$;\\
\hspace{-2.1cm} Forests: $\mathcal{F}$; & \hspace{0.4cm} Words over a finite alphabet $A$: $\mc{W}_A$.\\
\hspace{-7.5mm} Relational structures & \\
with signature $\sigma$: $\mathcal{R}_{\sigma}$;&\\
\end{tabular}
\end{defn}

For most of these structures, it is clear how they can be viewed as relational structures and so how they fit into our overall framework.  Exceptions are perhaps words and permutations.  Here, words are sets with a linear order and a family of unary relations, while permutations may be viewed as relational structures consisting of a set with two linear orders.  We will explain these ideas fully in Sections \ref{sec words cons} and \ref{sec double ascents} respectively.

It can be seen that each of these posets is infinite.  We will be interested in two properties of these posets -- well quasi-order and atomicity.

\begin{defn}
A poset is \emph{well quasi-ordered (wqo)} if it contains neither infinite antichains nor infinite descending sequences.
\end{defn}

For all of the posets we consider, the latter condition holds, as any descending sequence of structures cannot continue beyond structures of length one.  Thus, for our purposes, wqo will be equivalent to the absence of infinite antichains.

\begin{ex} \hspace{7cm}

\begin{itemize}
\item It can immediately be seen that any finite poset is wqo.  

\item A celebrated result of Higman \cite{higman}, known as Higman's Lemma, can be used to show wqo for the poset of words over an alphabet $A$ under the \emph{domination order}. (The domination order is defined as follows: $u_1u_2\dots u_n \leq v_1v_2 \dots v_m$ under the domination order if and only if there is a sequence $i_1 < i_2 < \dots < i_n$ from $[1, m]$ such that $u_j = v_{i_j}$ for each $j$).

\item On the other hand, the \Rev{poset} $\mc{W}_A$ is not wqo whenever $|A| > 1$ -- for example, if $A = \{a, b\}$ then $aa, aba, abba, abbba, \dots$ forms an infinite antichain.
\end{itemize}
\end{ex}

\begin{defn}
A subset $Y \subseteq X$ of a poset $(X, \leq)$ is \emph{downward closed} if $x \in Y$ and $y \leq x$ together imply that $y \in Y$.
\end{defn}

\begin{defn}
An \emph{atomic} set $Y$ is a downward closed subset of a poset $(X, \leq)$ which cannot be written as $Y=U \cup V$ for any two downward closed, proper subsets $U, V$ of $Y$. 
\end{defn}

Atomic sets are also known as ideals, ages and directed sets (for instance, in \cite{fraisse00}).  The following proposition gives an equivalent condition to \Rev{atomicity}, which we will generally use rather than the previous definition; a proof can be found in \cite[Section 2.3.11]{fraisse00}.

\begin{prop}
A subset $Y$ of a poset $(X, \leq)$ is atomic if and only if for any $x, y \in Y$ there exists $z \in Y$ such that $x, y \leq z$; this property is know as the \emph{joint embedding property (JEP)}.  When $x, y \leq z$, we will say that $z$ \emph{joins} $x$ and $y$, or simply that $x$ and $y$ \emph{join}.
\end{prop}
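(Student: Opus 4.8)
The plan is to prove the two implications separately, handling the direction ``atomic $\Rightarrow$ JEP'' by contraposition, and to keep everything at the level of elementary order theory.

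For the direction ``JEP $\Rightarrow$ atomic'', I would argue by contradiction. Suppose $Y$ has the joint embedding property but can be written as $Y = U \cup V$ with $U$ and $V$ downward closed proper subsets of $Y$. Since $U \neq Y$ and $V \neq Y$, choose $y \in Y \setminus U$ and $x \in Y \setminus V$; note that $x \in U$ and $y \in V$ because $Y = U \cup V$. Applying JEP to $x$ and $y$ yields some $z \in Y$ with $x, y \leq z$. As $z \in U \cup V$, it lies in at least one of them; if $z \in U$, then downward closure of $U$ together with $y \leq z$ forces $y \in U$, contradicting the choice of $y$, and the case $z \in V$ is symmetric. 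Hence no such decomposition exists, so $Y$ is atomic.

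For the converse, I would prove the contrapositive: if $Y$ fails JEP, then $Y$ is not atomic. Pick $x, y \in Y$ with no common upper bound in $Y$, and set $U = \{\, w \in Y : x \not\leq w \,\}$ and $V = \{\, w \in Y : y \not\leq w \,\}$. Each set is downward closed, since $w' \leq w$ and $x \leq w'$ would give $x \leq w$, so $x \not\leq w$ implies $x \not\leq w'$ (and likewise for $V$). Each is a proper subset of $Y$, because $x \leq x$ gives $x \notin U$ and similarly $y \notin V$ (so $U, V \neq Y$). And $U \cup V = Y$: if some $w \in Y$ lay in neither, then $x \leq w$ and $y \leq w$, making $w$ a common upper bound of $x$ and $y$ in $Y$, contrary to assumption. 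Thus $Y = U \cup V$ exhibits $Y$ as a union of two proper downward closed subsets, so $Y$ is not atomic.

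The whole argument is routine; the only point that needs a moment's thought is guessing the correct downward closed sets in the contrapositive of the first implication, namely the complements in $Y$ of the principal up-sets of $x$ and of $y$, and then verifying the three bookkeeping conditions (downward closure, properness, covering). The degenerate cases $Y = \emptyset$ and $|Y| = 1$ need no separate treatment: such $Y$ are atomic and satisfy JEP trivially, and they cause no issue in either direction above.
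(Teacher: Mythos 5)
Your proof is correct and complete. The paper itself does not prove this proposition; it simply cites Fra\"{\i}ss\'{e} \cite{fraisse00} for it, so there is no in-paper argument to compare against. Your two-step argument is the standard one: the contradiction in the direction ``JEP $\Rightarrow$ atomic'' and the explicit decomposition $U = \{w \in Y : x \not\leq w\}$, $V = \{w \in Y : y \not\leq w\}$ in the converse are exactly the right choices, and all three verifications (downward closure, properness via $x \notin U$ and $y \notin V$, covering) go through. The only blemish is in your closing commentary, where you refer to the decomposition as arising in ``the contrapositive of the first implication''; it in fact belongs to the contrapositive of the second implication (atomic $\Rightarrow$ JEP) as you ordered them. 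This does not affect the proof.
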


\begin{ex}
The posets $\mc{G}, \mc{S}, \mc{D}$ and $\mc{F}$ are all atomic as they satisfy the JEP: for two graphs $G, H$ in any of these posets, taking $F$ to be the disjoint union of $G$ and $H$, we obtain a graph $F$ such that $G, H \leq F$.  In fact, all of the posets listed in Definition \ref{defn all posets} are atomic.  See Example \ref{ex av sets} for an example of a non-atomic poset.
\end{ex}

\begin{defn}
Given a poset $(X, \leq)$ and $B \subseteq X$, the \emph{avoidance set} of $B$ is the downward closed set of elements which \emph{avoid} $B$:
\begin{equation*}
\Av(B) = \{ x \in X \hspace{2mm}|\hspace{2mm} b \nleq x \text{ for all } b \in B\}.
\end{equation*}
\end{defn}

Downward closed subsets of a poset $(X, \leq)$ may always be expressed as avoidance sets.  To see this, let $Y$ be a downward closed subset of $(X, \leq)$ and note that $Y=\Av(X \backslash Y)$.  Further, since the posets we consider do not contain infinite descending chains, we can express any such subset as $Y=\Av(B)$ where $B$ is the antichain consisting of the minimal elements of $X \backslash Y$;  this choice of $B$ is the unique antichain such that $Y=\Av(B)$.  In this case, $B$ is a \emph{basis} for $Y$; if $B$ is finite, then $Y$ is \emph{finitely based}.  


\begin{ex}
\label{ex av sets}
The avoidance set $\Av(ab, ba)$ of $\mc{W}_{\{a, b\}}$ consists of all of the words containing \Rev{either only $a$'s or only $b$'s}.  It is not atomic because it does not satisfy the JEP -- for instance, there is no word in $\Av(ab, ba)$ containing both $aa$ and $bb$ as subwords.  Indeed, observe that $\Av(ab, ba) = \Av(a) \cup \Av(b)$.
\end{ex}

Finitely based avoidance sets give rise to natural decidability questions: given a finite subset $B$ of a poset, we may ask about decidability of properties of $\Av(B)$.  We study two such questions in this paper: the well quasi-order and atomicity problems.  The \emph{well quasi-order problem} asks if it is decidable, given a finite set $B$, whether $\Av(B)$ is well quasi-ordered?  Similarly, the \emph{atomicity problem} asks if it is decidable, given a finite set $B$, whether $\Av(B)$ is atomic?  

We will be interested in these questions for the posets listed in Definition \ref{defn all posets}.  They have already been answered for $\mc{W}$ in \cite{mr} (and, for wqo, in \cite{atminas}), for $\mc{P}$ in \cite{mr}, and for $\mc{E}$ in \cite{ir}.  In this paper we will answer them for a broad class of posets, which includes $\mc{G, S, D, T}$ and $\mc{R}_{\sigma}$.  To do this, we present a framework that generalises the methods of \cite{mr} and \cite{ir}.  We will also indicate how $\mc{L}$ fits into this overall picture, as well as giving examples which demonstrate the limitations of this approach.  One poset that cannot be studied in this way is $\mc{F}$; in this case, the solution to the wqo problem is presented in the first author's thesis \cite{VIThesis}, while the atomicity problem remains open.  Our methods do not apply to $\mc{PO}$, and so both the wqo and atomicity problems are open for this poset. 

To begin with, we will restrict our considerations to \emph{valid} structures, and in particular to those which have a certain additional property, called \emph{bountiful} structures.  Following this, we look into the wqo and atomicity problems for an example of a structure type which is not valid.

\section{Definitions and results from graph theory}
\label{sec graph th}

In this section we give definitions and results from graph theory which will be needed later to tackle the wqo and atomicity problems for the posets of interest.  Note that this section pertains to standard graphs and digraphs, so there is no assumption of an underlying linear order.

\begin{defn}
A \emph{digraph} is a pair $(V, E)$, where $V$ is a set of vertices and $E \subseteq V \times V$ is a set of directed edges.  If $(u, v) \in E$, we will write $u \rightarrow v$ to indicate that there is an edge from $u$ to $v$.  A digraph is \emph{finite} if $V$ is finite.
\end{defn}

\begin{defn}
A \emph{path} in a digraph is a sequence $v_{1} \rightarrow \dots \rightarrow v_{n}$ of vertices; the number of edges $n-1$ is the path's \emph{length}.  A path is \emph{simple} if all of its vertices are distinct and is a \emph{cycle} if $v_{1} = v_{n}$ and it has length $\geq 1$.  \Rev{A \emph{loop} is a cycle of length 1.}  A cycle is a \emph{simple cycle} if its only repeated vertex is the start/end vertex.  
\end{defn}

\begin{defn}
Let $\pi = u_1 \rightarrow \dots \rightarrow u_n$ and $\eta = v_1 \rightarrow \dots \rightarrow v_m$ be paths in a digraph such that $u_n = v_1$.  The concatenation of $\pi$ and $\eta$ is the path $\pi\eta = u_1 \rightarrow \dots \rightarrow u_n \rightarrow v_2 \rightarrow \dots \rightarrow v_m$.
\end{defn}

\begin{defn}
Given a path $\pi=v_{1} \rightarrow \dots \rightarrow v_{n}$, a \emph{subpath} of $\pi$ is any path $v_{i} \rightarrow \dots \rightarrow v_{j}$ where $1 \leq i \leq j \leq n$.  
\end{defn}

\begin{defn}
A digraph $G=(V, E)$ is \emph{strongly connected} if for any two vertices $u, v \in V$ there is a path in $G$ from $u $ to $v$.
\end{defn}

\begin{defn}
The \emph{in-degree} of a vertex $u$ in a digraph $(V, E)$ is $|\{v \in V : (v, u) \in E\}|$.  The \emph{out-degree} of $u$ is $|\{v \in V : (u, v) \in E\}|$.
\end{defn}

\begin{defn}
A cycle is an \emph{in-out cycle} if at least one vertex has in-degree $>1$ and at least one vertex has out-degree $>1$.
\end{defn}

\begin{defn}
If $\eta, \pi$ are paths in a finite digraph, they are related under the \emph{subpath order} if and only if $\eta$ is a subpath of $\pi$; this is written $\eta \leq \pi$.
\end{defn}

\begin{defn}
Let $G$ be a finite digraph.  A \emph{path complete decomposition} of $G$ is a collection of subgraphs $G_{1}, \dots, G_{n}$ of $G$ such that every path in $G$ is wholly contained in one of the $G_{i}$.
\end{defn}

We now introduce the concept of a \emph{bicycle}, which will be key in identifying when the set of paths of a digraph is wqo or atomic under the subpath order.  

\begin{defn}
A \emph{bicycle} in a digraph consists of two vertex-disjoint, simple cycles connected by a simple path whose internal vertices are disjoint from both cycles.  Either or both cycles may be absent, but if neither cycle is absent then the connecting path must have length at least one.  The first and second cycle will be called the \emph{initial cycle} and \emph{terminal cycle} respectively. 
\end{defn}

\begin{ex}
Figure \ref{fig bicycles} gives examples of different bicycles.
\end{ex}

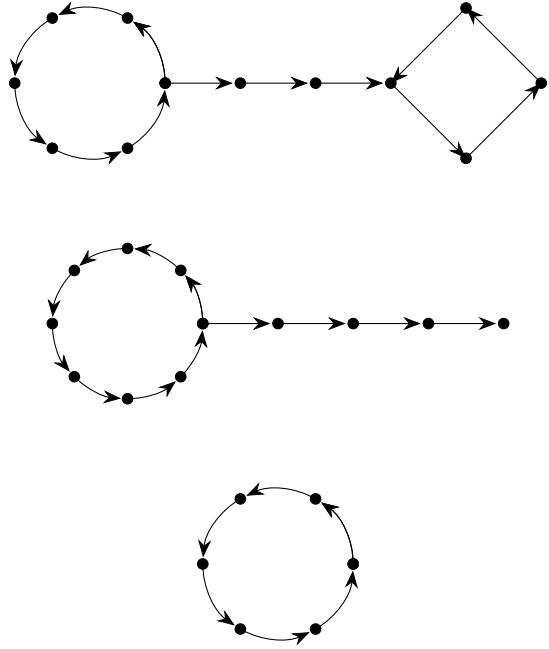
\begin{figure}
\begin{center}

\begin{tikzpicture}
\foreach \x in {0,60,...,360}
   {\draw [-{Stealth [scale=1.4]}] (\x:10mm) arc (\x:\x+55:10mm);}

\foreach \x in {0,60,...,360}
  {\draw [fill] (\x:10mm) circle (0.7mm);}
   
\draw [fill] (20mm,0mm) circle (0.7mm);
\draw [fill] (30mm,0mm) circle (0.7mm);
\draw [fill] (40mm,0mm) circle (0.7mm);

\draw [-{Stealth [scale=1.4]}] (10mm,0mm)--(19mm,0mm);
\draw [-{Stealth [scale=1.4]}] (20mm,0mm)--(29mm,0mm);
\draw [-{Stealth [scale=1.4]}] (30mm,0mm)--(39mm,0mm);

\draw [fill] (60mm,0mm) circle (0.7mm);
\draw [fill] (50mm,-10mm) circle (0.7mm);
\draw [fill] (50mm,10mm) circle (0.7mm);

\draw [-{Stealth [scale=1.4]}] (40mm,0mm)--(50mm,-10mm);
\draw [-{Stealth [scale=1.4]}] (50mm,-10mm)--(60mm,0mm);
\draw [-{Stealth [scale=1.4]}] (60mm,0mm)--(50mm,10mm);
\draw [-{Stealth [scale=1.4]}] (50mm,10mm)--(40mm,0mm);
\end{tikzpicture}

\vspace{1cm}

\begin{tikzpicture}
\foreach \x in {0,45,...,360}
   {\draw [-{Stealth [scale=1.4]}] (\x:10mm) arc (\x:\x+40:10mm);}

\foreach \x in {0,45,...,360}
  {\draw [fill] (\x:10mm) circle (0.7mm);}
   
\draw [fill] (20mm,0mm) circle (0.7mm);
\draw [fill] (30mm,0mm) circle (0.7mm);
\draw [fill] (40mm,0mm) circle (0.7mm);
\draw [fill] (50mm,0mm) circle (0.7mm);

\draw [-{Stealth [scale=1.4]}] (10mm,0mm)--(19mm,0mm);
\draw [-{Stealth [scale=1.4]}] (20mm,0mm)--(29mm,0mm);
\draw [-{Stealth [scale=1.4]}] (30mm,0mm)--(39mm,0mm);
\draw [-{Stealth [scale=1.4]}] (40mm,0mm)--(49mm,0mm);
\end{tikzpicture}

\vspace{1cm}

\begin{tikzpicture}
\foreach \x in {0,60,...,360}
   {\draw [-{Stealth [scale=1.4]}] (\x:10mm) arc (\x:\x+55:10mm);}

\foreach \x in {0,60,...,360}
  {\draw [fill] (\x:10mm) circle (0.7mm);}
   
\end{tikzpicture}

\end{center}
\caption{Various examples of bicycles.}
\label{fig bicycles}

\end{figure}

Our approach to both the wqo and atomicity problems involves encoding structures in our posets as paths in certain digraphs.  The following two results give a means of identifying when the set of paths of a digraph are wqo or atomic under the subpath order.  As such, they will be key tools in relating properties of paths to properties of the structures they encode.  These results are Theorem 3.1 and Theorem 2.1 from \cite{mr}, where they were similarly used in the investigation of wqo and atomicity for words and permutations under consecutive orders.

\begin{prop}
\label{prop digraphs wqo}
If $G$ is a finite digraph, then the following are equivalent:
\begin{enumerate}
\item The set of paths of $G$ is wqo under the subpath order;
\item $G$ contains no in-out cycles;
\item $G$ has a path complete decomposition into bicycles.\qed
\end{enumerate}
\end{prop}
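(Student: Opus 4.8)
The plan is to prove the cycle of implications $(3)\Rightarrow(1)\Rightarrow(2)\Rightarrow(3)$, each step being direct. For $(3)\Rightarrow(1)$ I would use that a finite union of subsets of a poset, each wqo under the induced order, is again wqo, together with the fact that the paths of $G$ are exactly the union of the paths of the $G_i$; so it suffices to prove that the set of paths of a single bicycle is wqo under the subpath order. A path of a bicycle with initial cycle $C_1$, connecting path $P$ and terminal cycle $C_2$ is a subpath of some traversal $C_1^{a}PC_2^{b}$ and, since a path cannot backtrack, is essentially recorded by its first and last vertices together with the numbers $a,b$ of times it winds around $C_1$ and $C_2$. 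Given an infinite set of such paths, a pigeonhole argument over the finitely many possible pairs of endpoints extracts an infinite subfamily with common endpoints, on which the subpath order restricts to the product order on the pairs $(a,b)\in\mathbb{N}^2$; Dickson's lemma then produces two comparable members. A short finite case split handles paths lying entirely inside $C_1$, entirely inside $C_2$, or meeting only a proper part of $P$.

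For $(1)\Rightarrow(2)$ I argue contrapositively. If $G$ has an in-out cycle $C$, pick $u\in C$ of in-degree $>1$ and $w\in C$ of out-degree $>1$, together with edges $x\to u$ and $w\to y$ not lying on $C$ (these exist by the degree hypotheses, and $x,y$ can be chosen so that the two edges occur nowhere else in the configuration built below). Let $\pi_n$ traverse $x\to u$, then wind around $C$ from $u$ back to $u$ exactly $n$ times, then follow the arc of $C$ from $u$ to $w$, then the edge $w\to y$. Then $\pi_n\not\le\pi_{n'}$ for $n<n'$ on comparing lengths, while $\pi_{n'}\not\le\pi_n$ because the marker edges $x\to u$ and $w\to y$ occur in each $\pi_m$ only at its two ends, so any embedding would be forced to equate lengths. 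Hence $\{\pi_n:n\ge 1\}$ is an infinite antichain and the paths of $G$ are not wqo.

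The implication $(2)\Rightarrow(3)$ is the crux, and I expect it to be the main obstacle. The first step is a structural claim: if $G$ has no in-out cycle, then for every strongly connected component $S$ of $G$ that contains a cycle, the restriction of $S$ to its internal edges is a single simple cycle. Indeed, a finite strongly connected digraph in which every vertex has out-degree $1$ must be a single simple cycle (and symmetrically for in-degree $1$); so if the claim failed, $S$ would contain a vertex $v$ of internal out-degree $\ge 2$ and a vertex $w$ of internal in-degree $\ge 2$, hence of out-degree $\ge 2$, respectively in-degree $\ge 2$, in $G$, and then a path in $S$ from $v$ to $w$ followed by a path from $w$ to $v$ would form a closed walk — a cycle — passing through both, that is, an in-out cycle of $G$, a contradiction.

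The second step passes to the condensation of $G$ into a DAG. A path $\pi$ visits a sequence of distinct components $S_1,\dots,S_r$ and decomposes as $\pi=\pi_1 e_1\pi_2 e_2\cdots e_{r-1}\pi_r$, where $\pi_i$ lies in $S_i$ and $e_i$ runs from $S_i$ to $S_{i+1}$; by the structural claim each $\pi_i$ is either a single vertex or a path winding around the simple cycle $D_i$ carried by $S_i$. For $1<i<r$ the component $S_i$ cannot carry a cycle: such a cycle would pass through the head of $e_{i-1}$ — which then has in-degree $\ge 2$ in $G$ — and the tail of $e_i$ — which then has out-degree $\ge 2$ in $G$ — giving an in-out cycle. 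So $\pi$ winds only at its two ends, whence $\pi$ is a subpath of $D_1^{k_1}\cdot\gamma\cdot D_r^{k_r}$, where $\gamma$ is the simple path $q_1 e_1 p_2 e_2\cdots e_{r-1} p_r$ joining $D_1$ to $D_r$ with internal vertices off both cycles — in other words a traversal of a bicycle of $G$. Since $G$ is finite it has only finitely many simple cycles and simple paths, hence finitely many bicycle subgraphs; taking $G_1,\dots,G_n$ to be all of these yields a path complete decomposition. The degenerate situations ($r=1$, an acyclic $S_1$ or $S_r$, one or both cycles absent) are absorbed by the flexible definition of bicycle. The technical heart is the structural claim together with the observation that the intermediate components are acyclic; everything else is bookkeeping.
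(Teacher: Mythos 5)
The paper does not actually prove this proposition: it imports it verbatim as Theorem~3.1 of \cite{mr} and marks it with a \qed, so there is no in-house argument to compare against. Your cycle of implications is the standard one, and your $(3)\Rightarrow(1)$ (finite union of wqo sets, Dickson's lemma on winding numbers after pigeonholing endpoints) and $(2)\Rightarrow(3)$ (every nontrivial strongly connected component collapses to a simple cycle, intermediate components of a path are acyclic, hence every path lives in one of the finitely many bicycle subgraphs) are correct as written; the structural claim and the condensation bookkeeping both check out against the paper's definitions.

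There is, however, a genuine gap in $(1)\Rightarrow(2)$. The paper's ``cycle'' is any closed walk ($v_1=v_n$, length $\geq 1$; only \emph{simple} cycles forbid repeated vertices), so the in-out cycle $C$ you are handed need not be simple, and your parenthetical claim that edges $x\to u$ and $w\to y$ not lying on $C$ ``exist by the degree hypotheses'' is then false. Concretely, take $V=\{u,a,b\}$ with edges $u\to a$, $a\to u$, $u\to b$, $b\to u$: the closed walk $C=u\to a\to u\to b\to u$ is an in-out cycle (here $u$ has in-degree and out-degree $2$), yet every edge of the digraph is traversed by $C$, so no marker edges off $C$ exist and your construction of the $\pi_n$ cannot even be started. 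The repair is short but needs saying: first reduce to a \emph{simple} in-out cycle. If $u$ has in-degree $\geq 2$ and $w$ has out-degree $\geq 2$ and both lie on a closed walk, take a shortest cycle $D$ through $u$; if $D$ contained no vertex of out-degree $\geq 2$ then every vertex of $D$ would have out-degree exactly $1$, so every walk starting on $D$ stays on $D$ forever, contradicting the existence of a path from $u$ to $w\notin D$ (and the case $u=w$ is immediate). For a simple $C$ the single traversed in-edge of $u$ and out-edge of $w$ leave room for markers not traversed by $C$ (they may be chords, which is harmless since the winding portion only uses traversed edges), and your antichain argument then goes through. Separately, a trivial slip: for $n<n'$ it is $\pi_{n'}\nleq\pi_n$ that follows ``on comparing lengths'' and $\pi_n\nleq\pi_{n'}$ that needs the marker edges; you have the two justifications swapped.
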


\begin{prop}
\label{prop digraphs atomic}
If $G$ is a finite digraph, the set of paths of $G$ is atomic if and only if $G$ is strongly connected or a bicycle. \qed
\end{prop}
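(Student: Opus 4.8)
The plan is to prove both implications through the joint embedding property (JEP), recalling that atomicity of a downward closed set is equivalent to the JEP. $(\Leftarrow)$ There are two cases. Suppose first that $G$ is strongly connected. Given paths $\pi,\eta$ of $G$, let $u$ be the final vertex of $\pi$ and $w$ the initial vertex of $\eta$; strong connectivity supplies a path $\theta$ from $u$ to $w$, so that the concatenation $\pi\theta\eta$ is a path of $G$ having both $\pi$ and $\eta$ as subpaths, and the JEP holds. Now suppose $G$ is a bicycle, with initial cycle $C_1$ and terminal cycle $C_2$ (either possibly absent) joined by a simple path $Q$; let $q_1$ be the vertex of $C_1$ from which $Q$ departs and $q_2$ the vertex of $C_2$ at which $Q$ arrives. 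For $N\ge 1$ let $Z_N$ denote the path of $G$ that traverses $C_1$ exactly $N$ times, starting and finishing at $q_1$, then traverses $Q$, then traverses $C_2$ exactly $N$ times, starting and finishing at $q_2$ (omitting a block if the corresponding cycle is absent). I will show that every path $\pi$ of $G$ is a subpath of $Z_N$ once $N$ is large enough; the JEP then follows, since for any $\pi,\eta$ one picks $N$ large enough for both. To prove this, note that, since $C_1$ and $C_2$ are vertex disjoint and the interior vertices of $Q$ have in- and out-degree one in $G$, each path of $G$ decomposes as a (possibly empty) walk inside $C_1$, followed by a contiguous piece of $Q$, followed by a (possibly empty) walk inside $C_2$; one then embeds such a path into $Z_N$ using the elementary fact that, for $k$ large, $k$ consecutive traversals of a simple cycle contain every bounded-length walk around that cycle as a subpath, together with the observation that $Q$ is traversed exactly once in $Z_N$.

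$(\Rightarrow)$ Assume the set of paths of $G$ is atomic. If $G$ has no edges then $G$ must be a single vertex, since two distinct vertices are unjoinable length-zero paths; a single vertex is a (degenerate) bicycle, so assume henceforth that $G$ has an edge. Then $G$ has no isolated vertex, as an isolated vertex and an edge cannot both be subpaths of a single path. Next, the strongly connected components of $G$ are totally ordered by reachability: two incomparable components would contain two vertices with no common upper bound. List the components as $S_1,\dots,S_m$, ordered so that $S_i$ reaches $S_j$ whenever $i<j$. If $m=1$ then $G$ is strongly connected and we are done, so suppose $m\ge 2$; it remains to show that $G$ is a bicycle, which amounts to: (i) at most two of the $S_i$ contain a cycle; (ii) any such $S_i$ is a single simple cycle; and (iii) apart from these cycles $G$ is a single simple path meeting them only at its ends, so that $G$ has the shape of a bicycle. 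Each point is proved by producing two unjoinable paths. The governing principle is that if a path $\zeta$ of $G$ runs from $S_i$ through $S_j$ to $S_k$ with $i<j<k$, then $\zeta$ visits $S_j$ in a single contiguous block; hence any subpath of $\zeta$ using both the edge along which $\zeta$ enters $S_j$ and the edge along which it leaves must contain that entire block, so two paths crossing a cyclic middle component by walks of different lengths have no common upper bound. This is exactly the obstruction making the digraph with edges $1\to 1$, $1\to 2$, $2\to 2$, $2\to 3$, $3\to 3$ non-atomic, via the paths $1\,2\,2\,2\,3$ and $1\,1\,1\,2\,3$. A variant of the same ``enter and get stuck'' argument, applied at the extreme components, rules out a complex first or last component and any branching adjacent to a source or a sink.

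I expect the main obstacle to be this final step of $(\Rightarrow)$: turning the single qualitative statement ``$G$ is not a bicycle'' into explicit unjoinable pairs of paths. The bicycle is a rigid structure, and it can fail in several qualitatively distinct ways --- a third cyclic component, a cyclic component that is not a simple cycle, cyclic components meeting the connecting path, a non-simple connecting path --- each calling for its own construction; although all of these are governed by the one circle of ideas sketched above, marshalling them into a clean argument rather than a sprawling case analysis is the delicate part. The $(\Leftarrow)$ direction, by contrast, is routine once the canonical long paths $Z_N$ have been set up.
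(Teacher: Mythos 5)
First, a remark on the comparison: the paper does not prove this proposition at all --- it is quoted as Theorem 2.1 of McDevitt and Ru\v{s}kuc \cite{mr} --- so your attempt can only be judged on its own terms.

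Your $(\Leftarrow)$ direction is correct and essentially complete: the concatenation argument for the strongly connected case is exactly right, and the canonical paths $Z_N$ do absorb every path of a bicycle, because the degree constraints force the decomposition (walk in $C_1$) + (piece of $Q$) + (walk in $C_2$) that you describe. The $(\Rightarrow)$ direction, however, has a genuine gap, and it is the one you flag yourself: after establishing that the strongly connected components $S_1,\dots,S_m$ are totally ordered and that a middle component containing a cycle yields unjoinable paths (the ``enter, wind $k$ times, exit'' pairs --- this part is fine), you do not actually produce unjoinable pairs for the remaining failure modes, namely an extreme component that is strongly connected but not a simple cycle, and multiple edges entering or leaving a component. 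Worse, your stated governing principle --- two paths crossing a middle component by walks of \emph{different lengths} cannot be joined --- does not apply verbatim to these cases: an extreme component is constrained on only one side, so there is no second crossing edge to pin down the block. What is needed is a slightly different alignment argument: any common extension $\zeta$ crosses from the block of one component to the rest exactly once, so if two paths of \emph{equal length} both terminate in the same component-crossing edge, their occurrences in $\zeta$ occupy identical positions and the paths must coincide. With that in hand, a first component that is not a simple cycle has a vertex $v$ of in-degree $\ge 2$ inside it (degree counting in a strongly connected digraph), and the two equal-length paths $a\to v\to P\to x\to y$ and $b\to v\to P\to x\to y$ (with $x\to y$ an exit edge) are unjoinable; dually for the last component; and two distinct component-leaving edges out of the same component are unjoinable already as length-one paths. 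Assembling these facts does give a bicycle, so your strategy is sound, but as written the forward direction stops at a sketch precisely where the work lies.
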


\section{Factor Graphs}
\label{sec factor graphs}

We will now associate the \Rev{finitely based} avoidance sets of our posets with certain digraphs, which will enable us to encode structures as paths in these digraphs.  This will lead to an investigation of the relationship between paths in the digraphs and structures in our avoidance sets, utilising observations such as Propositions \ref{prop digraphs wqo} and \ref{prop digraphs atomic}.  

\Rev{Throughout this paper, we will take $C=\Av(B)$ to be a finitely based avoidance set} and let $b$ be the maximum length of an element of $B$; if $B=\varnothing$, we take $b=1$.  We will let $C_Y=\{\sigma \in C : |\sigma|\in Y\}$ and shorten $C_{\{m\}}$ to $C_m$.

\begin{defn}
Let $m \geq b$.  The \emph{m-dimensional factor graph} $\Gamma_C^m$ of $C=\Av(B)$ is the digraph whose vertices are all structures in $C_m$ and where there is an edge $u \rightarrow v$ if and only if $u\Harpoon{[2, m]} \cong v \Harpoon{[1, m-1]}$.  We will usually work with the $b$-dimensional factor graph, in which case we omit the superscript and just write $\Gamma_C$.
\end{defn}

\Rev{We briefly note that, since factor graphs are digraphs, they can have loops.}

We associate a structure $\sigma \in C_{[m, \infty)}$ of length $n$ with the following path in the $m$-dimensional factor graph:
\begin{equation*}
\Pi(\sigma) = \sigma \Harpoon{[1, m]} \rightarrow \sigma\Harpoon{[2, m+1]} \rightarrow \dots \rightarrow \sigma\Harpoon{[n-m+1, n]}.
\end{equation*}

On the other hand, each path $\pi$ in the $m$-dimensional factor graph will be associated with the set of structures $\Sigma(\pi) =  \{\sigma \in C_{[m, \infty)} \hspace{1mm} | \hspace{1mm} \Pi(\sigma) = \pi\}$. 

In this way, each structure in $C_{[m, \infty)}$ is associated with a path in the $m$-dimensional factor graph (but structures in $C_{[1, m-1]}$ are not).  Conversely, each path in the $m$-dimensional factor graph is associated with zero, one, or several structures in $C_{[m, \infty)}$.  Paths which are associated with several structures will play an important role in relating $C$ to the poset of paths in $\Gamma_C^m$, informing the next definition.

\begin{defn}
A path $\pi$ in $\Gamma_C^m$ is \emph{ambiguous} if $|\Sigma(\pi)| > 1$.
\end{defn}

The following proposition follows directly from the definitions.

\begin{prop}
\label{prop substructures and subpaths}
If $m \geq b$ and $\sigma, \rho \in C_{[m, \infty)}$, then $\sigma \leq \rho$ implies that $\Pi(\sigma) \leq \Pi(\rho)$ in $\Gamma_C^m$.\qed
\end{prop}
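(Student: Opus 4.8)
The plan is to unpack both definitions and show that a substructure embedding directly induces a subpath relationship between the associated paths. Suppose $m \geq b$ and $\sigma, \rho \in C_{[m, \infty)}$ with $\sigma \leq \rho$. Write $|\sigma| = n$ and $|\rho| = N$. By definition of the consecutive order, there is a contiguous map $f$ witnessing the embedding of $\sigma$ into $\rho$; since contiguous maps send the initial segment of one linear order to a consecutive block of the other, there is some index $k \in [1, N - n + 1]$ such that $\sigma \cong \rho\Harpoon{[k, k+n-1]}$ (identifying the underlying sets with initial segments of $\mathbb{N}$ as in the preliminaries).

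First I would record the key ``restriction commutes with windowing'' observation: for any $i$ with $1 \leq i \leq n - m + 1$, the $i$-th window of $\sigma$, namely $\sigma\Harpoon{[i, i+m-1]}$, is isomorphic to $\rho\Harpoon{[k+i-1, k+i+m-2]}$, the $(k+i-1)$-th window of $\rho$. This is immediate because restriction is transitive: $\sigma\Harpoon{[i,i+m-1]} \cong (\rho\Harpoon{[k,k+n-1]})\Harpoon{[k+i-1,k+i+m-2]} = \rho\Harpoon{[k+i-1,k+i+m-2]}$, using that $[k+i-1, k+i+m-2] \subseteq [k, k+n-1]$ since $i + m - 1 \leq n$.

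Next I would compare the two paths. By definition, $\Pi(\sigma) = \sigma\Harpoon{[1,m]} \rightarrow \sigma\Harpoon{[2,m+1]} \rightarrow \dots \rightarrow \sigma\Harpoon{[n-m+1,n]}$ and $\Pi(\rho) = \rho\Harpoon{[1,m]} \rightarrow \dots \rightarrow \rho\Harpoon{[N-m+1,N]}$. By the observation above, the $i$-th vertex of $\Pi(\sigma)$ equals (as an isomorphism class, hence as a vertex of $\Gamma_C^m$) the $(k+i-1)$-th vertex of $\Pi(\rho)$, for all $i \in [1, n-m+1]$. Since consecutive vertices in $\Pi(\sigma)$ correspond to consecutive vertices in $\Pi(\rho)$ under the shift $i \mapsto k+i-1$, the edges of $\Pi(\sigma)$ are exactly the edges $(k+i-1) \rightarrow (k+i)$ of $\Pi(\rho)$ for $i \in [1, n-m]$, which is precisely the subpath of $\Pi(\rho)$ running from its $k$-th vertex to its $(k+n-m)$-th vertex. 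Hence $\Pi(\sigma) \leq \Pi(\rho)$ in $\Gamma_C^m$.

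There is no real obstacle here — the statement is essentially bookkeeping — but the one point demanding care is the boundary arithmetic: one must check that all the index intervals $[k+i-1, k+i+m-2]$ genuinely lie inside $[k, k+n-1]$ (which uses $m \leq n$, itself guaranteed by $\sigma \in C_{[m,\infty)}$), and that the edge-condition $u\Harpoon{[2,m]} \cong v\Harpoon{[1,m-1]}$ for adjacent windows is preserved when we pass from windows of $\sigma$ to the corresponding windows of $\rho$ — but this is automatic since both windows of $\sigma$ are identified with windows of $\rho$ via the same restriction, so the overlap condition transfers verbatim.
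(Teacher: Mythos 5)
Your proof is correct and matches the paper's treatment: the paper simply declares this proposition to follow directly from the definitions (giving no written proof), and your argument is exactly the straightforward unpacking --- identifying $\sigma$ with a window $\rho\Harpoon{[k,\,k+n-1]}$ of $\rho$ and observing that each $m$-window of $\sigma$ is then the corresponding $m$-window of $\rho$, so $\Pi(\sigma)$ occurs as a contiguous block of $\Pi(\rho)$. The index bookkeeping you carry out is accurate, so nothing further is needed.
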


\section{Valid structures}
\label{sec valid}

We will now introduce \emph{valid} types of structures, for which it turns out that every path in a factor graph is associated with at least one structure.  Intuitively, for valid types of structures we can combine two overlapping structures to create another structure of the same type. 


\begin{defn}
Let $\sigma, \rho$ be structures of the same type such that $|\sigma|=p$ and $|\rho|=q$.  We say that $\sigma$ and $\rho$ \emph{overlap} if $\sigma\Harpoon{[p-x+1, p]} \cong \rho \Harpoon{[1, x]}$ for some $x \geq 1$.  

Suppose that $\sigma$ and $\rho$ overlap on $x$ points.  Let $\theta$ be a structure of the same type such that $|\theta|=p+q-x$.  We say that $\theta$ \emph{combines} $\sigma$ and $\rho$ if $\theta\Harpoon{[1, p]} \cong \sigma$ and $\theta\Harpoon{[p-x+1, q+p-x]} \cong \rho$.
\end{defn}

\begin{defn}
A type of structure is \emph{valid} if for any two overlapping structures of this type there exists a structure $\theta$ of the same type which combines them.  If a type of structure is not valid, we will say that it is \emph{invalid}. 
\end{defn}

\Rev{We will always be working with posets whose underlying sets consist of structures of a single type.  As such, if this type of structure is valid, we will abuse the terminology slightly by referring to these structures as \emph{valid structures}.}

It is interesting to note that the property of being valid is similar to the amalgamation property (e.g. see \cite{amalgamation}), though validity is a weaker property, as we are only required to be able to combine structures $\sigma, \rho$ that overlap on the `last' points of $\sigma$ and the `first' points of $\rho$, according to the underlying linear order.  

\Rev{Indeed, we would like to thank one of the anonymous reviewers for highlighting the following close relationship between validity and the strong amalgamation property.  Denote by $T$ a type of structure in which there is no interaction between the distinguished linear order and other relations in the signature, and denote by $T^-$ the type of structure obtained by removing the distinguished linear order from $T$.  Any linear ordering of the underlying set of a structure of type $T^-$ produces a structure of type $T$.  We will briefly show that $T$ is valid if and only if $T^-$ satisfies the strong amalgamation property.  

First suppose that $T$ is a valid type of structure.  Take $\sigma$ to embed in $\rho$ and $\tau$ as an amalgam of structures of type $T^-$.  We can add linear orders to $\sigma, \rho$ and $\tau$ so that $\sigma$ embeds in the last $|\sigma|$ elements of $\rho$ and the first $|\sigma|$ elements of $\tau$.  The resulting structures are of type $T$
and so $\rho$ and $\tau$ can be $|\sigma|$-combined by the assumption of validity. Removing the linear order from the resulting structure yields a structure of type $T^-$ which witnesses that $T^-$ satisfies the strong amalgamation property.

For the reverse direction, suppose that $T^-$ satisfies the strong amalgamation property and take structures $\sigma, \rho$ of type $T$ which $x$-overlap for some $x$.  Take $\theta$ to be the restriction of $\sigma$ (and $\rho$) to the points in this overlap.  Let $\sigma^-, \rho^-, \theta^-$ be the structures of type $T^-$ obtained by removing the distinguished linear orders from $\sigma, \rho$ and $\theta$.  Note that $\theta^-$ embeds in $\sigma^-$ and $\rho^-$.  Since $T^-$ satisfies the strong amalgamation property, there is a structure $\tau^-$ such that both $\sigma^-$ and $\rho^-$ embed in $\tau^-$, the images of the copies of $\theta^-$ in $\sigma^-$ and $\rho^-$ coincide, and the intersection of the copies of $\sigma^-$ and $\rho^-$ in $\tau^-$ is precisely this copy of $\theta^-$.  We can add a linear order to $\tau^-$ in such a way as to produce a structure $\tau$ of type $T$ which $x$-combines $\sigma$ and $\rho$, yielding validity of $T$.}\clearpage

\begin{lemma}
\label{lemma valid egs}
The following types of structures are all valid: \\
\\
\begin{tabular}{c c}
\hspace{-2.0cm}(1) Graphs; & \hspace{-1.2cm}(6) Permutations;\\
\hspace{-1.1cm}(2) Simple graphs; & \hspace{0.0cm}(7) Equivalence relations;\\
\hspace{-1.8cm}(3) Digraphs; &  \hspace{-1.1cm}(8) Linear orders;\\
\hspace{-1.3cm}(4) Tournaments; & \hspace{0.8cm} (9) Words over a finite alphabet;\\
\hspace{-0.3cm}(5) \hspace{-0.1cm} Relational structures & \hspace{-2cm}(10) Posets.\\
with signature $\sigma$;&\\
\end{tabular}
\end{lemma}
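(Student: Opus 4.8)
The plan is to treat all ten families through one construction, differing only in how much latitude it leaves. Fix overlapping structures $\sigma$ of length $p$ and $\rho$ of length $q$, overlapping on $x$ points, so that $\sigma\Harpoon{[p-x+1,p]}\cong\rho\Harpoon{[1,x]}$; being an isomorphism of structures in our sense it preserves the obligatory linear order, hence is the unique order bijection between those two $x$-element chains. Consequently, placing $\sigma$ on $[1,p]$ and the copy of $\rho$ obtained by shifting its underlying set by $p-x$ onto $[p-x+1,p+q-x]$, the two pieces agree relation-by-relation on the common ``spine'' $[p-x+1,p]$. The candidate $\theta$ then has underlying set $[1,p+q-x]$, obligatory order the natural one (which restricts correctly to both pieces), and every tuple all of whose coordinates lie in $[1,p]$, or all in $[p-x+1,p+q-x]$, is forced by $\sigma$ or by $\rho$ (consistently, by spine agreement). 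The only undetermined data are the \emph{free} tuples, those having at least one coordinate in the $\sigma$-exclusive block $[1,p-x]$ and at least one in the $\rho$-exclusive block $[p+1,p+q-x]$. I will show that in each case the free tuples can be filled in so that $\theta$ is of the right type with $\theta\Harpoon{[1,p]}\cong\sigma$ and $\theta\Harpoon{[p-x+1,p+q-x]}\cong\rho$, which is exactly ``$\theta$ combines $\sigma$ and $\rho$''.

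For graphs, simple graphs, digraphs, tournaments, relational structures with a prescribed signature, and words, the defining conditions impose nothing on the free tuples, so one simply declares them absent (for tournaments, orient every free pair from the smaller to the larger vertex). For words there are in fact no free tuples, all relations being unary, and $\theta$ is just $\sigma$ followed by $\rho$ past the overlap. In each of these cases the restriction of $\theta$ to $[1,p]$ uses only tuples inherited from $\sigma$, and similarly for $\rho$, so the required restrictions hold on the nose; no real work is needed.

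Equivalence relations and posets need more care, since leaving the free pairs unrelated may break transitivity. For these I would instead let the non-obligatory relation of $\theta$ be the transitive closure of the union of the corresponding relations of $\sigma$ and $\rho$ (on their respective blocks). The key point — a \emph{run-collapsing} observation — is that a chain (equivalence case) or a directed walk or cycle (poset case) in this union meets the $\rho$-exclusive block only in maximal runs whose two neighbours lie on the spine, and such a run may be replaced by a single $\leq_\rho$-step between those spine vertices; since $\sigma$ and $\rho$ coincide on the spine, that step is also a $\leq_\sigma$-step. Iterating this, and symmetrically for $\sigma$-exclusive runs, shows that any $\theta$-relation between two points of $[1,p]$ already holds in $\sigma$ (so $\theta\Harpoon{[1,p]}\cong\sigma$) and likewise for $\rho$; and in the poset case the same collapsing converts a hypothetical nontrivial $\theta$-cycle into a nontrivial cycle inside $\sigma$ or inside $\rho$, contradicting antisymmetry there, so the transitive closure is genuinely a partial order. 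Reflexivity, and symmetry in the equivalence case, are automatic for the transitive closure of a symmetric relation containing the diagonal.

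Permutations (and linear orders, if these carry a relation beyond the obligatory one) form the last case, where neither recipe applies because the transitive closure of the union of two linear orders need not be total. Here I would construct the second order of $\theta$ by hand: keep the spine ordered as in $\sigma$ (equivalently $\rho$); from $\leq_2^{\sigma}$, respectively $\leq_2^{\rho}$, read off into which gap between consecutive spine points each $\sigma$-exclusive, respectively $\rho$-exclusive, point falls; and within each gap list the $\sigma$-exclusive points in $\leq_2^{\sigma}$-order, then the $\rho$-exclusive points in $\leq_2^{\rho}$-order. Deleting the $\rho$-exclusive points then recovers $\leq_2^{\sigma}$ on $[1,p]$ and deleting the $\sigma$-exclusive points recovers $\leq_2^{\rho}$, giving $\theta\Harpoon{[1,p]}\cong\sigma$ and $\theta\Harpoon{[p-x+1,p+q-x]}\cong\rho$. (If linear orders carry no extra relation, this case is trivial.) Of all of this, the one step I expect to be a genuine obstacle rather than bookkeeping is checking antisymmetry of the transitive closure in the poset case; everything else follows once the spine agreement supplied by the overlap condition is in place.
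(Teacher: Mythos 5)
Your proposal is correct and follows essentially the same route as the paper: glue the two structures along the overlap, leave the ``free'' tuples empty for graphs, digraphs, words and general relational structures, orient free pairs for tournaments, interleave the second linear order for permutations, and close up under transitivity for equivalence relations and posets. The only cosmetic difference is in the poset case, where you take the transitive closure of the union and verify antisymmetry by run-collapsing, whereas the paper writes down the resulting relation explicitly via four clauses (one intermediate spine witness suffices, which is exactly what your collapsing shows) and checks the axioms directly; the two relations coincide and the crux of antisymmetry --- producing two spine witnesses $c,d$ and using spine agreement to force $c=d$ --- is the same in both.
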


\begin{proof}
First consider graphs. Let $G$ and $H$ be two graphs on $p, q$ points respectively, and suppose $G\Harpoon{[p-x+1,\hspace{0.7mm} p]} \cong H \Harpoon{[1, x]}$ for some $x \geq 1$.  \Rev{Identifying the last $x$ points of $G$ with the first $x$ points of $H$ uniquely determines the relative values of vertices with respect to the underlying linear order that governs consecutivity.  Doing this, we obtain another graph, so graphs are valid structures.}

By replacing graphs with simple graphs or digraphs in the last paragraph, we see that each of these are also valid structures. 

For tournaments: Take $S$ and $T$ to be two tournaments on $p$ and $q$ points respectively such that $S\Harpoon{[p-x+1,\hspace{0.7mm} p]} \cong T \Harpoon{[1, x]}$ for some $x \geq 1$.  As for graphs, we can identify the last $x$ points of $S$ with the first $x$ points of $T$.  We also need to add a directed edge between each pair of points which are not already neighbours.  The resulting tournament satisfies the conditions for validity.

For relational structures: Let $B=(X, B_1, \dots, B_k), C=(Y, C_1, \dots, C_k)$ be relational structures with signature $\sigma = (n_1, \dots, n_k)$ on $p, q$ points respectively, such that $B\Harpoon{[p-x+1, p]} \cong C \Harpoon{[1, x]}$ for some $x \geq 1$.  Once again, we identify the last $x$ points of $B$ with the first $x$ points of $C$, so the last $x$ points each $B_j$ are identified with the first $x$ points of $C_j$.  The resulting structure $D$ is a relational structure with signature $\sigma$ such that $D\Harpoon{[1, p]} \cong B$ and $D\Harpoon{[p-x+1, \hspace{0.7mm} p+q-x]} \cong C$, so relational structures with the same signature are valid structures.

For permutations: Let $\sigma$ and $\rho$ be two permutations on $p$ and $q$ points respectively such that, as usual, $\sigma \Harpoon{[p-x+1,\hspace{0.7mm} p]} \cong \rho \Harpoon{[1, x]}$ for some $x \geq 1$.  Again, we identify the last $x$ points of $\sigma$ with the first $x$ points of $\rho$.  Doing this uniquely determines the relative values of points with respect to the underlying linear order that governs consecutivity.  It may not uniquely determine the relative positions of points with respect to the other linear order, but by taking any relative positions we obtain a permutation.

For equivalence relations: the process of identifying the overlapping points of two equivalence relations results in identifying the equivalence classes of these points.  This produces another equivalence relation, giving validity.

For linear orders: since of course linear orders contain a linear order naturally in their signatures, we do not need to add an additional linear order.  Hence the only linear orders are of the form $1 \leq 2 \leq \dots \leq n$.  Let $l_1 = 1 \leq 2 \leq \dots \leq n$ and $l_2 = 1 \leq 2 \leq \dots \leq m$ be two such linear orders, with $n \leq m$.  \Rev{It can be seen that $l_1$ and $l_2$ overlap on $x$ points for any $x \leq n$, and that these are the only possible overlaps.  In any of these cases, by identifying the overlapping points we obtain another linear order.}  Hence linear orders are valid.

For words: it is clear that identifying the suffix of one word with the (identical) prefix of another yields a new word.

For posets: Let $\textbf{P}=(P, \leq_p)$ and $\textbf{Q}=(Q, \leq_q)$ be posets on $p, q$ points respectively, such that $\textbf{P}\Harpoon{[p-x+1, \hspace{0.7mm}p]} \cong \textbf{Q} \Harpoon{[1, x]}$ for some $x \geq 1$.  We assume without loss of generality that $P\cap Q$ consists of precisely the $x$ overlapping points.  We will combine and extend $\leq_p$ and $\leq_q$ to form an ordering $\leq_{pq}$ on $P \cup Q$.  We will say that $a \leq_{pq} b$ if and only if one of the following holds: 
\begin{enumerate}
    \item $a, b \in P$ and $a \leq_p b$;
    \item $a, b \in Q$ and $a\leq_q b$;
    \item $a \in P \backslash Q$ and $b \in Q \backslash P$ and there exists $c\in P \cap Q$ such that $a \leq_p c \leq_q b$;
    \item $a \in Q \backslash P$ and $b \in P \backslash Q$ and there exists $c\in P \cap Q$ such that $a \leq_q c \leq_p b$.
\end{enumerate}


We will show that $(P \cup Q, \leq_{pq})$ is a poset.  It is clear that reflexivity is inherited from $\textbf{P}$ and $\textbf{Q}$.  

For transitivity, let $a, b, c \in P \cup Q$ such that $a \leq_{pq} b$ and $b \leq_{pq} c$.  If $a, b, c \in P$ (or Q), then transitivity is inherited from $\textbf{P}$ (or $\textbf{Q}$).  So now consider the case where $a, b \in P \backslash Q$ and $c \in Q \backslash P$.  Here, $a \leq_p b$ and there exists $d \in P \cap Q$ such that $b \leq_p d \leq_q c$.  Therefore, $ a\leq_p b \leq_p d \leq_q c$, meaning that $a \leq_{pq} c$.  Next, consider the case where $a, c \in P \backslash Q$ and $b \in Q \backslash P$.  Since $a \leq_{pq} b$, there exists $d \in P \cap Q$ such that $a \leq_p d \leq_q b$.  Similarly, there exists $e \in P \cap Q$ such that $b \leq_q e \leq_p c$.  We observe that $d \leq_q b \leq_q e$, so $d \leq_q e$ by transitivity of $\leq_q$.  Since $\textbf{P}$ and $\textbf{Q}$ are identical on the points of $P \cap Q$, it follows that $d \leq_p e$ as well.  Therefore, $a \leq_p d \leq_p e \leq_p c$, and so $a \leq_p c$ by transitivity of $\textbf{P}$.  It follows that $a \leq_{pq} c$.  \Rev{The other cases are proven similarly.}

Finally, we will prove that $\leq_{pq}$ is antisymmetric.  Suppose that $a, b \in P \cup Q$ such that $a \leq_{pq} b$ and $b \leq_{pq} a$.  If $a, b \in P$ (or $Q$), then $a=b$ by antisymmetry of $\leq_p$ (or $\leq_q$).  Otherwise, suppose $a \in P\backslash Q$ and $b \in Q\backslash P$.  There exist $c, d \in P \cap Q$ such that $a \leq_p c \leq_q b$ and $b \leq_q d \leq_p a$.  Therefore, $d \leq_p a \leq_p c$, meaning that $d \leq_p c$.  Similarly, $ c\leq_q b \leq_q d$, so $c \leq_q d$.  Since $c, d \in P \cap Q$, their relationships in $\textbf{P}$ and $\textbf{Q}$ are identical by assumption.  Therefore, $c = d$. It follows that $d=c=b=a$, completing the proof.  
\end{proof}

Recall that paths in factor graphs are associated with zero, one or several structures.  Valid structures are significant because in their factor graphs every path is associated with a non-empty set of structures, as proved in the next lemma.

\begin{lemma}
\label{lemma validity 2}
Let $(X, \leq)$ be a poset of valid structures under the consecutive order and $C=\Av(B)$ be an avoidance set of $(X, \leq)$.  Then for any $m \geq b$, any path $\pi$ in $\Gamma_C^m$ satisfies $|\Sigma(\pi)| \geq 1$. 
\end{lemma}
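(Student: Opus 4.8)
The plan is to induct on the length (number of edges) of the path $\pi$ in $\Gamma_C^m$, extending a realising structure by one point at each step using validity. For the base case, a path of length $0$ is a single vertex $u$, which by definition of the factor graph is a structure in $C_m$. Since $|u| = m$, we have $u \in C_{[m, \infty)}$ and $\Pi(u) = u$, so $u \in \Sigma(\pi)$ and $|\Sigma(\pi)| \geq 1$.

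For the inductive step, write $\pi = v_1 \rightarrow \dots \rightarrow v_k \rightarrow v_{k+1}$ with $k \geq 1$ and set $\pi' = v_1 \rightarrow \dots \rightarrow v_k$. By the inductive hypothesis there is some $\sigma \in \Sigma(\pi')$; put $n = |\sigma|$, so that $\Pi(\sigma) = \pi'$ and in particular $\sigma\Harpoon{[n-m+1,\, n]} \cong v_k$. The edge $v_k \rightarrow v_{k+1}$ means $v_k\Harpoon{[2,\, m]} \cong v_{k+1}\Harpoon{[1,\, m-1]}$, and restricting and chaining with the isomorphism $\sigma\Harpoon{[n-m+1,\, n]} \cong v_k$ gives $\sigma\Harpoon{[n-m+2,\, n]} \cong v_{k+1}\Harpoon{[1,\, m-1]}$; that is, $\sigma$ and $v_{k+1}$ overlap on $m-1$ points. (If $m = 1$ this overlap is empty and a short separate argument is needed; I would simply assume $m \geq 2$ henceforth.) Validity then produces a structure $\theta$ with $|\theta| = n+1$ combining $\sigma$ and $v_{k+1}$, so $\theta\Harpoon{[1,\, n]} \cong \sigma$ and $\theta\Harpoon{[n-m+2,\, n+1]} \cong v_{k+1}$.

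The crux of the proof, and the step I expect to be the main obstacle, is checking that $\theta$ still avoids $B$, so that $\theta \in C$; this is exactly where the hypothesis $m \geq b$ is used. Suppose some $\beta \in B$ embedded consecutively into $\theta$, with image on a window $[j,\, j + |\beta| - 1]$. As $|\beta| \leq b \leq m$, this window has length at most $m$, and since $\theta$ has length $n+1$ it must lie entirely inside $[1,\, n]$ or entirely inside $[n-m+2,\, n+1]$. In the first case $\beta$ embeds consecutively into $\theta\Harpoon{[1,\, n]} \cong \sigma$, contradicting $\sigma \in C$; in the second it embeds into $\theta\Harpoon{[n-m+2,\, n+1]} \cong v_{k+1}$, contradicting $v_{k+1} \in C_m$. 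Hence $\theta \in C$, and since $|\theta| = n+1 \geq m$ it lies in $C_{[m, \infty)}$.

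It remains only to check $\Pi(\theta) = \pi$, which is routine: for $i \leq n-m+1$ the $i$-th length-$m$ window of $\theta$ coincides with that of $\sigma = \theta\Harpoon{[1,\, n]}$, namely $v_i$, while the final window is $\theta\Harpoon{[n-m+2,\, n+1]} \cong v_{k+1}$ (here $k = n-m+1$). Therefore $\theta \in \Sigma(\pi)$, completing the induction and the proof.
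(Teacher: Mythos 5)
Your proof is correct and follows essentially the same route as the paper's: induction on the length of $\pi$, using validity at each step to glue a structure realising the shorter path onto a structure realising the final piece. The only differences are cosmetic: the paper combines with a structure realising the last \emph{edge} (so the overlap has $m$ points, which quietly sidesteps the degenerate $m=1$ case you flag), whereas you combine with the last vertex itself (overlap $m-1$); and your explicit window argument that $\theta$ avoids $B$ is actually more careful than the paper's, which simply asserts that $\theta$ contains no forbidden substructures.
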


\begin{proof}
Consider an avoidance set $C=\Av(B)$ of $(X, \leq)$ and let $m \geq b$.  Take a path $\pi$ in $\Gamma_C^m$.   We will use induction on the length of $\pi$.  Suppose the length of $\pi$ is 0, say $\pi = v_0$.  Since $v_0$ is a vertex, it is an element of $C$ by definition, so $v_0 \in \Sigma(\pi)$ and $|\Sigma(\pi)| \geq 1$.  Now suppose the length of $\pi$ is 1, say $\pi = v_0 \rightarrow v_1$.  This means that $v_0 \Harpoon{[2, \hspace{0.7mm}\Rev{m}]} \cong v_1 \Harpoon{[1, \hspace{0.7mm} \Rev{m}-1]}$, and so by the definition of validity there exists a structure $v$ on $\Rev{m}+1$ points such that $v \Harpoon{[1, \hspace{0.7mm}\Rev{m}]} \cong v_0$ and $v \Harpoon{[2,\hspace{0.7mm} \Rev{m}+1]} \cong v_1$.  This structure $v$ has associated path $\pi$, meaning that $|\Sigma(\pi)| \geq 1$. \Rev{To see that $v \in C$, start by assuming for a contradiction that $\sigma \leq v$ for some $\sigma \in B$.  Since $\sigma \nleq v_0, v_1$, it follows that the embedding of $\sigma$ in $v$ must include both the first point of $v_0$ and the last point of $v_1$.  Therefore, $\sigma=v$, since the embedding is contiguous.  However, $|\sigma|=|v|=b+1>b$, so $\sigma \notin B$, a contradiction.}

Now suppose all paths of length $<k$ in $\Gamma_C^m$ have $\geq 1$ associated structure.  Let $\pi = v_0 \rightarrow \dots \rightarrow v_k$ be a path of length $k$ in $\Gamma_C^m$.  By assumption the paths $v_0 \rightarrow \dots \rightarrow v_{k-1}$ and $v_{k-1} \rightarrow v_k$ each have at least one associated structure; call these $\rho$ and $\sigma$ respectively.  Now $\rho$ and $\sigma$ overlap on the $m$ points of $v_{\Rev{k-1}}$, and so since the structures of this poset are valid there exists a structure $\theta$ which combines $\sigma$ and $\rho$.  \Rev{It is clear that $\Pi(\theta) = \pi$, so $|\Sigma(\pi)| \geq 1$.  That $\theta\in C$ can be shown using the reasoning given for the base case.}
\end{proof}

The following example demonstrates a very natural instance of a poset which is not valid: the poset $\mathcal{F}$ of forests under the consecutive order.  

\begin{ex}
\label{ex factor graphs don't work}
\Rev{We will show that forests are not a valid type of structures.  Consider the following two forests:

\begin{center}
\begin{tikzpicture}[> =  {Stealth [scale=1.3]}, thick]
\tikzstyle{everystate} = [thick]
\node [state, shape = ellipse, minimum size = 10pt] (1) at (0,0) {1};
\node [state, shape = ellipse, minimum size = 10pt] (3) at (2, 0) {3};
\node [state, shape = ellipse, minimum size = 10pt] (2) at (1, 1.5) {2};
\node (A) at (-1, 0.75) {$G \h =$};
\path[-]
(1) edge node {} (2)
(1) edge node {} (3);

\node [state, shape = ellipse, minimum size = 10pt] (1a) at (5.5,0) {1};
\node [state, shape = ellipse, minimum size = 10pt] (3a) at (7.5, 0) {3};
\node [state, shape = ellipse, minimum size = 10pt] (2a) at (6.5, 1.5) {2};
\node (B) at (4.5, 0.75) {$H \h =$};
\path[-]
(3a) edge node {} (2a)
(1a) edge node {} (3a);
\end{tikzpicture}
\end{center}

It can be seen that $G \Harpoon{[2, 3]} \cong H \Harpoon{[1, 2]}$.  However, any graph which combines $G$ and $H$ contains the cycle shown in Figure \ref{fig cycle} as a subgraph, and so is certainly not a forest.  Hence forests are not valid structures.

We will now consider how this affects factor graphs.  Consider the factor graph in Figure \ref{fig dodgy factor graph}, which is the factor graph of the avoidance set $C$ of $\mathcal{F}$ which avoids all forests on three points except for $G$ and $H$. One graph associated with the path $\pi = G \rightarrow H$ is that shown in Figure \ref{fig cycle}.  Since this is a cycle, it is not a forest, and therefore not in $C$.  Every graph associated with $\pi$ contains this cycle as a subgraph, so $\pi$ has no associated forests.  This means that $\Sigma(\pi) = \varnothing$ and so we can also see that $\mathcal{F}$ is not valid using the contrapositive of Lemma \ref{lemma validity 2}.}
\end{ex}

\begin{figure}
\begin{center}
\begin{tikzpicture}[> =  {Stealth [scale=1.3]}, thick]
\tikzstyle{everystate} = [thick]
\node [state, shape = ellipse, minimum size = 10pt] (1) at (0,0) {1};
\node [state, shape = ellipse, minimum size = 10pt] (3) at (2, 0) {3};
\node [state, shape = ellipse, minimum size = 10pt] (2) at (1, 1.5) {2};
\node [state, shape = ellipse, minimum size = 10pt] (1a) at (5,0) {1};
\node [state, shape = ellipse, minimum size = 10pt] (3a) at (7, 0) {3};
\node [state, shape = ellipse, minimum size = 10pt] (2a) at (6, 1.5) {2};

\path[-]
(1) edge node {} (2)
(1) edge node {} (3)
(1a) edge node {} (3a)
(2a) edge node {} (3a)
;

\draw [-{Stealth [scale=1.4]}, color=black] (3,1)--(4,1);
\draw [-{Stealth [scale=1.4]}, color=black] (4,0.2)--(3,0.2);

\draw (1,0.6) circle (2cm);
\draw (6,0.6) circle (2cm);
\end{tikzpicture}
\end{center}
\caption{The factor graph from Example \ref{ex factor graphs don't work} whose paths can create cycles.}
\label{fig dodgy factor graph}
\end{figure}
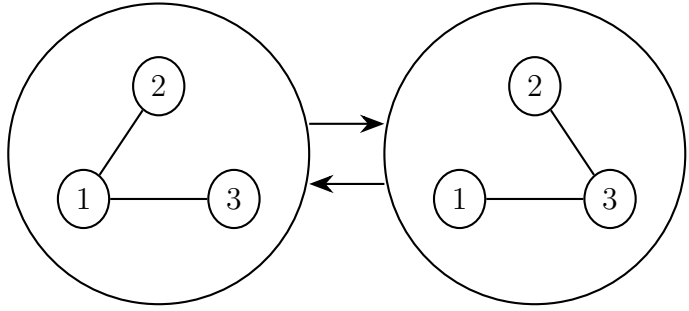

\begin{figure}
\begin{center}
\begin{tikzpicture}[> =  {Stealth [scale=1.3]}, thick]
\tikzstyle{everystate} = [thick]
\node [state, shape = ellipse, minimum size = 10pt] (1) at (0,0) {1};
\node [state, shape = ellipse, minimum size = 10pt] (3) at (2, 0) {3};
\node [state, shape = ellipse, minimum size = 10pt] (2) at (1, 1.5) {2};
\node [state, shape = ellipse, minimum size = 10pt] (4) at (3, 1.5) {4};

\path[-]
(1) edge node {} (2)
(1) edge node {} (3)
(2) edge node {} (4)
(3) edge node {} (4)
;
\end{tikzpicture}
\end{center}
\caption{A graph corresponding to the path $\pi$ in Example \ref{ex factor graphs don't work}}
\label{fig cycle}
\end{figure}
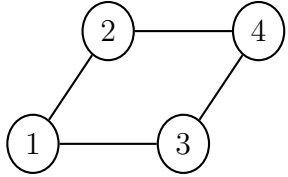

\begin{rem}
    We note that structures which can be defined as downward closed subsets of valid structures are not necessarily valid themselves.  \Rev{One example of this is given by forests: even though they are a downward closed subset of a valid type of structure (namely, graphs), we have seen in Example \ref{ex factor graphs don't work} that they are not valid.}  \Rev{Another} example is given by overlap free words.  A word over an alphabet $A$ is \emph{overlap free} if it does not contain any subword of the form $xyxyx$ for \Rev{$x \in A, y \in A \cup \{\epsilon\}$.  Note that this definition is different to the one commonly used in the literature (eg. \cite{overlapref}), where $y$ can be a word.}  To see that overlap free words are not valid structures, consider the set of overlap free words over $A=\{a, b\}$. Take $u = aba$, $v = baba$; clearly $u, v \in A^*$. Moreover, $u \Harpoon{[2,3]}= ba = v \Harpoon{[1,2]}$. However, identifying these two letters yields the word $ababa$, which is not overlap free.

In this case we can use the methodology for words under the consecutive order to answer the wqo and atomicity problems for overlap free words under the consecutive order.  Let $\mc{OA}_A$ be the poset of overlap free words over an alphabet A under the consecutive order. We consider an avoidance set $C = \Av(B)$ of $\mc{OA}_A$.  Take $Y = \{xyxyx \hspace{2mm}| \hspace{2mm} x \in A, y \in \Rev{A \cup \{\epsilon\}}\}$. It can be seen that $C$ is equal to the avoidance set $\Av(B \cup Y )$ of $\mc{W}_A$.  Hence wqo and atomicity for $C$ are governed by wqo and atomicity respectively for this avoidance set of $\mc{W}_A$.
\end{rem}

In the final sections of this paper we will exhibit a further example of a structure type which is not valid -- permutations consisting of at most two ascending sequences -- and solve the wqo and atomicity problems for these structures by adapting our techniques for valid structures.

We will finish this section with a couple of useful observations about posets of valid structures, beginning with the fact that in these posets the absence of ambiguous paths yields the converse of Proposition \ref{prop substructures and subpaths}.

\begin{lemma}
\label{lemma 1-1}
Let $C=\Av(B)$ be an avoidance set of a poset of valid structures, $m \geq b$ and $\sigma, \rho \in C_{[m, \infty)}$.  If $\Gamma_C^m$ contains no ambiguous paths, then $\sigma \leq \rho$ if and only if $\Pi(\sigma) \leq \Pi(\rho)$.
\end{lemma}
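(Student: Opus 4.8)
One direction, $\sigma \leq \rho \Rightarrow \Pi(\sigma) \leq \Pi(\rho)$, is exactly Proposition \ref{prop substructures and subpaths} and requires no hypothesis on ambiguity, so the plan is to put all the work into the converse: assuming $\Pi(\sigma) \leq \Pi(\rho)$, deduce $\sigma \leq \rho$. Write $|\sigma| = n$ and $|\rho| = N$, both at least $m$. First I would unpack what ``$\Pi(\sigma)$ is a subpath of $\Pi(\rho)$'' means at the level of vertices: $\Pi(\rho)$ is the vertex sequence $\rho\Harpoon{[k, k+m-1]}$ for $k = 1, \dots, N-m+1$, and $\Pi(\sigma)$ the sequence $\sigma\Harpoon{[k, k+m-1]}$ for $k = 1, \dots, n-m+1$, so a subpath inclusion amounts to an offset $t \geq 0$ with $t + n \leq N$ such that $\sigma\Harpoon{[k, k+m-1]} \cong \rho\Harpoon{[t+k, t+k+m-1]}$ for every $k \in [1, n-m+1]$.

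The key move is then to look at the consecutive substructure $\tau := \rho\Harpoon{[t+1, t+n]}$ of $\rho$. Since $C = \Av(B)$ is downward closed and $\tau \leq \rho \in C$, we have $\tau \in C$, and $|\tau| = n \geq m$, so $\tau \in C_{[m,\infty)}$ and $\Pi(\tau)$ is defined. Its $k$-th vertex is $\tau\Harpoon{[k, k+m-1]} = \rho\Harpoon{[t+k, t+k+m-1]}$, which by the previous step is the same structure (isomorphic, hence equal) as the $k$-th vertex of $\Pi(\sigma)$; the two paths have the same number of vertices, so $\Pi(\tau) = \Pi(\sigma) =: \pi$. Now $\sigma, \tau \in \Sigma(\pi)$, so $|\Sigma(\pi)| \geq 1$, while the hypothesis that $\Gamma_C^m$ has no ambiguous paths gives $|\Sigma(\pi)| \leq 1$; hence $\sigma = \tau = \rho\Harpoon{[t+1, t+n]}$, and the contiguous map $k \mapsto t + k$ of $[1, n]$ into $[1, N]$ witnesses $\sigma \leq \rho$ under the consecutive order.

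There is no substantial obstacle: the proof is index bookkeeping wrapped around a single idea, namely that a subpath of $\Pi(\rho)$ is literally the $\Pi$-image of a consecutive substructure of $\rho$. The two points I would be careful to justify are (i) that $\tau$ genuinely lies in $C_{[m,\infty)}$ — downward closure of $C$ together with $n \geq m$ — so that $\Pi(\tau)$ and $\Sigma(\Pi(\tau))$ are meaningful, and (ii) that passing from ``$\sigma$ and $\tau$ have the same associated path'' to ``$\sigma = \tau$'' is precisely the content of there being no ambiguous paths. It is worth noting that validity of the structure type plays no role in this particular argument (nor does Lemma \ref{lemma validity 2}); it is simply the running assumption of the section, kept here for uniformity of statement.
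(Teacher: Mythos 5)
Your proof is correct and takes essentially the same route as the paper's: the forward direction is Proposition \ref{prop substructures and subpaths}, and the converse identifies the relevant subpath of $\Pi(\rho)$ as the $\Pi$-image of a consecutive restriction $\tau$ of $\rho$, then uses the absence of ambiguous paths to force $\sigma = \tau$. Your write-up is more explicit about the index bookkeeping (and you are right that validity plays no role here), but the underlying idea is identical to the paper's.
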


\begin{proof}
$(\Rightarrow)$ This is Proposition \ref{prop substructures and subpaths}.
$(\Leftarrow)$ If $\Pi(\sigma) \leq \Pi(\rho)$, then since there are no ambiguous paths, $\Pi(\sigma)$ is only associated with $\sigma$.  Similarly, the subpath of $\Pi(\sigma)$ corresponding to $\Pi(\rho)$ is only associated with $\rho$.  Therefore, since we can extend $\Pi(\rho)$ to $\Pi(\sigma)$, it must be the case that $\rho$ is a substructure of $\sigma$.
\end{proof}

\begin{prop}
\label{prop paths}
Let $C$ be an avoidance set of a poset of valid structures.  If $\pi=\eta\xi$ in $\Gamma_C$, then for any $\rho \in \Sigma(\eta)$ and $\nu \in \Sigma(\xi)$ there exists $\sigma \in \Sigma(\pi)$ such that $\sigma \Harpoon{[1, \hspace{0.7mm}|\rho|]} \cong \rho$ and $\sigma \Harpoon{[|\rho|-b+1, \hspace{0.7mm}|\rho|+|\nu|-b]} \cong \nu$, so $\rho, \nu \leq \sigma$.
\end{prop}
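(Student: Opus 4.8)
The plan is to prove this by induction on the length of the path $\xi$, mirroring the structure of the proof of Lemma~\ref{lemma validity 2} but keeping track of the extra information that the associated structure restricts correctly to the prescribed $\rho$ at the front and $\nu$ at the back. The base case is $\xi$ of length $0$: then $\xi = \eta \xi$ shares its (single) vertex with the end of $\eta$, so $\xi$ is a single vertex $v = \eta\Harpoon{[|\eta|-b+1,|\eta|]}$ up to the indexing, $\nu$ has length $b$ and is forced to be (isomorphic to) $\rho\Harpoon{[|\rho|-b+1,|\rho|]}$, and we may simply take $\sigma = \rho$; the displayed isomorphisms hold trivially and $\rho,\nu \le \sigma$ is immediate. (One should double-check the degenerate possibility that $\eta$ itself has length $0$, where $\pi$ is a single vertex and $\rho = \nu = \sigma$.)

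For the inductive step, write $\xi = \xi' \to w$ where the final edge is $v \to w$ with $v$ the penultimate vertex. Let $\nu \in \Sigma(\xi)$. The restriction $\nu\Harpoon{[1,|\nu|-1]}$ is a structure associated to $\xi'$ (this is immediate from the definition of $\Pi$ and $\Sigma$), call it $\nu'$, and the restriction $\nu\Harpoon{[|\nu|-b+1,|\nu|]}$ is (isomorphic to) the vertex $w$. By the inductive hypothesis applied to the factorisation $\pi' := \eta\xi'$ with the structures $\rho \in \Sigma(\eta)$ and $\nu' \in \Sigma(\xi')$, there is $\sigma' \in \Sigma(\pi')$ with $\sigma'\Harpoon{[1,|\rho|]} \cong \rho$ and $\sigma'\Harpoon{[|\rho|-b+1,|\rho|+|\nu'|-b]} \cong \nu'$. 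Now $\sigma'$ has last $b$ points isomorphic to the final vertex of $\pi'$, which is $v$, and $\nu$ has first $b$ points isomorphic to $v$ as well; so $\sigma'$ and $\nu$ overlap on $b$ points. Validity supplies a structure $\sigma$ combining $\sigma'$ and $\nu$, i.e. with $\sigma\Harpoon{[1,|\sigma'|]} \cong \sigma'$ and $\sigma\Harpoon{[|\sigma'|-b+1,|\sigma'|+|\nu|-b]} \cong \nu$. One checks $\Pi(\sigma) = \Pi(\sigma')\,(v\to w) = \pi'(v\to w) = \pi$, and since $\sigma$ contains no forbidden substructure (its length-$b$ windows are exactly the vertices of $\pi$, all in $C_b$, so all avoid $B$ — this is the same argument as in Lemma~\ref{lemma validity 2}) we get $\sigma \in \Sigma(\pi)$. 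Finally $\sigma\Harpoon{[1,|\rho|]} \cong \sigma'\Harpoon{[1,|\rho|]} \cong \rho$ because $|\rho| \le |\sigma'|$, and $\sigma\Harpoon{[|\rho|-b+1,|\rho|+|\nu|-b]} \cong \nu$ follows by composing the combine-relations (the window for $\nu$ inside $\sigma$ is exactly the tail window of length $|\nu|$). Hence $\rho \le \sigma$ via the contiguous map onto $[1,|\rho|]$ and $\nu \le \sigma$ via the contiguous map onto the tail.

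I expect the main obstacle to be purely bookkeeping with the index intervals: making sure that the window where $\nu$ sits inside $\sigma$ is consistently described as $[|\rho|-b+1,\,|\rho|+|\nu|-b]$ after the two successive applications of "combine", given that $\sigma'$ has length $|\rho| + |\nu'| - b$ and $|\nu'| = |\nu| - 1$, so $|\sigma'| = |\rho| + |\nu| - b - 1$ and then $|\sigma| = |\sigma'| + |\nu| - b$ — wait, that is not right, so the correct approach is to phrase the induction not on $\xi$ but to simply note that $\Sigma(\pi) \ne \varnothing$ by Lemma~\ref{lemma validity 2} and then argue directly: pick any $\rho \in \Sigma(\eta)$, $\nu \in \Sigma(\xi)$, observe $\rho\Harpoon{[|\rho|-b+1,|\rho|]}$ and $\nu\Harpoon{[1,b]}$ are both isomorphic to the shared vertex of $\eta$ and $\xi$, hence $\rho$ and $\nu$ overlap on $b$ points, apply validity once to get $\sigma$ combining them, and then verify $\Pi(\sigma) = \Pi(\rho)\Pi(\nu) = \eta\xi = \pi$ together with $\sigma \in C$ exactly as in Lemma~\ref{lemma validity 2}. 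This one-shot version avoids the index arithmetic entirely; the displayed restriction identities and $\rho,\nu \le \sigma$ are then immediate from the definition of "combines".
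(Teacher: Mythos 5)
Your final, one-shot version is exactly the paper's proof: the shared vertex of $\eta$ and $\xi$ forces $\rho\Harpoon{[|\rho|-b+1,\,|\rho|]} \cong \nu\Harpoon{[1,\,b]}$, a single application of validity produces the combining $\sigma$, and the restriction identities together with $\sigma \in \Sigma(\pi)$ follow from the definition of ``combines''. You were right to abandon the induction (whose overlap bookkeeping was going astray); the argument you settled on is correct and matches the paper.
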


\begin{proof}
Suppose $|\rho|=p$ and $|\nu|=q$.  Since $\pi=\eta\xi$, the paths $\eta$ and $\xi$ overlap on a vertex, meaning that $\rho \Harpoon{[p-b+1, \hspace{0.7mm}p]} \cong \nu \Harpoon{[1,\hspace{0.7mm} b]}$.  By assumption, the underlying structures are valid, so by definition there exists a structure $\sigma$ on $p+q-b$ points such that $\sigma \Harpoon{[1,\hspace{0.7mm} p]} \cong \rho$ and $\sigma \Harpoon{[p-b+1,\hspace{0.7mm} p+q-b]} \cong \nu$.  It can be seen that $\sigma \in \Sigma(\pi)$ as required.
\end{proof}

\section{Bountiful structures}
\label{sec bountiful}

This section introduces some special classes of structures, for which all paths in factor graphs are ambiguous.  We will answer the well quasi-order and atomicity problems for posets of these structures under the consecutive order in the affirmative, meaning that these questions are decidable for a wide range of structures, including graphs and digraphs. \clearpage

\begin{defn}
\label{defn bountiful}
\Rev{A valid type of structure is said to be \emph{bountiful} if for any structures $\sigma, \rho$ of this type which satisfy
\begin{itemize}
    \item $|\sigma|=|\rho|=m$ for some $m \in \N$;
    \item $\sigma \Harpoon{[2, m]} \cong \rho \Harpoon{[1, m-1]}$;
\end{itemize}

there exist at least two structures $\theta_1, \theta_2$ of the same type and of length $m+1$, such that $\theta_i\Harpoon{[1, \hspace{0.7mm}m]} \cong \sigma$ and $\theta_i\Harpoon{[2,\hspace{0.7mm} m+1]} \cong \rho$ for $i=1, 2$.}

\end{defn}

\Rev{As for valid structures, if the type of structure of the underlying set of a poset is bountiful, we will abuse the terminology by referring to structures of this type as \emph{bountiful structures}.}

The next lemma shows that every non-trivial path is ambiguous in factor graphs of posets of bountiful structures.

\begin{lemma}
\label{lemma tfae}
Let $C=\Av(B)$ be an avoidance set of a poset of bountiful structures under the consecutive order and $m \geq b$.  Then the following hold:
\begin{enumerate}
\item Any path $\pi$ of length 1 in $\Gamma_C^m$ is ambiguous $(\Sigma(\pi) \geq 2)$;
\item All paths of length $\geq 1$ in $\Gamma_C^m$ are ambiguous.
\end{enumerate}
\end{lemma}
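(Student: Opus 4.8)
The plan is to prove the two parts in order, with part (1) being the genuine content and part (2) following by a short induction that bootstraps from part (1) together with Proposition~\ref{prop paths}.

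For part~(1), let $\pi = v_0 \rightarrow v_1$ be a path of length $1$ in $\Gamma_C^m$. By definition of the factor graph, $v_0 \Harpoon{[2, m]} \cong v_1 \Harpoon{[1, m-1]}$, and since both $v_0, v_1 \in C_m$ they are in particular valid structures of length $m$ satisfying the hypothesis of Definition~\ref{defn bountiful}. Bountifulness therefore supplies two distinct structures $\theta_1, \theta_2$ of length $m+1$ with $\theta_i \Harpoon{[1, m]} \cong v_0$ and $\theta_i \Harpoon{[2, m+1]} \cong v_1$. The only thing left to check is that $\theta_1, \theta_2 \in \Sigma(\pi)$, i.e. that each $\theta_i$ actually lies in $C = \Av(B)$ and has associated path exactly $\pi$. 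For the first point: any substructure of $\theta_i$ of length $\leq m$ is a consecutive restriction, hence (being of length $\le m$) is a consecutive restriction of either $v_0$ or $v_1$, both of which avoid $B$; since $b \le m$, no forbidden substructure (which has length $\le b \le m$) can occur, so $\theta_i \in C$. For the second point, $\Pi(\theta_i) = \theta_i\Harpoon{[1,m]} \rightarrow \theta_i\Harpoon{[2,m+1]} = v_0 \rightarrow v_1 = \pi$ directly from the definitions. Since $\theta_1 \neq \theta_2$, we conclude $|\Sigma(\pi)| \geq 2$. This mirrors the argument already given in the proof of Lemma~\ref{lemma validity 2}, just with two witnesses instead of one.

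For part~(2), I would induct on the length $k \ge 1$ of a path $\pi$ in $\Gamma_C^m$. The base case $k = 1$ is part~(1). For $k > 1$, write $\pi = \eta \xi$ where $\eta = v_0 \rightarrow \dots \rightarrow v_{k-1}$ has length $k-1 \ge 1$ and $\xi = v_{k-1} \rightarrow v_k$ has length $1$. By Lemma~\ref{lemma validity 2}, $\Sigma(\xi) \neq \varnothing$; pick $\nu \in \Sigma(\xi)$. By the inductive hypothesis $\Sigma(\eta)$ contains two distinct structures $\rho_1, \rho_2$. Applying Proposition~\ref{prop paths} to the decomposition $\pi = \eta\xi$ (working in $\Gamma_C = \Gamma_C^b$; for general $m \ge b$ the same argument applies to $\Gamma_C^m$ verbatim with $b$ replaced by $m$), for each $i$ we obtain $\sigma_i \in \Sigma(\pi)$ with $\sigma_i \Harpoon{[1, |\rho_i|]} \cong \rho_i$. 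Since $\rho_1 \not\cong \rho_2$ and each has length $|\eta| + m - 1$ (independent of $i$), and $\sigma_1 \Harpoon{[1,|\rho_1|]} \cong \rho_1 \not\cong \rho_2 \cong \sigma_2\Harpoon{[1,|\rho_2|]}$ with $|\rho_1| = |\rho_2|$, the structures $\sigma_1$ and $\sigma_2$ must be distinct. Hence $|\Sigma(\pi)| \ge 2$ and $\pi$ is ambiguous.

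The main obstacle I anticipate is purely bookkeeping rather than conceptual: making sure the lengths line up so that distinctness of $\rho_1, \rho_2$ propagates to distinctness of $\sigma_1, \sigma_2$ (this needs that all structures in $\Sigma(\eta)$ share the common length $|\eta| + m - 1$, which is immediate from the definition of $\Pi$ and $\Sigma$), and checking that the $\theta_i$ and $\sigma_i$ genuinely land in $C$ — but that containment check is routine and already appears in the proof of Lemma~\ref{lemma validity 2}, since consecutive restrictions of length $\le m$ of the glued structures are restrictions of the pieces, which avoid $B$. One should also note that Proposition~\ref{prop paths} as stated is for $\Gamma_C$; if one wants the statement for all $m \ge b$ rather than just $m = b$, it is worth remarking that Proposition~\ref{prop paths} holds with $b$ replaced by any $m \ge b$ by the identical proof, or alternatively phrasing the induction so as only to use $m = b$ where the proposition is invoked.
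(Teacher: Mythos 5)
Your proof is correct and follows essentially the same route as the paper: part (1) is the direct application of bountifulness exactly as in the paper, and your induction for part (2) via Proposition~\ref{prop paths} merely spells out the propagation step that the paper states in one line (that any path containing an ambiguous length-one subpath is itself ambiguous). The only slip is a harmless off-by-one in the common length of the structures in $\Sigma(\eta)$ (with the paper's edge-count convention for path length it is $|\eta|+m$, not $|\eta|+m-1$), which does not affect the distinctness argument.
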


\begin{proof}
We begin by showing that $(1)$ holds.  Given any path $\pi = \sigma \rightarrow \rho$ in $\Gamma^m_C$, $\sigma \Harpoon{[2,\hspace{0.7mm} m]} \cong \rho \Harpoon{[1,\hspace{0.7mm} m-1]}$ so there are two structures $\theta_1, \theta_2$ on $m+1$ points such that $\theta_i\Harpoon{[1,\hspace{0.7mm} m]} \cong \sigma$ and $\theta_i\Harpoon{[2, \hspace{0.7mm}m+1]} \cong \rho$, for $i=1, 2$, by the assumption of bountifulness.  \Rev{That $\theta_1, \theta_2 \in C$ follows by the same argument used in the proof of Lemma \ref{lemma validity 2}.}  Both $\theta_1$ and $\theta_2$ are in $\Sigma(\pi)$, so $|\Sigma(\pi)|\geq 2$ as required.

\Rev{We will use the fact that $(1)$ holds to prove $(2)$.  Let $\pi=v_1 \rightarrow \dots \rightarrow v_n$ be a path in $\Gamma_C^m$.  If $|\pi|=1$ then $\pi$ is ambiguous by $(1)$, so suppose that $|\pi|>1$ (meaning that $n \geq 3$).  By $(1)$, the subpath $\eta = v_1 \rightarrow v_2$ is ambiguous, so there exist two distinct structures $\theta_1, \theta_2 \in \Sigma(\eta)$.  Consider the subpath $\pi^-=v_2 \rightarrow \dots \rightarrow v_n$ of $\pi$.  By Lemma \ref{lemma validity 2}, there exists $\sigma \in \Sigma(\pi^-)$.  Now $\sigma$ overlaps with both $\theta_1$ and $\theta_2$ on the $m$ vertices of $v_2$, so since bountiful types of structures are also valid, there exist structures $\rho_1, \rho_2$ which combine $\sigma$ and $\theta_1, \theta_2$ respectively.  Note that $\rho_1$ and $\rho_2$ are distinct because $\theta_1$ and $\theta_2$ are distinct.  Again, that $\rho_1, \rho_2 \in C$ follows in the same way as in the proof of Lemma \ref{lemma validity 2}.  To finish, we observe that, since $\Pi(\rho_1)=\Pi(\rho_2)=\pi$, it follows that $\rho_1, \rho_2 \in \Sigma(\pi)$, and so $\pi$ is ambiguous.}
\end{proof}


\begin{lemma}
\label{lemma bountiful structures}
Each of the following are bountiful structures: \\
(1) Graphs \hspace{1.5cm} (2) Simple graphs \hspace{1.5cm} (3) Digraphs\\ (4) Tournaments \hspace{0.7cm} (5) Relational structures with signature $\sigma$.
\end{lemma}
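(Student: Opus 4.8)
The plan is to verify the defining property of \emph{bountiful} (Definition~\ref{defn bountiful}) directly in each case, using that all of the listed types are already known to be valid by Lemma~\ref{lemma valid egs}. Fix structures $\sigma,\rho$ of one of these types, both of length $m$, with $\sigma\Harpoon{[2,m]}\cong\rho\Harpoon{[1,m-1]}$. As in the proof of Lemma~\ref{lemma valid egs}, one combining structure is obtained on the underlying set $[1,m+1]$ by identifying the last $m-1$ points of $\sigma$ with the first $m-1$ points of $\rho$ via the given isomorphism. The crucial observation is that in \emph{any} structure $\theta$ on $[1,m+1]$ with $\theta\Harpoon{[1,m]}\cong\sigma$ and $\theta\Harpoon{[2,m+1]}\cong\rho$, every relational tuple all of whose coordinates lie in $[1,m]$ is forced by the first condition and every tuple all of whose coordinates lie in $[2,m+1]$ is forced by the second; the only freedom is in the tuples that use both the point $1$ and the point $m+1$. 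Moreover the underlying linear order of such a $\theta$ is the standard order $1<2<\dots<m+1$, so any isomorphism between two such structures is the identity; hence two choices of this "straddling" data that differ anywhere produce non-isomorphic, and therefore genuinely distinct, structures in the poset.

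It then remains to exhibit two admissible choices of straddling data in each case. For graphs and simple graphs the only straddling datum is whether $\{1,m+1\}$ is an edge, so taking $\theta_1$ without this edge and $\theta_2$ with it gives two structures of the required type, each satisfying $\theta_i\Harpoon{[1,m]}\cong\sigma$ and $\theta_i\Harpoon{[2,m+1]}\cong\rho$ (and no loop is introduced, so simple graphs stay simple). For digraphs the straddling data are the ordered pairs $(1,m+1)$ and $(m+1,1)$; choosing $\theta_1$ with neither present and $\theta_2$ with $(1,m+1)$ present works. For tournaments, the validity construction adds a single directed edge between the unique non-adjacent pair $\{1,m+1\}$, and the two possible orientations of that edge give two tournaments $\theta_1,\theta_2$ — every other pair is already joined by exactly one edge inherited from $\sigma$ or $\rho$, so the completeness condition holds in both cases. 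Finally, for relational structures of signature $\sigma$ containing a relation $R$ of arity $n\ge 2$ (which is exactly what is used, and what holds for graphs, digraphs and tournaments), the $n$-tuple $(1,m+1,m+1,\dots,m+1)$ uses both $1$ and $m+1$, so putting it into $R$ or leaving it out produces two distinct structures of signature $\sigma$, each restricting to $\sigma$ and to $\rho$ as required.

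I expect no serious obstacle here: once the principle ``only the $\{1,m+1\}$-straddling data is free'' is isolated, each case is a one-line verification. The two places that need a little care are (i) checking that both candidates really satisfy the type's defining constraints — for tournaments this is the remark that every pair other than $\{1,m+1\}$ is already correctly oriented, and for simple graphs that no loop is created; and (ii) noting, in the relational case, that the argument uses a relation of arity at least two, so the statement for $\mc{R}_\sigma$ should be understood with that proviso (if every relation apart from the linear order is unary, the combination of $\sigma$ and $\rho$ is unique and the type is not bountiful, exactly as happens for words).
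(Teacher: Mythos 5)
Your proposal is correct and follows essentially the same route as the paper: in each case the paper exhibits the two required extensions by including or excluding an edge (or directed edge, or orientation, or tuple) involving the points $1$ and $m+1$, exactly as you do. Your added proviso for $\mathcal{R}_\sigma$ — that the argument needs a relation of arity at least two beyond the linear order, since a purely unary signature (as for words) gives no straddling freedom and is not bountiful — is a correct and worthwhile refinement that the paper's statement of (5) leaves implicit.
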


\begin{proof}
In each case, we show that the structures satisfy Definition \ref{defn bountiful}.  

(1) Let $G, H$ be graphs on $m$ points such that $G \Harpoon{[2, \hspace{0.7mm} m]} \cong H \Harpoon{[1, \hspace{0.7mm}m-1]}$.  We can obtain two different graphs $W_1, W_2$ on $m+1$ points so that $W_i\Harpoon{[1, \hspace{0.7mm}m]} \cong G$ and $W_i\Harpoon{[2,\hspace{0.7mm} m+1]} \cong H$ (for $i=1, 2$) by either including or excluding an edge from $1$ to $m+1$.

(2) For simple graphs, we do exactly the same as for graphs.

(3) For any digraphs $D, E$ on $m$ points such that $D \Harpoon{[2,\hspace{0.7mm} m]} \cong E \Harpoon{[1, \hspace{0.7mm}m-1]}$, we can find two different digraphs $F_1, F_2$ on $m+1$ points so that $F_i\Harpoon{[1,\hspace{0.7mm} m]} \cong D$ and $F_i\Harpoon{[2, \hspace{0.7mm}m+1]} \cong E$ (for $i=1, 2$) by either including or excluding a directed edge from $1$ to $m+1$.

(4) For any two tournaments $S, T$ on $m$ points such that $S \Harpoon{[2,\hspace{0.7mm} m]} \cong T \Harpoon{[1, \hspace{0.7mm}m-1]}$, we obtain two different tournaments $U_1, U_2$ on $m+1$ points so that $U_i\Harpoon{[1,\hspace{0.7mm} m]} \cong S$ and $U_i\Harpoon{[2,\hspace{0.7mm} m+1]} \cong T$ (for $i=1, 2$) by either including a directed edge from $1$ to $m+1$ or from $m+1$ to 1.

(5) For any relational structures $R, S$ with signature $\sigma$ on $m$ points such that $R \Harpoon{[2, \hspace{0.7mm}m]} \cong S \Harpoon{[1,\hspace{0.7mm} m-1]}$, we can construct two different relational structures $C_1, C_2$ with signature $\sigma$ on $m+1$ points so that $C_i\Harpoon{[1,\hspace{0.7mm} m]} \cong R$ and $C_i\Harpoon{[2, \hspace{0.7mm}m+1]} \cong S$ (for $i=1, 2$) by either including or excluding a relation including both 1 and $m+1$ in any component.
\end{proof}

\begin{rem}
\label{lemma not bountiful structures}
The following structures are not bountiful: \\
(1) Forests; \\
(2) Permutations;\\
(3) Equivalence relations;\\
(4) Linear orders;\\
(5) Posets;\\
(6) Words over a finite alphabet.
\end{rem}

\begin{proof}
(1) Forests are not bountiful because they are not valid, as shown in Example \ref{ex factor graphs don't work}.  For (2) and (6), see \cite{mr}, and for (3), see \cite{ir}, for examples showing paths which have only one associated structure, breaking condition $(2)$ of Lemma \ref{lemma tfae}.  

We will look at linear orders in a little more detail.  As noted in the proof of Lemma \ref{lemma valid egs}, in our framework the only linear order on $\Rev{m}$ points is $1 \leq 2 \leq \dots \leq m$.  Therefore, for linear orders, $m$-dimensional factor graphs consist of a single vertex of this form with a loop on it.  The only structure associated with a path of length one in such a factor graph is $1 \leq 2 \leq \dots \leq m \leq m+1$, meaning that condition $(1)$ of Lemma \ref{lemma tfae} is broken.  Hence linear orders are not bountiful.

For (5), we consider two specific, identical posets: linear orders on $m$ points, which of course overlap on $m-1$ points.  Identifying these points yields exactly one poset: the linear order on $m+1$ points.  Therefore, posets are not bountiful.
\end{proof}

\section{Well quasi-order for bountiful structures}
\label{sec wqo bountiful}

In this section we establish criteria for wqo of avoidance sets of bountiful structures.  We begin with two general results which show that two cycle types lead to non-wqo. These results mirror analogous results in \cite{mr} and \cite{ir}.

\begin{lemma}
\label{lemma in out cycle}
Let $C$ be an avoidance set of a poset of valid structures under the consecutive order.  If $\Gamma_C^m$ contains an in-out cycle, then $C$ is not wqo.
\end{lemma}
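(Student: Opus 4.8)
The plan is to exploit the characterization of non-wqo via in-out cycles already available for paths (Proposition \ref{prop digraphs wqo}), and to transfer this to the structures themselves using the machinery of valid structures developed in Section \ref{sec valid}. So suppose $\Gamma_C^m$ contains an in-out cycle $\gamma$. By Proposition \ref{prop digraphs wqo}, the set of paths of $\Gamma_C^m$ is not wqo under the subpath order, so there is an infinite antichain of paths $\pi_1, \pi_2, \dots$; concretely, the standard construction built from an in-out cycle gives paths that traverse $\gamma$ repeatedly, punctuated by detours through the in-vertex and out-vertex, so that no $\pi_i$ is a subpath of $\pi_j$. The goal is to lift this to an infinite antichain of structures in $C$.

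First I would, for each $i$, use Lemma \ref{lemma validity 2} (validity) to pick some $\sigma_i \in \Sigma(\pi_i)$, which is non-empty precisely because the underlying structures are valid. Then I would argue that $\{\sigma_i\}$ contains an infinite antichain in $(X, \leq)$. The issue preventing an immediate appeal to Lemma \ref{lemma 1-1} is that there may be ambiguous paths in $\Gamma_C^m$, so $\sigma_i \leq \sigma_j$ need not be equivalent to $\Pi(\sigma_i) \leq \Pi(\sigma_j)$. However, Proposition \ref{prop substructures and subpaths} still gives one direction: $\sigma_i \leq \sigma_j$ implies $\Pi(\sigma_i) \leq \Pi(\sigma_j)$, i.e. $\pi_i \leq \pi_j$. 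Since the $\pi_i$ form an antichain, this forces $i = j$. Hence $\{\sigma_i : i \in \N\}$ is an infinite antichain in $C$, provided the $\sigma_i$ are pairwise distinct, which follows since their associated paths $\pi_i = \Pi(\sigma_i)$ are pairwise distinct. Therefore $C$ is not wqo.

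The main obstacle — and the only place that needs genuine care — is establishing that an in-out cycle in a finite digraph actually yields an infinite antichain of paths under the subpath order. This is exactly the content that Proposition \ref{prop digraphs wqo} packages (the equivalence of "no in-out cycle" with "paths are wqo"), so I would simply invoke it rather than reproving the antichain construction; if a self-contained argument were wanted, one would take the in-out cycle with an in-vertex $u$ of in-degree $\geq 2$ and an out-vertex $w$ of out-degree $\geq 2$, fix a short "decorating" edge or path at each, and let $\pi_n$ be the path that goes around the cycle, decorates at $u$, goes around $n$ times, decorates at $w$, and exits — the asymmetry of the decorations on the two ends prevents any $\pi_m$ from embedding as a subpath of $\pi_n$. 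Everything else is bookkeeping: checking $\Sigma(\pi_i) \neq \varnothing$ (Lemma \ref{lemma validity 2}), checking distinctness of the $\sigma_i$, and applying the easy direction Proposition \ref{prop substructures and subpaths} to conclude they form an antichain.

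I would also remark that this lemma does not require bountifulness, only validity, which is why it is stated for valid structures; bountifulness will be needed later to handle the converse direction and the role of ambiguous paths, but here the one-directional Proposition \ref{prop substructures and subpaths} suffices.
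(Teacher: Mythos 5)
Your proof is correct and takes essentially the same route as the paper's: Proposition \ref{prop digraphs wqo} yields an infinite antichain of paths, validity (Lemma \ref{lemma validity 2}) supplies a structure associated with each path, and the contrapositive of Proposition \ref{prop substructures and subpaths} transfers the antichain to $C$. The paper's two-sentence proof leaves the middle step (choosing the $\sigma_i$ via validity) implicit, whereas you spell it out; the substance is identical.
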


\begin{proof}
Since $\Gamma_C^m$ contains an in-out cycle, its paths are not wqo under the subpath order by Proposition \ref{prop digraphs wqo}.  That $C$ is not wqo follows by the contrapositive of Proposition \ref{prop substructures and subpaths}.
\end{proof}

\begin{lemma}
\label{lemma amb cycles}
Let $C$ be an avoidance set of a poset of valid structures under the consecutive order.  If $\Gamma_C$ contains an ambiguous cycle, $C$ is not wqo.
\end{lemma}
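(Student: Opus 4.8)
The plan is to exhibit an explicit infinite antichain in $C$ by pulling back, through the map $\Sigma$, an infinite antichain of paths that an ambiguous cycle produces in $\Gamma_C$. The intuition is that a cycle already gives us arbitrarily long paths by looping around it, and ambiguity at some edge of the cycle gives us a genuine ``choice'' we can make differently in different places, so that no two of the resulting structures embed in one another.

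First I would set up notation: let $\gamma = w_0 \rightarrow w_1 \rightarrow \dots \rightarrow w_\ell = w_0$ be an ambiguous cycle in $\Gamma_C$, and (by definition of ambiguous) pick an edge $w_{j} \rightarrow w_{j+1}$ lying on $\gamma$ — or more precisely a subpath witnessing $|\Sigma| > 1$; in the bountiful case every edge works, but for general valid $C$ the ambiguity lives on some subpath of $\gamma$, so I would first rotate the cycle (using that a cyclic rotation of a cycle is again a cycle through the same vertices) so that this ambiguous subpath sits at the start. Write $\delta$ for that short ambiguous subpath, so $\gamma = \delta \gamma'$ with $|\Sigma(\delta)| \geq 2$, and fix two distinct structures $\rho_0, \rho_1 \in \Sigma(\delta)$ with $\rho_0 \Harpoon{[1,m-1]} = \rho_1 \Harpoon{[1,m-1]}$ and $\rho_0 \Harpoon{[2,m]} = \rho_1\Harpoon{[2,m]}$ but $\rho_0 \neq \rho_1$ (they agree on all ``small'' windows along $\delta$ but differ on a larger window — this is exactly what being distinct elements of $\Sigma(\delta)$ means). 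The cycle lets us form, for any finite binary string $s = s_1 s_2 \cdots s_k$, a path $\pi_s$ in $\Gamma_C$ consisting of $k$ copies of $\gamma$ concatenated, and then using Proposition~\ref{prop paths} repeatedly I would build a structure $\sigma_s \in \Sigma(\pi_s)$ whose restriction to the $i$-th copy of the $\delta$-window agrees with $\rho_{s_i}$. The candidate antichain is then $\{\sigma_{10^n 1} : n \geq 0\}$ or similar — a family whose ``fingerprint strings'' pairwise fail to be factors of one another.

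The key steps, in order: (1) rotate the ambiguous cycle so the ambiguity is at the front and extract $\rho_0 \neq \rho_1$ as above; (2) via iterated application of Proposition~\ref{prop paths}, show that for each binary string $s$ there is $\sigma_s \in C$ realising the prescribed pattern of $\rho_0$'s and $\rho_1$'s on successive $\delta$-windows, and that $\sigma_s$ has length a fixed linear function of $|s|$; (3) show that if $\sigma_s \leq \sigma_t$ (consecutive order) then the string $s$ occurs as a factor (consecutive substring) of $t$ — this uses Proposition~\ref{prop substructures and subpaths} to get $\Pi(\sigma_s) \leq \Pi(\sigma_t)$ in $\Gamma_C$, then a combinatorial argument that a subpath of a power of $\gamma$ which reads off pattern $s$ must sit ``in phase'' with the $\gamma$-blocks (because the cycle is a \emph{simple} cycle and a subpath long enough to see $|s|$ many $\delta$-windows is forced to traverse the cycle block-aligned), hence forces the factor relation on fingerprints; (4) choose an infinite set of binary strings, no one a factor of another — e.g. $\{1\,0^n\,1 : n \in \N\}$ — so the corresponding $\sigma_s$ form an infinite antichain, giving non-wqo.

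The main obstacle is step (3): controlling how a subpath of $\Pi(\sigma_t)$ — which lives inside a concatenation of $|t|$ copies of $\gamma$ — can align with the $\delta$-windows, so that ``$\sigma_s$ embeds in $\sigma_t$'' really does translate into ``$s$ is a factor of $t$.'' The subtlety is that $\Pi(\sigma_t)$ may be longer than $\pi_t$ (since $\sigma_t$ has ``tails'' on either end coming from Proposition~\ref{prop paths}), and a priori a copy of $\Pi(\sigma_s)$ inside $\Pi(\sigma_t)$ might be shifted and misread the pattern; I expect this to be handled by taking the cycle to be simple and long, noting that the vertices $w_0, \dots, w_{\ell-1}$ are distinct, so the phase of any sufficiently long subpath is uniquely determined, and by padding the strings (using $0^n$ blocks) so that any shift that is not block-aligned reads an inconsistent pattern. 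I would also double-check that distinct binary strings really do give non-isomorphic $\sigma_s$ (injectivity of $s \mapsto \sigma_s$), which follows once step (3) is in place since a string is trivially a factor of itself only, among an antichain of strings. This argument closely parallels the ambiguous-cycle lemmas in \cite{mr} and \cite{ir}, so I would lean on those for the precise bookkeeping.
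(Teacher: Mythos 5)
Your overall strategy is genuinely different from the paper's. The paper does not construct an antichain of structures directly: it takes $k$ to be the length of a \emph{shortest} ambiguous subpath of the cycle, lifts the cycle to the $(b+k)$-dimensional factor graph by choosing one structure $\rho_i$ for each length-$k$ window (the minimality of $k$ guarantees the length-$(k-1)$ overlaps are unambiguous, so the $\rho_i$ really are joined by edges there), observes that a second structure associated with a minimal ambiguous window supplies both an in-edge and an out-edge to the lifted cycle, and then invokes Lemma~\ref{lemma in out cycle}. All of the phase/alignment combinatorics you are worried about is thereby delegated to Proposition~\ref{prop digraphs wqo}, which has already been established for abstract digraphs.

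The concrete gap in your version is the simplicity assumption in step (3). The hypothesis only gives you an ambiguous \emph{cycle}, and an ambiguous cycle need not contain, nor be replaceable by, a simple ambiguous cycle: the minimal ambiguous subpath may be long and may wind through repeated vertices (consider a figure-eight closed walk whose ambiguity arises only on a subpath straddling the shared vertex -- neither constituent simple cycle need be ambiguous). Without simplicity, your claim that any occurrence of $\gamma^{|s|}$ inside $\gamma^{|t|}$ is block-aligned fails, and with it the implication ``$\sigma_s \leq \sigma_t \Rightarrow s$ is a factor of $t$''; note that at the path level there is no obstruction at all, since $\Pi(\sigma_s)=\gamma^{|s|}$ is \emph{always} a subpath of $\Pi(\sigma_t)=\gamma^{|t|}$ when $|s|\leq|t|$, so the entire burden falls on this alignment argument. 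The gap is repairable -- if the subgraph traversed by the closed walk is not a single simple cycle, it is a strongly connected digraph with a vertex of out-degree $\geq 2$ and one of in-degree $\geq 2$, hence already contains an in-out cycle and Lemma~\ref{lemma in out cycle} finishes immediately; otherwise $\gamma$ is a power of a simple cycle and your phase argument goes through -- but as written this case split is missing, and ``I would lean on \cite{mr} for the precise bookkeeping'' is doing real work precisely at the point where the proof is hardest. There are also minor slips (the undefined $m$ in the description of $\rho_0,\rho_1$, and the need to check that every $\delta$-window of length $b+k$ actually fits inside $\sigma_s$ when $k$ exceeds the cycle length), but these are routine.
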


\begin{proof}
We use the same method as given for $\mc{P}$ by McDevitt and Ru\v{s}kuc in \cite{mr}.

Suppose $\eta =\sigma_{1} \rightarrow \sigma_{2} \rightarrow \dots \rightarrow \sigma_{n}$ is the ambiguous cycle and take $k$ to be the length of the shortest ambiguous subpath of $\eta$.  Now we consider all subpaths of $\eta$ of length $k$, so for each $i \in [1, n]$, we take $\eta_{i} = \sigma_{i} \rightarrow \sigma_{i+1} \rightarrow \dots \rightarrow \sigma_{i+k}$, where $\sigma_{j} = \sigma_{j-n}$ for $j>n$.  For each such $i$, let $\rho_{i} \in \Sigma(\eta_{i})$.  Suppose that $\eta_{m}$ is a (minimal) ambiguous subpath of $\eta$.

Since $\rho_{i}\Harpoon{[2, \hspace{0.7mm}b+k]}$ and $\rho_{i+1}\Harpoon{[1,\hspace{0.7mm} b+k-1]}$ trace the same path in $\Gamma_{C}$ for each $i$, $\rho_{i} \rightarrow \rho_{i+1}$ is an edge in the $b+k$-dimensional factor graph of $C$.  This means that $\pi = \rho_{1} \rightarrow \rho_{2} \rightarrow \dots \rightarrow \rho_{n} \rightarrow \rho_{1}$ is a cycle in this graph.

Since $\eta_{m}$ is ambiguous, it has at least two associated structures $\rho_{m}, \rho_{m}^{\prime} \in \Sigma(\eta_{m})$, and so by the previous discussion $\rho_{m}^{\prime} \rightarrow \rho_{m+1}$ is an in-edge to $\pi$ and $\rho_{m-1} \rightarrow \rho^{\prime}_{m}$ is an out-edge to $\pi$.  Thus $\pi$ is an in-out cycle, and so $C$ is not wqo by Lemma \ref{lemma in out cycle}.  
\end{proof}

We are now ready to prove our main theorem on well quasi-order, which provides criteria for avoidance sets of posets of bountiful structures to be wqo under the consecutive order.  Perhaps surprisingly, this result shows that the only such wqo avoidance sets are the finite ones.

\begin{thrm}
\label{thrm wqo bountiful}
Let $C=\Av(B)$ be a \Rev{finitely based} avoidance set of a poset of bountiful structures.  The following are equivalent:
\begin{enumerate}
\item $C$ is wqo under the consecutive order;
\item $C$ is finite;
\item $\Gamma_C$ contains no cycles.
\end{enumerate}
\end{thrm}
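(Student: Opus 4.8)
The plan is to prove the cycle of implications $(2)\Rightarrow(1)\Rightarrow(3)\Rightarrow(2)$, using the factor-graph machinery and Propositions \ref{prop digraphs wqo} and \ref{prop substructures and subpaths} together with the ambiguity phenomenon peculiar to bountiful structures (Lemma \ref{lemma tfae}).

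For $(2)\Rightarrow(1)$: this is immediate, since any finite poset is wqo (as already observed in the text). No work is needed here.

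For $(3)\Rightarrow(2)$: suppose $\Gamma_C$ contains no cycles. Then $\Gamma_C$ is a finite acyclic digraph, so there is a uniform bound, say $N$, on the length of any path in $\Gamma_C$. Now I would argue that $C_{[b,\infty)}$ is finite: any $\sigma \in C$ of length $n \geq b$ gives rise to the path $\Pi(\sigma)$ in $\Gamma_C$, which has length $n - b$; since path lengths are bounded by $N$, we get $n \leq b + N$, so $C$ contains only structures of length at most $b+N$. As there are finitely many structures of each length (up to isomorphism), $C$ is finite. (Strictly, I should also note $\Gamma_C$ itself is finite, which holds because $C_b$ is finite.)

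For $(1)\Rightarrow(3)$: I prove the contrapositive. Suppose $\Gamma_C$ contains a cycle. If that cycle is an in-out cycle, then $C$ is not wqo by Lemma \ref{lemma in out cycle} (valid structures suffice here, and bountiful structures are valid). Otherwise the cycle is not in-out; but here is where bountifulness does the real work: by Lemma \ref{lemma tfae}(2), every path of length $\geq 1$ in $\Gamma_C$ is ambiguous, so in particular our cycle is an ambiguous cycle, and hence $C$ is not wqo by Lemma \ref{lemma amb cycles}. Either way $C$ is not wqo, establishing $(1)\Rightarrow(3)$.

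The only genuine subtlety — and the step I would single out as the crux — is the $(1)\Rightarrow(3)$ direction, where the hypothesis of bountifulness is essential rather than mere validity: for merely valid structures a cycle in $\Gamma_C$ need not produce an infinite antichain, but bountifulness forces every nontrivial path, and thus every cycle, to be ambiguous, after which Lemma \ref{lemma amb cycles} applies. The remaining implications are routine finiteness bookkeeping about acyclic finite digraphs.
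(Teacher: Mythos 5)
Your proposal is correct and follows essentially the same route as the paper: $(2)\Rightarrow(1)$ is trivial, $(3)\Rightarrow(2)$ bounds structure lengths via path lengths in the finite acyclic $\Gamma_C$, and $(1)\Rightarrow(3)$ uses bountifulness (Lemma \ref{lemma tfae}) to conclude every cycle is ambiguous and then applies Lemma \ref{lemma amb cycles}. The only difference is your case split on whether the cycle is in-out, which is harmless but redundant, since the ambiguity argument already covers every cycle.
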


\begin{proof}
In what follows, recall that if $B=\varnothing$, then $b=1$ and so $\Gamma_C$ is not empty. \Rev{If $\Gamma_C$ is empty, the result is trivial, so we assume that this is not the case for the remainder of this proof.}  

$(1) \Rightarrow (3)$ We prove the contrapositive.  If $\Gamma_C$ contains a cycle, it is ambiguous by Lemma \ref{lemma tfae}.  It follows that $C$ is not wqo by Lemma \ref{lemma amb cycles}.  

$(3) \Rightarrow (2)$ Suppose $\Gamma_C$ has no cycles and $n$ vertices.  The maximum length of a path is $n-1$, so there is a bound on the length of structures associated with paths of $\Gamma_C$.  Hence, there is a bound on the length of structures in $C$, so $C$ is finite.

$(2) \Rightarrow (1)$ If $C$ is finite, it is wqo as it cannot contain infinite antichains.
\end{proof}

Since it is easy to construct $\Gamma_C$ and check whether it contains cycles, we obtain the following corollary:

\begin{cor}
\label{cor bountiful}
The wqo problem is decidable for posets of bountiful structures under the consecutive order. \qed
\end{cor}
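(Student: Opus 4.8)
The plan is to reduce the wqo problem for a bountiful poset to testing acyclicity of a finite digraph, using the equivalence $(1)\Leftrightarrow(3)$ of Theorem~\ref{thrm wqo bountiful}. Given a finite input set $B$, first compute $b$, the maximum length of an element of $B$, and then build the factor graph $\Gamma_C$ of $C=\Av(B)$: its vertices are the structures in $C_b$ and there is an edge $u\rightarrow v$ exactly when $u\Harpoon{[2,\hspace{0.7mm}b]}\cong v\Harpoon{[1,\hspace{0.7mm}b-1]}$. Once $\Gamma_C$ has been constructed, run any standard cycle-detection procedure on it (depth-first search, or iteratively deleting vertices of in-degree $0$ and checking whether the graph empties). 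If $\Gamma_C$ is acyclic, output that $C$ is wqo; otherwise output that it is not.

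Correctness of this procedure is immediate: by Theorem~\ref{thrm wqo bountiful}, $C$ is wqo under the consecutive order if and only if $\Gamma_C$ contains no cycles, so the algorithm answers correctly in all cases.

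The only point that needs to be addressed with any care is the \emph{effectiveness} of the construction of $\Gamma_C$. For the classes of structures we consider there are, up to isomorphism, only finitely many structures of any fixed length, so one can enumerate all structures of length $b$; for each candidate $\sigma$, deciding whether $\sigma\in C$ amounts to checking, for every $\beta\in B$, each of the finitely many contiguous maps from the domain of $\beta$ into $\sigma$, which is a finite computation. The edge relation is similarly decidable, being a finite isomorphism test between restrictions of length $b-1$. Hence $\Gamma_C$ is a finite digraph that can be produced algorithmically from $B$, and the cycle test on it terminates.

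There is no substantive obstacle here beyond Theorem~\ref{thrm wqo bountiful} itself; the mild hypothesis implicitly required is that the underlying type of structure is presented so that fixed-length structures can be enumerated and the consecutive-embedding relation tested by an algorithm, which holds for all the concrete posets of interest, such as $\mc{G}$, $\mc{S}$, $\mc{D}$, $\mc{T}$ and $\mc{R}_{\sigma}$.
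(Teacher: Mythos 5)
Your proposal is correct and follows essentially the same route as the paper, which derives the corollary directly from the equivalence of wqo with acyclicity of $\Gamma_C$ in Theorem~\ref{thrm wqo bountiful} and the observation that $\Gamma_C$ is easy to construct and test for cycles. Your additional remarks on the effectiveness of enumerating $C_b$ and testing the edge relation simply make explicit what the paper leaves implicit.
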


\begin{cor}
\label{cor bountiful egs}
In each of the following posets, a \Rev{finitely based} avoidance set is wqo if and only if it is finite:
\begin{center}
(1) $\mathcal{G}$; \hspace{1.5cm} (2) $\mathcal{S}$; \hspace{1.5cm} (3) $\mathcal{D}$;\\
\hspace{-2.4cm}(4) $\mathcal{T}$; \hspace{1.5cm} (5) $\mathcal{R}_{\sigma}$.
\end{center}
\end{cor}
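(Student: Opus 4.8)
The plan is to deduce this immediately from Theorem \ref{thrm wqo bountiful} together with Lemma \ref{lemma bountiful structures}. The theorem already tells us that for \emph{any} poset of bountiful structures, an avoidance set is wqo if and only if it is finite (equivalently, if and only if its factor graph is acyclic). So all that remains is to verify that each of the five named posets is in fact a poset of bountiful structures, which is precisely the content of Lemma \ref{lemma bountiful structures}: graphs, simple graphs, digraphs, tournaments and relational structures with a fixed signature $\sigma$ appear there as items (1)--(5). Hence each of $\mc{G}$, $\mc{S}$, $\mc{D}$, $\mc{T}$, $\mc{R}_\sigma$ falls under the hypothesis of Theorem \ref{thrm wqo bountiful}, and the equivalence ``wqo $\iff$ finite'' holds verbatim in each.

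The one point that warrants a sentence of care is that bountifulness (Definition \ref{defn bountiful}) is phrased as a property of \emph{valid} structures, so strictly speaking invoking Lemma \ref{lemma bountiful structures} presupposes that these five types are valid as well; but that is exactly Lemma \ref{lemma valid egs}, items (1)--(5). So the logical skeleton is: Lemma \ref{lemma valid egs} $\Rightarrow$ validity; Lemma \ref{lemma bountiful structures} $\Rightarrow$ bountifulness; Theorem \ref{thrm wqo bountiful} $\Rightarrow$ the stated dichotomy. Writing it out, the proof is a two-line citation chain and there is essentially no obstacle: the work has all been done upstream, and this corollary simply packages the general result for the concrete posets of interest. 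I would simply state: ``By Lemmas \ref{lemma valid egs} and \ref{lemma bountiful structures}, each of $\mc{G}$, $\mc{S}$, $\mc{D}$, $\mc{T}$ and $\mc{R}_\sigma$ is a poset of bountiful structures, so the claim follows from Theorem \ref{thrm wqo bountiful}.'' and close with $\qed$ — indeed the excerpt already supplies the \verb|\qed| via the corollary having no displayed proof, so even that is automatic.

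If one wanted to be slightly more explicit, the alternative is to note that the chain $(1)\Leftrightarrow(2)$ of Theorem \ref{thrm wqo bountiful} is what is being asserted, with ``$C$ is wqo under the consecutive order'' $\iff$ ``$C$ is finite'', and this is uniform across all bountiful types; the only per-poset input is membership in the bountiful class. There is no case analysis beyond that already carried out in Lemma \ref{lemma bountiful structures}, and no new construction is needed. The main ``obstacle'', such as it is, would only arise if one of the five types failed to be bountiful for some subtle reason — but Lemma \ref{lemma bountiful structures} rules this out by exhibiting, in each case, an explicit pair of extensions $\theta_1,\theta_2$ differing in a single relational fact about the pair of endpoints, so the corollary is genuinely immediate.
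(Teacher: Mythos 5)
Your proposal is correct and matches the paper's own proof, which likewise just observes that all five posets consist of bountiful structures by Lemma \ref{lemma bountiful structures} and then applies Theorem \ref{thrm wqo bountiful}. Your extra remark that bountifulness presupposes validity (via Lemma \ref{lemma valid egs}) is a sensible touch of care but does not change the argument.
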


\begin{proof}
All of these are posets of bountiful structures by Lemma \ref{lemma bountiful structures}, and so the result follows from Theorem \ref{thrm wqo bountiful}.
\end{proof}

\section{Atomicity for Bountiful Structures}
\label{sec atomicity bountiful}

In this section, we turn to the atomicity problem for posets of valid structures and bountiful structures.  Our first theorem gives criteria for an avoidance set of a poset of valid structures to be atomic; this is a generalisation of the results given for $\mc{W}$ and $\mc{P}$ in \cite{mr} and for $\mc{E}$ in \cite{ir}.  Since all bountiful structures are valid, we use this theorem to prove decidability of atomicity for posets of bountiful structures.  

\begin{thrm}
\label{thrm atomicity}
Let $C=\Av(B)$ be \Rev{an infinite, finitely based} avoidance set of a poset of valid structures under the consecutive order.  Then $C$ is atomic if and only if:
\begin{enumerate}
\item $\Gamma_C$ is strongly connected or a bicycle with no ambiguous paths; and
\item For each $\sigma \in C_{[1, b-1]}$ there is $\rho \in C_{b}$ such that $\sigma \leq \rho$.
\end{enumerate}
\end{thrm}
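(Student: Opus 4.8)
\emph{Plan.} The strategy is to pass between structures in $C_{[b,\infty)}$ and paths of $\Gamma_C$ using the maps $\Pi$ and $\Sigma$. The facts that make this work are that validity guarantees every path of $\Gamma_C$ carries a structure (Lemma \ref{lemma validity 2}) and that structures may be amalgamated along a concatenation of paths (Proposition \ref{prop paths}), while Proposition \ref{prop digraphs atomic} says exactly when the paths of $\Gamma_C$ form an atomic poset. I read condition $(1)$ as: either $\Gamma_C$ is strongly connected, or $\Gamma_C$ is a bicycle with no ambiguous paths. Throughout I assume $C_b\neq\varnothing$, i.e.\ $\Gamma_C$ is non-empty; the case $C_b=\varnothing$ (where $C=C_{[1,b-1]}$ is finite) is degenerate.

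\emph{Sufficiency.} Assuming $(1)$ and $(2)$ I verify the JEP for $C$. Given $x,y\in C$, condition $(2)$ lets me replace any of them lying in $C_{[1,b-1]}$ by an element of $C_b$ above it, so I may assume $x,y\in C_{[b,\infty)}$, whence $\Pi(x),\Pi(y)$ are paths of $\Gamma_C$. If $\Gamma_C$ is strongly connected, a path from the last vertex of $\Pi(x)$ to the first vertex of $\Pi(y)$ yields a path $\pi$ of $\Gamma_C$ with $\Pi(x)$ as a prefix and $\Pi(y)$ as a suffix; two applications of Proposition \ref{prop paths} to suitable factorisations of $\pi$ (first to place $y$ inside a structure associated with the suffix, then to amalgamate that with $x$) give $z\in\Sigma(\pi)\subseteq C$ with $x,y\leq z$. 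If instead $\Gamma_C$ is a bicycle with no ambiguous paths, then by Proposition \ref{prop digraphs atomic} its set of paths is atomic, so some path $\pi^{*}$ satisfies $\Pi(x),\Pi(y)\leq\pi^{*}$; choosing $z\in\Sigma(\pi^{*})$ by Lemma \ref{lemma validity 2} and invoking Lemma \ref{lemma 1-1} (available since there are no ambiguous paths) gives $x\leq z$ and $y\leq z$.

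\emph{Necessity.} Now assume $C$ is atomic. For $(2)$: given $\sigma\in C_{[1,b-1]}$, pick $\tau\in C_b$ and a $\mu\in C$ joining $\sigma$ and $\tau$; since $\sigma\leq\mu$ and $|\mu|\geq b>|\sigma|$, the embedded copy of $\sigma$ sits inside a restriction $\rho=\mu\Harpoon{[i,i+b-1]}$, and $\rho\in C_b$ with $\sigma\leq\rho$. For the first half of $(1)$: given paths $\pi_1,\pi_2$ of $\Gamma_C$, choose $\sigma_i\in\Sigma(\pi_i)$ (Lemma \ref{lemma validity 2}) and a $\tau\in C$ joining $\sigma_1,\sigma_2$; then $\pi_i=\Pi(\sigma_i)\leq\Pi(\tau)$ by Proposition \ref{prop substructures and subpaths}, so the set of paths of $\Gamma_C$ has the JEP, hence is atomic, hence by Proposition \ref{prop digraphs atomic} $\Gamma_C$ is strongly connected or a bicycle.

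\emph{The remaining step} — if $\Gamma_C$ is a bicycle that is \emph{not} strongly connected then it has no ambiguous paths — is the one I expect to be the main obstacle, as it is the only place the geometry of bicycles is needed. Suppose $\pi$ is ambiguous with $\sigma_1\neq\sigma_2\in\Sigma(\pi)$ (so $\pi$ has length $\geq 1$). The key observation is that in a bicycle $G_1\to P\to G_2$ that is not a single cycle, every walk uses each edge of the connecting path $P$ at most once, since on leaving an end cycle one cannot return. I extend $\pi$ to a path $\pi^{+}$ that contains $\pi$ as a subpath and uses at least one edge of $P$: if $\pi$ already uses a $P$-edge take $\pi^{+}=\pi$, and otherwise $\pi$ lies inside one of the cycles, from which I route out along $P$ (appending, or prepending, accordingly). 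Using Proposition \ref{prop paths} I pick $\sigma_i'\in\Sigma(\pi^{+})$ realising the chosen copy of $\pi$ as $\sigma_i$; then $\sigma_1'\neq\sigma_2'$ because $\sigma_1\neq\sigma_2$, while $\sigma_1',\sigma_2'\in C$. If some $\mu\in C$ joined them, $\pi^{+}$ would occur as a subpath of $\Pi(\mu)$ in two positions, but its $P$-edge occurs only once in $\Pi(\mu)$, so the two positions coincide and $\sigma_1'$, $\sigma_2'$ are the same restriction of $\mu$ — a contradiction. Hence $\sigma_1'$ and $\sigma_2'$ have no common upper bound and $C$ is not atomic, completing the proof.
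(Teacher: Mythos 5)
Your proof is correct and follows essentially the same route as the paper's: the sufficiency direction via Proposition \ref{prop paths} and Lemma \ref{lemma 1-1}, condition (2) by the same windowing argument, atomicity of the path poset via Proposition \ref{prop substructures and subpaths}, and the final step by extending an ambiguous path across the connecting path of the bicycle and observing that a bridge edge can be traversed at most once. If anything, your version of that last step is slightly more careful than the paper's (which loosely asserts that $\Pi(\theta)$ traverses \emph{every} edge of the extended path twice, where only the bridge edge is needed), but the underlying argument is identical.
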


\begin{proof}
$(\Leftarrow)$ Take $\sigma, \rho \in C$ and extend them to $\sigma^{\prime}, \rho^{\prime} \in C_{b}$ respectively, using (2) if necessary.  

If $\Gamma_C$ is strongly connected, there is a path $\eta$ from $\Pi(\sigma^{\prime})$ to $\Pi(\rho^{\prime})$.  Let $\pi=\Pi(\sigma^{\prime})\hspace{0.3mm}\eta\hspace{0.5mm}\Pi(\rho^{\prime})$.  By Proposition \ref{prop paths}, there exists $\tau \in \Sigma(\pi)$ such that $\sigma^{\prime}, \rho^{\prime} \leq \tau$, so $\sigma, \rho \leq \tau$.  Therefore $C$ satisfies the JEP and so is atomic.

If $\Gamma_C$ is not strongly connected, it is a bicycle with no ambiguous paths.  Since there are no ambiguous paths, by Lemma \ref{lemma 1-1}, $\Gamma_C$ is atomic if and only if $C_{[b, \infty)}$ is atomic.  As $\Gamma_C$ is a bicycle, it is atomic by Proposition \ref{prop digraphs atomic}, meaning that $C_{[b, \infty)}$ is also atomic.  Now, since (2) holds, we can conclude that $C$ is atomic.

$(\Rightarrow)$ Suppose $C$ is atomic.  We will show that (2) holds using the contrapositive.  Assume that there is $\sigma \in C_{[1, b-1]}$ such that $\sigma \nleq \rho$ for all $\rho \in C_b$.  Take $\tau \in C_{[b, \infty)}$.  Since $C$ is atomic, there exists $\theta \in C$ such that $\sigma, \tau \leq \theta$.  Since $\tau \leq \theta$, $|\theta| \geq b$, so there is a subsequence of $\theta$ of length $b$ which contains $\sigma$, yielding a contradiction.

Now we turn to show that (1) holds.  First, suppose that $\Gamma_C$ is not atomic, so there exist paths $\pi, \eta$ which do not join.  Take $\sigma \in \Sigma(\pi)$ and $\rho \in \Sigma(\eta)$.  Since $C$ is atomic, there exists $\theta \in C$ such that $\sigma, \rho \leq \theta$.  Then $\pi, \eta \leq \Pi(\theta)$ by Proposition \ref{prop substructures and subpaths}, a contradiction.  Therefore $\Gamma_C$ must be atomic, so it is strongly connected or a bicycle by Proposition \ref{prop digraphs atomic}.

Suppose that $\Gamma_C$ is a bicycle but is not strongly connected and, aiming for a contradiction, suppose that it has an ambiguous path $\pi$.  \Rev{By adding new vertices to the start or end of $\pi$}, we can extend $\pi$ to an ambiguous path $\pi^{\prime}$ which enters more than one strongly connected component.  Take two distinct structures $\sigma_1, \sigma_2 \in \pi^{\prime}$.  Since $C$ is atomic, there exists $\theta \in C$ such that $\sigma_1, \sigma_2 \leq \theta$, meaning that $\Pi(\sigma_1), \Pi(\sigma_2) \leq \Pi(\theta)$ by Proposition \ref{prop substructures and subpaths}.  Since $\sigma_1, \sigma_2$ are distinct, they cannot embed in $\theta$ via the same embedding.  Therefore, they correspond to different subsets of the points of $\theta$, so their paths are different subpaths of $\Pi(\theta)$.  In other words, $\Pi(\theta)$ has two distinct subpaths which are both isomorphic to $\pi^{\prime}$.  This means that $\Pi(\theta)$ traverses every edge of $\pi^{\prime}$ twice, contradicting the assumption that $\pi^{\prime}$ enters more than one strongly connected component.  Hence $\Gamma_C$ cannot have ambiguous paths.
\end{proof}

\begin{cor}
\label{cor atomic bountiful}
Let $C=\Av(B)$ be \Rev{an infinite, finitely based} avoidance set of a poset of bountiful structures under the consecutive order.  Then $C$ is atomic if and only if:
\begin{enumerate}
\item $\Gamma_C$ is strongly connected; and
\item For each $\sigma \in C_{[1, b-1]}$ there is $\rho \in C_{b}$ such that $\sigma \leq \rho$.
\end{enumerate}
\end{cor}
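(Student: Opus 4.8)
The plan is to derive Corollary \ref{cor atomic bountiful} from Theorem \ref{thrm atomicity} by showing that, for posets of bountiful structures, condition (1) of the theorem simplifies to the single requirement that $\Gamma_C$ be strongly connected. Since bountiful structures are valid, Theorem \ref{thrm atomicity} applies verbatim, so the only work is to reconcile the two versions of condition (1); condition (2) is identical in both statements and carries over with no change.

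First I would recall that condition (1) of Theorem \ref{thrm atomicity} says that $\Gamma_C$ is strongly connected, or a bicycle with no ambiguous paths. So it suffices to rule out the second alternative in the bountiful setting, except in the degenerate case where a bicycle happens already to be strongly connected. The key observation is Lemma \ref{lemma tfae}(2): in a factor graph of a poset of bountiful structures, \emph{every} path of length $\geq 1$ is ambiguous. Thus if $\Gamma_C$ contains any edge at all, it contains an ambiguous path, and so the clause ``bicycle with no ambiguous paths'' can only be satisfied by a factor graph with no edges — that is, a collection of isolated vertices.

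Next I would handle that edgeless case. If $\Gamma_C$ has no edges, then it is strongly connected if and only if it has exactly one vertex (a single vertex is vacuously strongly connected), and an edgeless graph on one vertex is indeed a bicycle (with both cycles and the connecting path absent, in the terminology of the bicycle definition — or one should check the convention, but a single vertex with no loop is covered). So in the edgeless case the two formulations of condition (1) agree: either $|C_b| = 1$ and $\Gamma_C$ is the one-vertex graph, which is both strongly connected and a (trivial) bicycle with no ambiguous paths, or $|C_b| \geq 2$ and $\Gamma_C$ is disconnected, failing both formulations. In every case where $\Gamma_C$ has an edge, the ``bicycle with no ambiguous paths'' option is impossible by Lemma \ref{lemma tfae}(2), so condition (1) of Theorem \ref{thrm atomicity} holds if and only if $\Gamma_C$ is strongly connected. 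Substituting this equivalence into Theorem \ref{thrm atomicity} yields exactly the statement of the corollary.

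The main obstacle, such as it is, is purely bookkeeping around the degenerate one-vertex graph: one must make sure that the definition of bicycle genuinely includes the single-vertex, edgeless graph (the definition does allow both cycles and the path to be absent, so it does), and that ``strongly connected'' is interpreted so that a one-vertex graph qualifies (it does, vacuously). There is no real mathematical difficulty beyond invoking Lemma \ref{lemma tfae}(2) to kill ambiguous-path-free bicycles with edges; the rest is a direct quotation of Theorem \ref{thrm atomicity}. I would therefore keep the proof to a few lines: cite that bountiful implies valid, invoke Theorem \ref{thrm atomicity}, use Lemma \ref{lemma tfae}(2) to collapse condition (1), and note that condition (2) is unchanged.
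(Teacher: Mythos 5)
Your proposal is correct and follows essentially the same route as the paper: both deduce the corollary from Theorem \ref{thrm atomicity} by using the ambiguity of paths in factor graphs of bountiful structures (Lemma \ref{lemma tfae}) to eliminate the ``bicycle with no ambiguous paths'' alternative in condition (1). Your treatment is in fact slightly more careful than the paper's, which asserts that \emph{all} paths are ambiguous rather than only those of length $\geq 1$ and thus silently skips the edgeless one-vertex case that you check explicitly.
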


\begin{proof}
$(\Rightarrow)$ Since bountiful posets are valid, if $C$ is atomic then by Theorem \ref{thrm atomicity}, $(2)$ holds and $\Gamma_{\Rev{C}}$ is strongly connected or a bicycle with no ambiguous paths.  Since $C$ is bountiful, all paths in $\Gamma_{\Rev{C}}$ are ambiguous, so $\Gamma_{\Rev{C}}$ must be strongly connected, giving $(1)$.

$(\Leftarrow)$ This is given by Theorem \ref{thrm atomicity}.
\end{proof}

\begin{cor}
The atomicity problem is decidable for posets of bountiful structures under the consecutive order.
\end{cor}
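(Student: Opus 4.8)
The plan is to deduce this immediately from the characterisation of atomicity in Corollary \ref{cor atomic bountiful}: for an avoidance set $C=\Av(B)$ of a poset of bountiful structures, atomicity is equivalent to the conjunction of (1) $\Gamma_C$ being strongly connected and (2) every structure in $C_{[1,b-1]}$ embedding into some structure in $C_b$. Both conditions are decidable once $B$ (and hence $b$) is given, and the proof just needs to make this effectivity explicit.

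First I would observe that the $b$-dimensional factor graph $\Gamma_C$ can be constructed algorithmically from $B$. Its vertex set is $C_b$, the set of structures of length $b$ of the given type which avoid every element of $B$; since there are only finitely many structures of the relevant signature on the underlying set $[1,b]$, we can enumerate them all and discard those containing some $\beta\in B$ as a consecutive substructure (a finite check, as $|\beta|\le b$). The edges are then determined by the condition $u\Harpoon{[2,b]}\cong v\Harpoon{[1,b-1]}$, again a finite check for each ordered pair of vertices. Thus $\Gamma_C$ is a finite digraph that we can write down explicitly.

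Next, checking condition (1) is just a strong-connectivity test on the finite digraph $\Gamma_C$, which is decidable by standard graph algorithms (for instance two breadth-first searches from a fixed vertex, one in $\Gamma_C$ and one in its reverse). Checking condition (2) is also a finite task: $C_{[1,b-1]}=\bigcup_{k=1}^{b-1}C_k$ and $C_b$ are finite, effectively computable sets by the same enumeration as above, and for each $\sigma\in C_{[1,b-1]}$ and each $\rho\in C_b$ the relation $\sigma\le\rho$ under the consecutive order can be decided directly from the definition (one tests the at most $|\rho|$ contiguous maps). Running through all such pairs decides (2). Combining these, given $B$ we can decide whether both (1) and (2) hold, and by Corollary \ref{cor atomic bountiful} this is exactly whether $C=\Av(B)$ is atomic; hence the atomicity problem is decidable for posets of bountiful structures.

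There is no real obstacle here — the mathematical content is entirely contained in Corollary \ref{cor atomic bountiful}, and what remains is routine. The only point requiring a moment's care is that the construction of $\Gamma_C$ terminates, which rests on there being only finitely many structures of the given type on any fixed finite underlying set; this holds for all the bountiful types in Lemma \ref{lemma bountiful structures} (graphs, simple graphs, digraphs, tournaments, and relational structures with a fixed finite signature) and is in any case implicitly used throughout this section whenever $\Gamma_C$ is treated as a finite digraph.
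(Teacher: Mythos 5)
Your proposal is correct and follows exactly the same route as the paper: both deduce decidability from Corollary \ref{cor atomic bountiful} by noting that $\Gamma_C$ is a finite, effectively constructible digraph on which strong connectivity can be tested, and that condition (2) is a finite check over $C_{[1,b-1]}\times C_b$. Your version simply spells out the effectivity details (enumeration of structures on $[1,b]$, testing contiguous maps) that the paper leaves implicit.
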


\begin{proof}
\Rev{It is easy to check whether a finitely based avoidance set is finite, as we can just check whether its factor graph contains cycles.  For the finite avoidance sets, we can algorithmically check for atomicity.  For infinite avoidance sets,} consider the conditions of Corollary \ref{cor atomic bountiful}. It is easy construct $\Gamma_C$ and check whether it is strongly connected.  For the second condition, there are only finitely many elements in $C_{[1, b-1]}$ and finitely many elements in $C_b$ that they could be contained in.  So both conditions are decidable, giving the result.
\end{proof}

\begin{cor}
\label{cor atomic bountiful egs}
In of each of the following posets, \Rev{an infinite, finitely based} avoidance set $C$ is atomic if and only if its factor graph is strongly connected and for each $\sigma \in C_{[1, b-1]}$ there is $\rho \in C_{b}$ such that $\sigma \leq \rho$.
\begin{center}
(1) $\mathcal{G}$; \hspace{1.5cm} (2) $\mathcal{S}$; \hspace{1.5cm} (3) $\mathcal{D}$;\\
\hspace{-2.4cm}(4) $\mathcal{T}$; \hspace{1.5cm} (5) $\mathcal{R}_{\sigma}$.
\end{center}
\end{cor}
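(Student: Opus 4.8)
The plan is to observe that this corollary is an immediate instance of Corollary \ref{cor atomic bountiful}, so the only work is to verify that each of the five posets $\mathcal{G}$, $\mathcal{S}$, $\mathcal{D}$, $\mathcal{T}$, $\mathcal{R}_\sigma$ is a poset of bountiful structures. This is exactly the content of Lemma \ref{lemma bountiful structures}, which lists graphs, simple graphs, digraphs, tournaments, and relational structures with signature $\sigma$ as bountiful. So first I would invoke Lemma \ref{lemma bountiful structures} to conclude that an arbitrary avoidance set $C = \Av(B)$ in any of these posets is an avoidance set of a poset of bountiful structures under the consecutive order.

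Having established that, I would simply apply Corollary \ref{cor atomic bountiful} to $C$: it states that such a $C$ is atomic if and only if $\Gamma_C$ is strongly connected and, for each $\sigma \in C_{[1, b-1]}$, there is some $\rho \in C_b$ with $\sigma \leq \rho$, where $b$ is the maximum length of an element of $B$. This is precisely the claimed characterisation, so the proof is complete in one line once the bountifulness of the five structure types is cited.

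There is essentially no obstacle here; the substantive mathematical content lives in Lemma \ref{lemma bountiful structures} (which exhibits the two witnessing extensions $\theta_1, \theta_2$ in each case, typically by toggling a single edge or relation involving the first and last points) and in Corollary \ref{cor atomic bountiful} / Theorem \ref{thrm atomicity} (which relate atomicity of $C$ to strong connectedness of the factor graph, using that all non-trivial paths are ambiguous by Lemma \ref{lemma tfae}). The only point worth a brief remark is that bountiful structures are valid by Definition \ref{defn bountiful} itself (it is stated in terms of valid structures), so Corollary \ref{cor atomic bountiful} applies without any extra check.

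\begin{proof}
Each of $\mathcal{G}$, $\mathcal{S}$, $\mathcal{D}$, $\mathcal{T}$ and $\mathcal{R}_\sigma$ is a poset of bountiful structures by Lemma \ref{lemma bountiful structures}. The result is therefore immediate from Corollary \ref{cor atomic bountiful}.
\end{proof}
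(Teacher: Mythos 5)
Your proof is correct and is essentially identical to the paper's: both cite Lemma \ref{lemma bountiful structures} to establish that the five structure types are bountiful, and then apply Corollary \ref{cor atomic bountiful} directly. No further comment is needed.
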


\begin{proof}
By Lemma \ref{lemma bountiful structures}, all of these are posets of bountiful structures, so the result follows from Corollary \ref{cor atomic bountiful}.
\end{proof}

Since Theorem \ref{thrm atomicity} was stated for posets of valid structures, it also has the following consequence beyond bountiful structures.

\begin{cor}
\label{cor nearly there}
Let $(X, \leq)$ be a poset of valid structures under the consecutive order.  If it is decidable whether a given bicycle in a factor graph of $(X, \leq)$ contains ambiguous paths, then the atomicity problem is decidable for $(X, \leq)$. 
\end{cor}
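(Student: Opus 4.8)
The plan is to reduce the atomicity problem directly to Theorem \ref{thrm atomicity}. Given a finite set $B$, let $b$ be the maximum length of an element of $B$ and put $C = \Av(B)$. By Theorem \ref{thrm atomicity}, $C$ is atomic if and only if (1) $\Gamma_C$ is strongly connected or a bicycle with no ambiguous paths, and (2) every $\sigma \in C_{[1, b-1]}$ embeds in some $\rho \in C_b$. So it suffices to show that, granted the hypothesis on bicycles, each of these two conditions is decidable; since $C$ is atomic exactly when both hold, decidability of the atomicity problem follows.

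First I would note that $\Gamma_C$ is effectively constructible: its vertex set $C_b$ is obtained by listing the finitely many structures of the relevant type on $b$ points and discarding those that contain an element of $B$, after which the edges are determined by the restriction-isomorphism condition, itself a finite check. With $\Gamma_C$ in hand, strong connectedness is decided by a standard reachability computation, and whether $\Gamma_C$ is a bicycle is a purely finite combinatorial test (search over the finitely many ways of choosing two vertex-disjoint simple cycles and a simple connecting path, then check that their union is all of $\Gamma_C$). If $\Gamma_C$ is neither strongly connected nor a bicycle, condition (1) fails and $C$ is not atomic; if $\Gamma_C$ is strongly connected, condition (1) holds. In the remaining case, $\Gamma_C$ is a bicycle but not strongly connected, and here condition (1) holds precisely when $\Gamma_C$ has no ambiguous paths. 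Since $\Gamma_C$ is a factor graph of $(X, \leq)$ — namely the $b$-dimensional factor graph of $C$ — this last question is decidable by the hypothesis of the corollary. For condition (2), both $C_{[1, b-1]}$ and $C_b$ are finite and effectively listable as above, and for each pair $(\sigma, \rho)$ the relation $\sigma \leq \rho$ is decidable: a contiguous map is determined by the image of the smallest point, so there are only $|\rho| - |\sigma| + 1$ candidates, and for each the required isomorphism of induced substructures is a finite check. Hence condition (2) is decidable, and combining the two cases gives an algorithm deciding atomicity of $\Av(B)$.

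The main obstacle is, by design, isolated in the hypothesis — detecting ambiguous paths in a bicycle — which is why the statement is conditional; everything else is routine bookkeeping. Two points deserve care in the write-up: the case analysis should be arranged so that the bicycle-ambiguity oracle is invoked only when $\Gamma_C$ actually is a non-strongly-connected bicycle (when $\Gamma_C$ is strongly connected, ambiguity need never be considered), and the argument tacitly relies on the structures of the type under consideration being effectively enumerable by length with a decidable consecutive order — which holds for all the concrete structure types treated in this paper.
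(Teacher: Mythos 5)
Your proposal is correct and follows essentially the same route as the paper: reduce to the two conditions of Theorem \ref{thrm atomicity}, observe that constructing $\Gamma_C$ and testing strong connectedness, bicyclehood, and condition (2) are all finite checks, and invoke the hypothesis only when $\Gamma_C$ is a bicycle. The paper's own proof is just a terser version of this argument; your additional remarks on effective enumerability and on when the ambiguity oracle is actually needed are sensible elaborations rather than deviations.
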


\begin{proof}
\Rev{As in the proof of Corollary \ref{cor atomic bountiful}, it is decidable whether an avoidance set is finite, and whether a finite avoidance set is atomic.  For the infinite avoidance sets,} it is easy to see that condition $(2)$ of Theorem \ref{thrm atomicity} is decidable.  For condition $(1)$, it is decidable whether the factor graph is strongly connected or a bicycle.  If it is a bicycle, by assumption, it is decidable whether it contains ambiguous paths.  Hence, it is decidable whether $(X, \leq)$ is atomic.
\end{proof}

\section{Words under the consecutive order}
\label{sec words cons}

In the remaining sections we will give an example of invalid structures -- permutations consisting of at most two ascents.  We will answer both the wqo and atomicity problems for these structures under the consecutive order.  This will involve relating these structures to certain posets of words under the consecutive order, and making use of McDevitt and Ru\v{s}kuc's solutions to the wqo and atomicity problems for words \cite{mr}.  The purpose of this section is to introduce the necessary ideas and results on words under the consecutive order, equipping us to tackle this example of an invalid type of structure.

First, we will establish how to view words as relational structures, and hence how they fit into the general framework we have established for considering consecutive orders.  A word $w$ of length $n$ over an alphabet $A=\{a_1 \dots a_m\}$ may be described as a relational structure on a set $X$ of size $n$, together with a linear order $\leq_w$, and a family of $m$ unary relations $u_1, \dots, u_{m}$.  The elements of $X$ represent the letters of $w$.  The linear order $\leq_w$ dictates the order of the letters, with $x \leq_w y$ if and only if $x$ appears to the left of $y$ in $w$.  Since we take $X \subseteq \mathbb{N}$, this linear order will be the natural one inherited from $\mathbb{N}$.  The unary relations $u_1, \dots, u_{|A|}$ indicate the letter of $A$ taken by each element of $X$: each $x \in X$ occurs in precisely one $u_i$, meaning that the letter $x = a_i$.  For example, if $A=\{a, b\}$, the word $w=abba$ can be viewed as a relational structure $(X, \leq_w, u_a, u_b)$.  Here: $X=[1, 4]$; $1 \leq_w 2 \leq_w 3 \leq_w 4$; $1, 4\in u_a$ (as the first and fourth letters are $a$); and $2, 3 \in u_b$.  

Since words have a linear order in their signature, the consecutive order is defined with respect to this linear order.

\begin{defn}
\label{defn cons subword order} 
Let $A=\{a_1 \dots a_m\}$ be an alphabet and take $w$ and $v$ to be two words over $A$, where $w = (X, \leq_w, u_1, \dots,  u_m)$ and $v=(Y, \leq_v, h_1, \dots, h_m)$.  Then $w \leq v$ under the \emph{consecutive order} if and only if there exists a contiguous mapping $f:X \rightarrow Y$ with respect to $\leq_w$ and $\leq_v$ such that $X \cong Y \Harpoon{f(X)}$.
\end{defn}

\begin{ex}
Let $A=\{a, b\}$, then $w=abba \leq baabba=v$.  To see this, we wish to embed $abba$ in the last four letters of $baabba$.  Formally, $abba = (X, \leq_w, u_a, u_b)$, as we saw earlier, and similarly $baabba = (Y, \leq_v, h_a, h_b)$, where $Y=[1, 6]$, $\leq_v$ is the natural order on $Y$, $1, 4, 5 \in h_b$ and $2, 3, 6 \in h_a$.  The required contiguous mapping $f:X \rightarrow Y$ is $f(x)=x+2$.  To see that this embeds $abba$ in $baabba$, observe that $x \in u_i$ if and only if $f(x) \in h_i$ (for $i=a, b$), indicating that $f$ preserves the value of each letter.  
\end{ex}

It can be seen that Definition \ref{defn cons subword order} mirrors the more common definition of consecutive subwords, which states that $u = u_1 \dots u_n \leq v = v_1 \dots v_m$ if and only if $u_1 \dots u_n = v_kv_{k+1} \dots v_{k+n-1}$ for some $k \in [1, m]$.  From now on, we will tend to think of consecutive subwords using this less formal definition.

We are now ready to recount the necessary results from \cite{mr}, which explore the connection between words in avoidance sets and their paths in factor graphs, building to answer the wqo and atomicity problems for $\mc{W}_A$.

\begin{prop}
\label{prop words unamb}
Let $C=\Av(B)$ be an avoidance set of $\mc{W}_A$.  Then $\Gamma_C$ contains no ambiguous paths. \qed
\end{prop}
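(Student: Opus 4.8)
The plan is to show directly that if $\pi$ is a path in $\Gamma_C$, then $|\Sigma(\pi)| \leq 1$, i.e. $\pi$ determines at most one word in $C_{[b,\infty)}$. The key observation is that, for words, a vertex of $\Gamma_C$ is a factor of length $b$, and such a factor records not only the ``shape'' of the overlap but the actual letters occupying each of its $b$ positions. Suppose $\pi = v_0 \rightarrow v_1 \rightarrow \dots \rightarrow v_{n-b}$ and that $w, w' \in \Sigma(\pi)$. Then both $w$ and $w'$ have length $n$, and by definition of $\Pi$ we have $w\Harpoon{[i+1,\, i+b]} \cong v_i \cong w'\Harpoon{[i+1,\, i+b]}$ for every $i \in [0, n-b]$. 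I would then argue position by position: every position $j \in [1, n]$ lies in the window $[i+1, i+b]$ for some valid $i$ (for instance $i = \max(0, j-b)$, noting $i \leq n-b$ since $j \leq n$), and within that window the letter in position $j$ is dictated by the corresponding unary relation of $v_i$. Hence $w$ and $w'$ agree in position $j$ for every $j$, so $w = w'$, and $\pi$ is not ambiguous.

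The only mild subtlety — and the step I expect to require the most care — is checking that the window index $i$ I choose for a given position $j$ is genuinely in range, i.e. that $0 \leq i \leq n-b$, and that the ``letter in position $j$ of $w$'' is recovered from $v_i$ correctly under the isomorphism $w\Harpoon{[i+1,i+b]} \cong v_i$ (which relabels $\{i+1,\dots,i+b\}$ to $[1,b]$, so position $j$ of $w$ corresponds to position $j-i$ of $v_i$). Since a word's unary relations are preserved by $\cong$, the letter is indeed determined. An alternative, perhaps cleaner, phrasing is by induction on $n$: the path $\pi$ without its last edge determines $w\Harpoon{[1,n-1]}$ uniquely by induction, and the last vertex $v_{n-b}$ then determines the final letter, since $v_{n-b} \cong w\Harpoon{[n-b+1, n]}$ fixes the letter in position $n$. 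Either route is short; there is no real obstacle here beyond bookkeeping with the index shifts, which is why the statement follows essentially from the definitions.
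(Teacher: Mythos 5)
Your proof is correct and is exactly the standard argument: the paper itself omits the proof (quoting the result from McDevitt and Ru\v{s}kuc), and the intended justification is precisely that for words each vertex of $\Gamma_C$ records the actual letters of a length-$b$ window, so a path determines every letter of any associated word and hence $|\Sigma(\pi)| \leq 1$. Your handling of the index bookkeeping and the observation that $\cong$ preserves the unary relations are both sound, and the inductive rephrasing you sketch is an equally valid alternative.
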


Since words are valid by Lemma \ref{lemma valid egs}, paths in their factor graphs are always associated with at least one structure by Lemma \ref{lemma validity 2}.  Combining this with Proposition \ref{prop words unamb} gives the following consequence:

\begin{prop}
For any path $\pi$ in $\Gamma_C$, $\Sigma(\pi)$ has size one.  \qed
\end{prop}

We will abuse notation and denote the single element of $\Sigma(\pi)$ by $\Sigma(\pi)$.  

We may also combine Proposition \ref{prop words unamb} with Lemma \ref{lemma 1-1} to obtain the following result.

\begin{prop}
\label{prop words 1-1}
Take $C=\Av(B)$ to be an avoidance set of words under the consecutive order.  If $u, v \in C_{[b, \infty)}$ then $u \leq v$ if and only if $\Pi(u) \leq \Pi(v)$ under the subpath order in $\Gamma_B$. \qed
\end{prop}

From here, the wqo and atomicity problems for words reduce to those for paths under the subpath order.  The next two results follow from Propositions \ref{prop digraphs wqo}, \ref{prop digraphs atomic}, and \ref{prop words 1-1}.

\begin{prop}
\label{prop wqo words}
An avoidance set $C$ of $\mc{W}_A$ is wqo if and only if $\Gamma_C$ contains no in-out cycles.\qed
\end{prop}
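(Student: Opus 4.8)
The plan is to combine the two cornerstone facts already assembled in this section. First, observe that by Proposition~\ref{prop words 1-1}, the restriction of the poset $C$ to words of length at least $b$ is order-isomorphic (as a poset under the consecutive order) to the set of paths of $\Gamma_C$ under the subpath order, via the map $u \mapsto \Pi(u)$, whose inverse is $\pi \mapsto \Sigma(\pi)$. Thus the key observation is that $C_{[b,\infty)}$ is wqo if and only if the set of paths of $\Gamma_C$ is wqo under the subpath order, and the latter is governed by Proposition~\ref{prop digraphs wqo}, which says it holds precisely when $\Gamma_C$ contains no in-out cycles.

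The only gap to bridge is passing between $C$ and $C_{[b,\infty)}$: the finitely many short words in $C_{[1,b-1]}$ are not captured by the factor graph. For the forward direction, if $C$ is wqo then so is its subset $C_{[b,\infty)}$, hence the set of paths of $\Gamma_C$ is wqo, hence by Proposition~\ref{prop digraphs wqo} there are no in-out cycles. For the reverse direction, suppose $\Gamma_C$ has no in-out cycles; then the set of paths is wqo, so $C_{[b,\infty)}$ is wqo. Now $C = C_{[1,b-1]} \cup C_{[b,\infty)}$, and $C_{[1,b-1]}$ is finite, hence wqo; a finite union of wqo posets is wqo (an infinite antichain in the union would have infinitely many elements in one of the two pieces, contradicting wqo of that piece — here one should be slightly careful and note that even cross-comparisons between short and long words only help, so any antichain in $C$ restricts to an antichain in each piece). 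Therefore $C$ is wqo.

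I do not expect any genuine obstacle here: both halves of the argument are short reductions to Propositions~\ref{prop words 1-1} and \ref{prop digraphs wqo}, and the ``finite union of wqo is wqo'' step is entirely routine. The one point deserving a sentence of care is the handling of the short words $C_{[1,b-1]}$ in the reverse direction, since these lie outside the scope of the factor graph; but as noted this is immediate because the union is finite. One could also phrase the whole thing more slickly by remarking that $C$ is wqo if and only if $C_{[b,\infty)}$ is, since they differ by a finite set, and then invoke Propositions~\ref{prop words 1-1} and \ref{prop digraphs wqo} in one line.
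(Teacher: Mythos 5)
Your proposal is correct and follows essentially the same route the paper intends: the paper states that this result follows directly from Propositions \ref{prop words 1-1} and \ref{prop digraphs wqo}, which is precisely the reduction you carry out, and your extra care with the finitely many words in $C_{[1,b-1]}$ is a routine but valid completion of that argument.
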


\begin{prop}
\label{prop atomicity words}
An \Rev{infinite} avoidance set $C$ of $\mc{W}_A$ is atomic if and only if the following hold:
\begin{enumerate}
\item $\Gamma_C$ is strongly connected or a bicycle; and
\item For every word $w \in C_{[1, b-1]}$ there is a word $u \in C_b$ such that $w \leq u$.\qed 
\end{enumerate}
\end{prop}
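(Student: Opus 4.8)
The quickest route is to note that words are valid (Lemma \ref{lemma valid egs}) and that $\Gamma_C$ has no ambiguous paths (Proposition \ref{prop words unamb}), so the statement is precisely the specialisation of Theorem \ref{thrm atomicity} to $\mc{W}_A$: condition $(1)$ there --- ``$\Gamma_C$ is strongly connected or a bicycle with no ambiguous paths'' --- collapses to ``$\Gamma_C$ is strongly connected or a bicycle'' since the ambiguity clause is vacuous, and condition $(2)$ is unchanged. For the purposes of this section, though, I would instead give a self-contained derivation from Propositions \ref{prop digraphs atomic} and \ref{prop words 1-1}, as in \cite{mr}.

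The heart of the argument is the following correspondence. Restricted to $C_{[b,\infty)}$, the map $\Pi$ is a bijection onto the set of paths of $\Gamma_C$: it is injective because there are no ambiguous paths, and surjective because every path $\pi$ has $\Sigma(\pi) \neq \varnothing$ by Lemma \ref{lemma validity 2}. By Proposition \ref{prop words 1-1} this bijection is an order isomorphism between $C_{[b,\infty)}$ (with the induced order) and the poset of paths of $\Gamma_C$ under the subpath order. Hence $C_{[b,\infty)}$ satisfies the JEP if and only if the poset of paths does, which by Proposition \ref{prop digraphs atomic} holds if and only if $\Gamma_C$ is strongly connected or a bicycle.

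It remains to relate atomicity of $C$ to atomicity of $C_{[b,\infty)}$ together with $(2)$. For $(\Leftarrow)$: assuming $(1)$ (so $C_{[b,\infty)}$ has the JEP) and $(2)$, take $u,v \in C$; replace each word of length less than $b$ by a word of $C_b$ lying above it (using $(2)$), obtaining $u',v' \in C_{[b,\infty)}$ with $u \le u'$ and $v \le v'$; any word of $C_{[b,\infty)}$ joining $u'$ and $v'$ also joins $u$ and $v$. For $(\Rightarrow)$: if $C$ is atomic then any $\theta \in C$ joining two words of $C_{[b,\infty)}$ has length at least $b$, so $C_{[b,\infty)}$ inherits the JEP and $(1)$ follows from the previous paragraph; and $(2)$ holds by the contrapositive --- a word $w \in C_{[1,b)}$ lying below no member of $C_b$ cannot be jointly embedded with a sufficiently long word of $C$, since in a common upper bound $\theta$ (necessarily of length $\ge b$) the occurrence of $w$ lies inside a length-$b$ consecutive window, which would be a member of $C_b$ above $w$.

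There is no real obstacle here: the content is carried entirely by Propositions \ref{prop digraphs atomic} and \ref{prop words 1-1}, and the remaining work is bookkeeping --- verifying that $\Pi$ is onto paths, running the window argument for $(2)$, and disposing of the degenerate case in which $C$ is finite (where $C_b = \varnothing$, forcing $C \subseteq C_{[1,b)}$, and one checks that both sides of the equivalence fail unless $C$ reduces to a single word).
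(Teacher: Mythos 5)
Your proposal is correct and matches the paper's approach: the paper states this result without proof precisely because it is the specialisation of Theorem \ref{thrm atomicity} to $\mc{W}_A$ (with the ambiguity clause made vacuous by Proposition \ref{prop words unamb}), and equivalently because it follows from Propositions \ref{prop digraphs atomic} and \ref{prop words 1-1} as in \cite{mr} --- both routes you give. Your additional bookkeeping (bijectivity of $\Pi$ on $C_{[b,\infty)}$, the window argument for condition (2)) is sound and consistent with the proof of Theorem \ref{thrm atomicity}.
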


Note that Proposition \ref{prop atomicity words} is the special case of Theorem \ref{thrm atomicity} for $\mc{W}_A$.  Since factor graphs of words contain no ambiguous paths by Proposition \ref{prop words unamb}, the first criterion of Theorem \ref{thrm atomicity} reduces to $\Gamma_C$ being strongly connected or a bicycle here.

The following proposition is Theorem 1.1 from \cite{mr} and follows almost immediately from Propositions \ref{prop wqo words} and \ref{prop atomicity words}.

\begin{prop}
\label{prop words}
It is decidable whether a finitely based avoidance set of words under the consecutive order is atomic or wqo. \qed
\end{prop}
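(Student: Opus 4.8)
The plan is to reduce the decidability claim for avoidance sets of $\mc{W}_A$ entirely to elementary graph-theoretic checks on the factor graph $\Gamma_C$, relying on Propositions \ref{prop wqo words} and \ref{prop atomicity words}. First I would observe that $\Gamma_C$ is an effectively constructible finite digraph: given the finite basis $B$, one computes $b$, enumerates all words over $A$ of length $b$ that avoid every element of $B$ (this is a finite search since $|A|$ is fixed), and for each such word tests membership in $C_b$; the edges are determined by the condition $u\Harpoon{[2,b]}\cong v\Harpoon{[1,b-1]}$, which is again a finite check. So $\Gamma_C$ can be built in finite time.

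Next I would handle the wqo problem. By Proposition \ref{prop wqo words}, $C$ is wqo if and only if $\Gamma_C$ contains no in-out cycle. Whether a finite digraph contains a cycle with a vertex of in-degree $>1$ and a vertex of out-degree $>1$ (both of which can be read off $\Gamma_C$ directly, and cycles through a given vertex are detectable by standard reachability) is decidable by a routine finite search over the finitely many simple cycles and vertices. Hence wqo is decidable.

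For the atomicity problem, I would invoke Proposition \ref{prop atomicity words}. Condition (1) requires checking whether $\Gamma_C$ is strongly connected (a standard linear-time graph algorithm) or, failing that, whether it is a bicycle (two vertex-disjoint simple cycles joined by a simple path with disjoint internal vertices, or a degenerate form thereof — a finite structural check on the finite graph). Condition (2) requires, for each of the finitely many words $w \in C_{[1,b)}$, verifying the existence of some $u \in C_b$ with $w \le u$; since both $C_{[1,b)}$ and $C_b$ are finite and the relation $w\le u$ is decidable, this is a finite search. Combining, atomicity is decidable.

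The main obstacle — really the only nontrivial point — is making sure that each of the graph properties invoked (absence of in-out cycles, strong connectivity, being a bicycle) is genuinely algorithmically checkable on $\Gamma_C$, and that $\Gamma_C$ itself is effectively computable from $B$. Once this is granted, the statement follows immediately from Propositions \ref{prop wqo words} and \ref{prop atomicity words}, so the proof is essentially a remark assembling these pieces; I would expect the authors to dispatch it in a couple of lines.
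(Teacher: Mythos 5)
Your proposal is correct and follows exactly the route the paper takes: the paper states that this result (Theorem 1.1 of \cite{mr}) follows almost immediately from Propositions \ref{prop wqo words} and \ref{prop atomicity words}, and your write-up simply makes explicit the effective constructibility of $\Gamma_C$ and the routine finiteness of the graph-theoretic and containment checks. Nothing is missing.
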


\section{An invalid example: double ascents}
\label{sec double ascents}

In this section we will introduce an example of invalid \Rev{structures}: permutations with at most one inversion between consecutive entries.  These will be referred to as \emph{double ascents}.   

We will consider the poset of double ascents under the consecutive order.  Unlike in previous work, such as \cite{mr}, we will define the consecutive order for permutations in terms of their `vertical' or value linear order rather than their `horizontal' or position linear order.  For clarity, we give the formal definition of this consecutive order below.  We take this approach to make the notation consistent with the consecutive orders on other structures we have considered.  However, it turns out that the posets of permutations under the consecutive orderings defined with respect to the two linear orders are isomorphic, where the isomorphism maps each permutation to its inverse.  Therefore, our change of viewpoint does not prevent the results of \cite{mr} from fitting into our overall framework for valid structures under consecutive orders.  

Any sequence of distinct numbers $\tau$ can be reduced to a permutation by changing the smallest number into a 1, the second smallest into a 2, and so on.  As for other structures we have considered, we will say that two such sequences of numbers $\tau, \eta$ are \emph{isomorphic} if they can be reduced to the same permutation, and in this case write $\tau \cong \eta$.  We will also denote the restriction of a sequence of numbers $\tau=\tau_1\dots \tau_n$ to the points in a set $X$ by $\tau\Harpoon{X}$.  For example, $162534 \Harpoon{[2, 5]}=2534 \cong 1423$. 

As relational structures, permutations are sets with two linear orders: the `horizontal' or position linear order $\leq_p$ and the `vertical' or value linear order $\leq_v$.  For example, the permutation 15423 is a relational structure consisting of the set $[1, 5]$ with two linear orders given by $1 \leq_v 2 \leq_v 3 \leq_v 4 \leq_v 5$ and $1 \leq_p 5 \leq_p 4 \leq_p 2 \leq_p 3$.  

\begin{defn}
\Rev{An \emph{adjacent inversion}} in a permutation $\sigma=\sigma_1\dots \sigma_n$ is a pair $(\sigma_i, \sigma_{i+1})$ of consecutive entries such that $\sigma_{i+1} \leq_v \sigma_{i}$. 
\end{defn}

\begin{defn}
A permutation $\sigma=\sigma_1\dots \sigma_n$ is a \emph{double ascent} if it contains at most one adjacent inversion.  It is a \emph{single ascent} if it contains no adjacent inversions.   
\end{defn}

We will refer to increasing permutations as \emph{ascents}, so single and double ascents are permutations containing \Rev{no ascents and at most 1 ascent} as consecutive subpermutations respectively.

In previous work (e.g. \cite{mr}), the consecutive order for permutations has been defined with respect to $\leq_p$; here we will define it with respect to $\leq_v$ instead.

\begin{defn}
Let $\sigma=\sigma_1\dots \sigma_n$ and $\rho=\rho_1\dots \rho_m$ be permutations.  Then $\sigma \leq \rho$ under the \emph{consecutive order} if and only if there exists a contiguous map $f:[1,n] \rightarrow [1,m]$ with respect to $\leq_v$ such that $\sigma \cong \rho \Harpoon{im(f)}$. 
\end{defn}

\begin{ex}
$213 \cong 324 \leq 135246$ under the consecutive order \Rev{(which is defined with respect to the value linear order, rather than the more usual position order, as discussed above)}.  Here the underlying contiguous mapping with respect to $\leq_v$ is $f: [1, 3] \rightarrow [1, 6]$ defined by $f(x) = x+1$.  
\end{ex}

Since double ascents are permutations, the consecutive order for them is inherited from $\mc{P}$; let $\mc{DA}$ be the poset of double ascents with this consecutive order.

\begin{prop}
Double ascents are  not valid structures. 
\end{prop}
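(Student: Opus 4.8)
The plan is to exhibit two double ascents $\sigma$ and $\rho$ that overlap but cannot be combined into any double ascent, thereby contradicting the definition of validity directly. The key point is that a double ascent has a bounded number of consecutive inversions (at most one), so if we choose $\sigma$ to contain a consecutive inversion near its end and $\rho$ to contain a consecutive inversion near its start, the overlap region is forced to behave in a way that, once the two pieces are glued together, either creates a second inversion, or forces a value constraint that cannot be satisfied. I would look for the smallest such example: overlapping on a single point is the natural first attempt, since overlapping on one point puts no structural constraint on the overlap beyond matching that single value's relative position, giving the most freedom to force a contradiction.

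Concretely, I would take $\sigma$ to be a short double ascent ending with its (unique) consecutive inversion — for instance something like $\sigma = 132$ (with the inversion in the last pair $3,2$) — and $\rho$ to be a double ascent beginning with its consecutive inversion, say $\rho = 213$ (inversion in the first pair $2,1$). These overlap on one point: the last entry of $\sigma$ and the first entry of $\rho$ must be order-isomorphic as one-element sequences, which is automatic. Now any combining structure $\theta$ must restrict to $\sigma$ on its first $|\sigma|$ points and to $\rho$ on its last $|\rho|$ points, so $\theta$ has length $|\sigma| + |\rho| - 1$ and contains, as consecutive subpermutations, both the inversion coming from $\sigma$ and the (distinct, since they sit in different positions once $|\sigma|\ge 2$) inversion coming from $\rho$. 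Hence $\theta$ has at least two consecutive inversions and is not a double ascent. I would verify carefully that the two inversions are genuinely in different positions of $\theta$ (this needs the pieces chosen so the inversions do not coincide at the overlap point) and that no choice of the remaining relative values can remove either inversion, which is immediate because restrictions preserve the relative order of retained entries.

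The main obstacle — really the only subtlety — is making sure the chosen $\sigma,\rho$ actually overlap in the technical sense of Definition (the "overlap" definition): I must check that $\sigma\Harpoon{[p-x+1,p]} \cong \rho\Harpoon{[1,x]}$ for the chosen overlap size $x$. With $x=1$ this is trivially satisfied, so the example goes through cleanly; I would present the $x=1$ case as the proof and perhaps remark that the phenomenon persists for larger overlaps. A secondary point to get right is the bookkeeping of which linear order governs consecutivity: since $\mathcal{DA}$ inherits its consecutive order from $\mathcal{P}$ and in this paper consecutivity for permutations is defined via the \emph{value} order $\leq_v$, I should phrase "consecutive inversion" and the restrictions consistently with that convention, but this does not change the combinatorial content. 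I expect the whole argument to be a few lines once the explicit pair is written down, with no real calculation.
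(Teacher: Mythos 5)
Your proposal is correct and follows the same strategy as the paper: exhibit an explicit pair of overlapping double ascents (the paper uses $13245$ and $12453$ overlapping on three values; you use $132$ and $213$ overlapping on one) and check that every combining permutation acquires two consecutive inversions. Your smaller example does work — the inversion forced by $\sigma$ occurs positionally at or after the shared value $3$ while the inversion forced by $\rho$ occurs strictly before it, so they cannot coincide — but do carry out the verification you flag, since the literal pair $(3,2)$ need not be positionally adjacent in $\theta$ (e.g.\ in $14352$); what is true is that a descent in a value-restriction forces a descent of $\theta$ somewhere in the corresponding positional window.
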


\begin{proof}
Consider the double ascents 13245 and 12453.  We see that 
\begin{equation*}
    13245 \Harpoon{[3, 5]} = 345 \cong 123 = 12453\Harpoon{[1, 3]}.
\end{equation*}
However, if we identify these points, we obtain the permutation 1324675, which has two adjacent inversions: those given by the subsequences 32 and 75.  Hence 1324675 is not a double ascent. 
\end{proof}

\section{Double ascents and words}
\label{sec DAs and words}

We will be able to encode double ascents as words over an alphabet of size two, enabling us to employ results from \cite{mr} for words under the consecutive order to tackle the wqo and atomicity problems for double ascents.  We will once again make use of Propositions \ref{prop wqo words} and \ref{prop atomicity words}.

To begin, we will describe the relationship between double ascents and words over $\{r, l\}$.  The idea here is that we assign entries of double ascents to the right (`$r$') or left (`$l$') according to whether they come to the right or left of the adjacent inversion.

\begin{defn}
\label{defn associated perm}
Let $w$ be a word over $\{l, r\}$.  The \emph{associated permutation} of $w$ is the double ascent $\textbf{A}(w)$ formed from the empty permutation as follows.  Reading left to right through $w$, for each letter we add a point to the permutation, in increasing value order.  If we read the letter $l$, we add the point to the rightmost position on the left of the second point \Rev{(with respect to $\leq_p$)} of the adjacent inversion, and if we read the letter $r$ we add the point to the rightmost position on the right of the first point \Rev{(again with respect to $\leq_p$)} of the adjacent inversion.
\end{defn}

In this way, the points to the left of the second point of the adjacent inversion (those assigned `left') are the points in the first ascent.  Similarly, points to the right of the first point of the adjacent inversion (those assigned `right') are the points of the second ascent.

\begin{ex}
We will construct $\textbf{A}(llrlr)$.  For clarity, we will draw a bar through the position of the adjacent inversion.  The first two letters are $l$, so the first two entries of $\textbf{A}(llrlr)$ by value are $12|$.  The next entry is given by the letter $r$, so this will be added to the right of the adjacent inversion, giving $12|3$.  Following this, the next entry is to the left of the adjacent inversion, giving $124|3$.  Finally, the last letter $r$ gives the largest entry by value to the right of the adjacent inversion, so we obtain $124|35$.  Removing the bar, $\textbf{A}(llrlr)=12435$.
\end{ex}

\begin{defn}
A double ascent $\sigma$ will be associated with the set of words 
\begin{equation*}
    \textbf{W}(\sigma)=\{w \in \{l, r\}^{+} \hspace{1mm}| \hspace{1mm} \textbf{A}(w)=\sigma\}.
\end{equation*}
\end{defn}

By inverting the process described in Definition \ref{defn associated perm}, for any double ascent $\sigma$ we can find the words in $\textbf{W}(\sigma)$.  To do this, begin with the empty word.  We will then consider the entries $\sigma$ in order of their values, smallest to largest, and for each entry add a letter to construct words in $\textbf{W}(\sigma)$.  If an entry of $\sigma$ is to the left of the second entry of the adjacent inversion, we will add the letter $l$.  On the other hand, if an entry is to the right of the first entry of the adjacent inversion, we will add the letter $r$.  It can be seen that the resulting words are in $\textbf{W}(\sigma)$ by constructing $\textbf{A}(w)$ for each word $w$ and observing that this is precisely $\sigma$.  


Since the above process can be carried out for any double ascent $\sigma$, we obtain the following corollary.

\begin{cor}
\label{cor W non-empty}
For any double ascent $\sigma$, the set $\textbf{W}(\sigma)$ is non-empty.  \qed
\end{cor}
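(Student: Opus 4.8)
The plan is to prove surjectivity of $\textbf{A}$ onto double ascents by exhibiting, for each double ascent $\sigma$, an explicit word $w \in \textbf{W}(\sigma)$; this is exactly the inversion procedure sketched in the paragraph preceding the statement, and the work lies in checking that it does what is claimed.

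First I would fix a double ascent $\sigma = \sigma_1 \dots \sigma_n$ with exactly one consecutive inversion, say $(\sigma_k, \sigma_{k+1})$. I would then define a word $w = w_1 \dots w_n$ over $\{l, r\}$ by letting $w_j$ record the side of the inversion on which the $j$-th smallest entry of $\sigma$ sits: $w_j = l$ if that entry occupies one of the positions $1, \dots, k$, and $w_j = r$ if it occupies one of the positions $k+1, \dots, n$. Since $n \geq 1$ this is a genuine element of $\{l, r\}^{+}$. Next I would verify $\textbf{A}(w) = \sigma$ by induction on the length of the prefix of $w$ processed so far, maintaining the invariant that after reading $w_1 \dots w_j$ the partial permutation produced by $\textbf{A}$ is the reduction of the restriction of $\sigma$ to its $j$ smallest values, with the bar placed immediately after the last of these values lying to the left of the inversion. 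The inductive step goes through because $\sigma_1 \dots \sigma_k$ and $\sigma_{k+1} \dots \sigma_n$ are each increasing (as $\sigma$ is a double ascent), so within the left block, and within the right block, value order coincides with position order; hence inserting the new, largest-so-far value at ``the rightmost position on the left (resp. right) of the bar'' reproduces precisely where that entry sits in $\sigma$.

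For the remaining case, a single ascent $\sigma = 12 \dots n$, I would simply note that $w = l^n$ (equivalently $r^n$) lies in $\textbf{W}(\sigma)$: reading only $l$'s never creates a consecutive inversion, and each step appends the new largest value to the right of all previously placed entries, so $\textbf{A}(l^n) = 12 \dots n$. Combining the two cases gives $\textbf{W}(\sigma) \neq \varnothing$ for every double ascent $\sigma$.

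I expect the only real obstacle to be bookkeeping rather than mathematics: one must pin down exactly what ``the rightmost position on the left/right of the second/first point of the consecutive inversion'' means once several points have been inserted, and state the inductive invariant precisely enough that its preservation is a routine check. There is no deep content here — the corollary just records that $\textbf{A} \colon \{l, r\}^{+} \to \mc{DA}$ is onto.
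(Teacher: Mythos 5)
Your proposal is correct and follows essentially the same route as the paper: the paper's justification is precisely the inversion procedure described in the paragraph before the corollary (read the entries of $\sigma$ in value order and record $l$ or $r$ according to the side of the consecutive inversion), and your explicit construction of $w$ together with the inductive verification that $\textbf{A}(w)=\sigma$ is just a more carefully spelled-out version of that argument. Your separate treatment of single ascents matches the paper's handling via the words $l^a r^c$, so there is no substantive difference.
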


The following proposition is immediate from the definitions.

\begin{prop}\hspace{7cm}
\begin{enumerate}
\item For any double ascent $\sigma$, if $w\in\textbf{W}(\sigma)$ then $\sigma =\textbf{A}(w)$;
\item For any word $w \in \{l, r\}^{+}$, $w \in \textbf{W}(\textbf{A}(w))$. \qed
\end{enumerate}
\end{prop}

\begin{lemma}
For a double ascent $\sigma$, $|\textbf{W}(\sigma)| > 1$ if and only if $\sigma$ is a single ascent.
\end{lemma}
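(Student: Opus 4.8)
The plan is to work directly from the explicit description of $\textbf{A}$ in Definition~\ref{defn associated perm} and from the inverse construction of $\textbf{W}(\sigma)$ described just after it (together with Corollary~\ref{cor W non-empty}, which guarantees $\textbf{W}(\sigma)\neq\varnothing$). The structural fact I would establish first, by an easy induction on $|w|$, is that at every stage of building $\textbf{A}(w)$ the partial permutation is the concatenation of the values already inserted on reading an $l$, listed in increasing order, followed by the values already inserted on reading an $r$, again in increasing order. Consequently $\textbf{A}(w)$ is always a concatenation of two increasing runs, so its only candidate for a consecutive inversion is the boundary between the $l$-block and the $r$-block; in particular $\textbf{A}(l^a r^b) = 12\cdots(a+b)$ has no consecutive inversion at all.

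For the ($\Leftarrow$) direction: if $\sigma$ is a single ascent then $\sigma = 12\cdots n$ for some $n\geq 1$, and by the previous paragraph $l^a r^{n-a}\in\textbf{W}(\sigma)$ for every $a\in\{0,1,\dots,n\}$. These $n+1\geq 2$ words are distinct, so $|\textbf{W}(\sigma)| > 1$.

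For the ($\Rightarrow$) direction I would argue the contrapositive: suppose $\sigma$ is a double ascent that is not a single ascent, so it has exactly one consecutive inversion, say $(\sigma_i,\sigma_{i+1})$, which splits $\sigma$ into the increasing runs $\sigma_1\cdots\sigma_i$ and $\sigma_{i+1}\cdots\sigma_n$. Given any $w\in\textbf{W}(\sigma)$, the structural fact forces the boundary of the $l$-block and the $r$-block of $\textbf{A}(w)=\sigma$ to coincide with the unique consecutive inversion of $\sigma$; hence the set of values inserted via $l$ must be exactly $\{\sigma_1,\dots,\sigma_i\}$ and those inserted via $r$ exactly $\{\sigma_{i+1},\dots,\sigma_n\}$. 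Since the values are inserted in increasing order as $w$ is read left to right, this determines the $k$-th letter of $w$ for every $k$, so $w$ is unique; with Corollary~\ref{cor W non-empty} this gives $|\textbf{W}(\sigma)| = 1$.

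The only genuinely delicate point is the inductive structural claim about $\textbf{A}(w)$, and within it the degenerate early stages when no $r$ (respectively no $l$) has yet been read --- here one must read ``the second point of the consecutive inversion'' in Definition~\ref{defn associated perm} as the position of the bar, i.e. the far-right end of the current partial permutation, exactly as in the worked example $\textbf{A}(llrlr)$. Everything else is bookkeeping.
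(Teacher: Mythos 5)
Your proposal is correct and follows essentially the same route as the paper: the ($\Leftarrow$) direction exhibits at least two words of the form $l^a r^c$ for a single ascent, and the ($\Rightarrow$) direction argues the contrapositive by observing that a unique consecutive inversion forces each letter of any $w\in\textbf{W}(\sigma)$ to be determined by the position of the corresponding value relative to that inversion. You simply make explicit (via the inductive structural claim about the $l$-block and $r$-block) what the paper leaves as a one-line remark.
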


\begin{proof}
$(\Leftarrow)$ Let $\sigma$ be a single ascent of length $n \geq 1$.  Then the words $l^n, r^n \in \textbf{W}(\sigma)$, so $|\textbf{W}(\sigma)| > 1$. $(\Rightarrow)$ We prove the contrapositive.  Let $\sigma$ be a double ascent, but not a single ascent.  Then the process to find the words in $\textbf{W}(\sigma)$ yields exactly one word, as each letter is determined uniquely by its position relative to the adjacent inversion in $\sigma$.
\end{proof}

From now on, in the case where $\sigma$ is not a single ascent, we will abuse notation and denote the single element of the set $\textbf{W}(\sigma)$ by $\textbf{W}(\sigma)$.

\begin{lemma}
\label{lemma single ascents}
A double ascent $\sigma$ is a single ascent of length $n$ if and only if $\textbf{W}(\sigma)=\{l^ar^c \hspace{1mm}|\hspace{1mm} a+c=n\}$.
\end{lemma}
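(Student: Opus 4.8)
The plan is to prove both directions directly from the construction in Definition \ref{defn associated perm} and its inverse, using the preceding lemma that $|\textbf{W}(\sigma)| > 1$ precisely for single ascents.

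For the forward direction, suppose $\sigma$ is a single ascent of length $n$; that is, $\sigma$ is the increasing permutation $12\dots n$. I first claim every word $l^a r^c$ with $a + c = n$ lies in $\textbf{W}(\sigma)$. Running the construction of $\textbf{A}(l^a r^c)$: the first $a$ letters are $l$, which place the $a$ smallest values consecutively to the left of (the second point of) the inversion; the next $c$ letters are $r$, each appended to the right. One checks that reading left to right this produces the entries $1, 2, \dots, a$ to the left and $a+1, \dots, n$ to the right, all in increasing value order, so no actual inversion is created and $\textbf{A}(l^a r^c) = 12\dots n = \sigma$. (The edge cases $a = 0$ and $c = 0$ give $r^n$ and $l^n$, which also reduce to the increasing permutation.) Conversely, I must show no other words lie in $\textbf{W}(\sigma)$: if $w \in \textbf{W}(\sigma)$ and $w$ is not of the form $l^a r^c$, then $w$ contains a factor $rl$, so at some stage we append an $r$ (placing a point to the right of the first inversion point) and later append an $l$ (placing a smaller... wait, larger by value) point to its left. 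The point appended by $l$ has strictly larger value than the one appended by $r$ but is placed to its left, creating a genuine consecutive inversion somewhere in $\sigma$, contradicting that $\sigma$ is a single ascent. Hence $\textbf{W}(\sigma) = \{l^a r^c \mid a+c = n\}$.

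For the reverse direction, suppose $\textbf{W}(\sigma) = \{l^a r^c \mid a + c = n\}$. This set has more than one element (it contains $l^n$ and $r^n$), so by the previous lemma $\sigma$ is a single ascent; and its length is $n$ since every word in $\textbf{W}(\sigma)$ has length $n$ and, by the inverse construction, $|w| = |\sigma|$ for $w \in \textbf{W}(\sigma)$.

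I expect the main (only mild) obstacle to be the careful bookkeeping in the converse half of the forward direction — verifying that a factor $rl$ in $w$ genuinely forces a consecutive inversion in $\textbf{A}(w)$ rather than merely a non-consecutive one. The key observation is that the $r$-step appends its point as far right as possible past the first inversion point, and the immediately following $l$-step appends a strictly larger-valued point in a position to its left; since these two insertions happen consecutively in value order and nothing is inserted between them, the two points end up adjacent in position with the larger value on the left, which is exactly a consecutive inversion. Everything else is routine unwinding of the definitions.
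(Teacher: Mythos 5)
Your overall strategy is the same as the paper's: show that a factor $rl$ in $w$ forces a consecutive inversion in $\textbf{A}(w)$, so that for a single ascent $\sigma$ the set $\textbf{W}(\sigma)$ is exactly $\{l^ar^c \mid a+c=n\}$; your reverse direction, which invokes the cardinality lemma ($|\textbf{W}(\sigma)|>1$ iff $\sigma$ is a single ascent) instead of directly recomputing $\textbf{A}(l^ar^c)$, is a legitimate small shortcut over the paper's check. However, the justification you yourself flag as the key observation is wrong as stated. You claim that when an $r$-step is immediately followed by an $l$-step, the two newly inserted points ``end up adjacent in position with the larger value on the left''. They need not be adjacent in position: consecutive insertion in \emph{value} order does not give adjacency in \emph{position}, because all earlier $r$-points sit between the bar and the newest $r$-point. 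For example, $\textbf{A}(rrl)=312$: the point inserted by the second $r$ has value $2$ and lands in position $3$, while the point inserted by the final $l$ has value $3$ and lands in position $1$; they are separated by the value-$1$ point, and the actual consecutive inversion is the pair $(3,1)$.

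The conclusion you want is still true, but for a different reason, and the repair is easy. By construction $\textbf{A}(w)$ always has the shape $\ell_1\dots\ell_a \mid r_1\dots r_c$ with the $\ell_i$ increasing in value and the $r_j$ increasing in value, so the only possible consecutive inversion is the pair $(\ell_a, r_1)$ straddling the bar. This pair is an inversion exactly when the last $l$-point has larger value than the first $r$-point, i.e.\ exactly when some $l$ occurs later in $w$ than some $r$, i.e.\ (for words over $\{l,r\}$) exactly when $w$ contains the factor $rl$. With that substitution your argument goes through and coincides with the paper's.
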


\begin{proof}
First note that since one letter is added to words in $\textbf{W}(\sigma)$ for each entry of $\sigma$, words in $\textbf{W}(\sigma)$ have length $n$.  $(\Rightarrow)$ Consider constructing words in $\textbf{W}(\sigma)$.  Initially, we may start with either the letter $l$ or $r$.  Once we have added the letter $r$, every following letter must also be $r$, since the consecutive subword $rl$ would correspond to an adjacent inversion.  Hence, the possible words are precisely those in $\{l^ar^c \hspace{1mm} | \hspace{1mm} a+c=n\}$.  $(\Leftarrow)$ It can easily be checked that the associated permutation of any word in $\{l^ar^c \hspace{1mm} |\hspace{1mm} a+c=n\}$ is $\sigma$ (the single ascent of length $n$).
\end{proof}

\begin{lemma}
\label{lemma double ascent containment}
Let $\sigma, \rho$ be double ascents which are not single ascents. If $\sigma \leq \rho$, then $\textbf{W}(\sigma) \leq \textbf{W}(\rho)$ under the consecutive subword order.  
\end{lemma}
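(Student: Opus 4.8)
The plan is to show that a consecutive embedding of double ascents $\sigma \leq \rho$ induces a consecutive subword embedding $\textbf{W}(\sigma) \leq \textbf{W}(\rho)$, by tracking how the letter $l$/$r$ assigned to each entry behaves under the embedding. Since neither $\sigma$ nor $\rho$ is a single ascent, each has a unique consecutive inversion, and $\textbf{W}(\sigma)$, $\textbf{W}(\rho)$ are honest words (we are using the single-element abuse of notation). Recall from the construction in Definition \ref{defn associated perm} that the $i$-th letter of $\textbf{W}(\rho)$ (reading left to right) records whether the entry of $\rho$ with the $i$-th smallest value lies to the left of the second point of $\rho$'s consecutive inversion (letter $l$) or to the right of the first point of that inversion (letter $r$).

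First I would set up notation: say $\sigma \leq \rho$ via a contiguous map $f$ with respect to $\leq_v$, so the image of $\sigma$ in $\rho$ is a block of $|\sigma|$ consecutive values of $\rho$, forming a consecutive subpermutation $\rho\Harpoon{\mathrm{im}(f)} \cong \sigma$. The key claim is that $\textbf{W}(\sigma)$ is exactly the length-$|\sigma|$ factor of $\textbf{W}(\rho)$ occupying positions $\mathrm{im}(f)$ — i.e. the letters of $\textbf{W}(\rho)$ restricted to the value-interval that $\sigma$ sits in. To prove this I would compare, for each entry $x$ in $\mathrm{im}(f)$, the letter it receives as an entry of $\rho$ versus the letter the corresponding entry receives in the standalone permutation $\sigma$. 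The crucial observation is that since $\sigma$ is not a single ascent, $\sigma$'s own consecutive inversion must survive as a consecutive inversion of $\rho$ (consecutive subpermutations preserve the position-order of the block of entries involved, and $\rho$ being a double ascent has at most one inversion, so $\sigma$'s inversion \emph{is} $\rho$'s inversion). Hence the two "reference points" used to decide $l$ vs $r$ — the first and second points of the consecutive inversion — are the same in $\sigma$ and in $\rho$ (once restricted to the block), so every entry of the block gets the same letter in both readings. This gives $\textbf{W}(\sigma) = \textbf{W}(\rho)\Harpoon{\mathrm{im}(f)}$ as words, and since $\mathrm{im}(f)$ is a set of consecutive values — hence a contiguous block of positions in $\textbf{W}(\rho)$ — this is precisely a consecutive subword relation $\textbf{W}(\sigma) \leq \textbf{W}(\rho)$.

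The step I expect to be the main obstacle is justifying cleanly that $\sigma$'s consecutive inversion coincides with $\rho$'s consecutive inversion under the embedding, and that the left/right position relations relative to the inversion points agree between $\sigma$ and its image in $\rho$. This requires being careful about the interplay of the two linear orders $\leq_v$ and $\leq_p$ on permutations: the contiguous map is with respect to $\leq_v$, but the $l$/$r$ assignment depends on $\leq_p$ (position relative to the inversion). Because a consecutive subpermutation preserves all order relations among the entries of the block in \emph{both} coordinates, the relative positions of block-entries to the two inversion points are the same whether computed inside $\sigma$ or inside $\rho$; and $\rho$ has no other inversion for a block entry to be on the "wrong side" of. I would spell this out by a short case check on where $\sigma$'s inversion must map, using that $\rho$ is a double ascent (so $\leq 1$ inversion total) together with the fact that $\sigma$ contributes an inversion. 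Once this is nailed down, the rest is a direct unwinding of the two definitions and the conclusion follows.
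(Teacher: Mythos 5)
Your overall route is the same as the paper's: both arguments come down to the identity $\textbf{W}(\sigma) = \textbf{W}(\rho\Harpoon{[k,\, k+|\sigma|-1]})$, i.e.\ that the word of a value-consecutive restriction is the corresponding factor of $\textbf{W}(\rho)$ (the paper simply asserts this in one line). However, the justification you give for this key step rests on a false claim. You assert that $\sigma$'s consecutive inversion ``must survive as a consecutive inversion of $\rho$'' and hence that ``$\sigma$'s inversion \emph{is} $\rho$'s inversion'', so that the two reference points agree. A consecutive inversion requires its two entries to be adjacent in position, and adjacency in the restriction does not imply adjacency in $\rho$: entries outside the value window may be interleaved. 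Concretely, take $\rho = 24513$ and $\sigma = 213 \cong \rho\Harpoon{[1,3]}$; both are double ascents with exactly one consecutive inversion. Here $\rho$'s inversion is the pair $(5,1)$ at positions $3,4$, whereas $\sigma$'s inversion corresponds to the entries $2$ and $1$ of $\rho$, which sit at positions $1$ and $4$ and are not adjacent in $\rho$. In general neither inversion point of $\sigma$ need coincide with the corresponding inversion point of $\rho$.

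The conclusion you want is nevertheless true, and the repair is short: what determines the letter of an entry is not the inversion pair itself but which ascent of $\rho$ the entry lies in. If $(x,y)$ is the inversion of $\sigma$ (with $x$ preceding $y$ in position), then since each ascent of $\rho$ is increasing in value and occupies an interval of positions, $x$ must lie in $\rho$'s first ascent and $y$ in its second; and since $x$ and $y$ are adjacent in $\sigma$, no entry of the value window lies strictly between them in position. Hence every window entry at or before $x$ lies in $\rho$'s first ascent and every window entry at or after $y$ lies in its second, which is exactly the statement that each window entry receives the same letter in $\textbf{W}(\sigma)$ as in $\textbf{W}(\rho)$ (in the example, $\textbf{W}(213)=rlr$ is indeed the length-$3$ prefix of $\textbf{W}(24513)=rlrll$). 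With that substitution your argument closes the gap and coincides with the paper's proof.
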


\begin{proof}
Since $\sigma \leq \rho$, it follows that $\sigma \cong \rho \Harpoon{[k,\hspace{0.7mm} k+|\sigma|-1]}$ for some $k \in [1, |\rho|]$.  Since neither $\sigma$ nor $\rho$ is a single ascent, $\textbf{W}(\sigma)$ and $\textbf{W}(\rho)$ are uniquely \Rev{defined}, so $\textbf{W}(\sigma)=\textbf{W}(\rho \Harpoon{[k,\hspace{0.7mm} k+|\sigma|-1]}) \leq \textbf{W}(\rho)$.
\end{proof}

\begin{lemma}
\label{lemma single ascent containment}
Suppose $\sigma, \rho$ are double ascents of which at least one is a single ascent.  If $\sigma \leq \rho$, then for any $u \in \textbf{W}(\rho)$ there exists $w \in \textbf{W}(\sigma)$ such that $w \leq u$.
\end{lemma}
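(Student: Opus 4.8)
The strategy is to exploit the concrete combinatorial description of the map $\textbf{A}$ and to show that taking a consecutive subword of a word over $\{l,r\}$ corresponds precisely to restricting its associated permutation to an interval of values. First I would record the following description, read straight off Definition~\ref{defn associated perm}: for $u = u_1\cdots u_n \in \{l,r\}^+$, the one-line notation of $\textbf{A}(u)$ is the sequence consisting of the indices $i$ with $u_i = l$ listed in increasing order, followed by the indices $i$ with $u_i = r$ listed in increasing order (the index $i$ being the value assigned to the point added when the $i$-th letter is read). This is verified by a routine induction on $n$: reading an $l$ inserts the new (largest so far) value at the right-hand end of the block of $l$-values, and reading an $r$ inserts it at the very end, so the $l$-block and the $r$-block each stay sorted by value and the $l$-block stays to the left of the $r$-block.

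The key step is then: if $1 \le k \le k + s - 1 \le n$ and $w = u_k u_{k+1}\cdots u_{k+s-1}$, then $\textbf{A}(w) \cong \textbf{A}(u)\Harpoon{[k,\,k+s-1]}$. Using the description above, the entries of $\textbf{A}(u)$ whose values lie in $[k, k+s-1]$ are, in position order, the indices $i \in [k, k+s-1]$ with $u_i = l$ in increasing order followed by those with $u_i = r$ in increasing order; reducing (i.e. subtracting $k-1$) turns this list into the positions $j$ of $w$ with $w_j = l$ followed by those with $w_j = r$, which by the same description is exactly the one-line notation of $\textbf{A}(w)$.

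Finally I would conclude as follows. If $\sigma \le \rho$ then, by the definition of the consecutive order on permutations, $\sigma \cong \rho\Harpoon{[k,\,k+|\sigma|-1]}$ for some $k$. Given any $u \in \textbf{W}(\rho)$, so that $\textbf{A}(u) = \rho$, set $w = u_k u_{k+1}\cdots u_{k+|\sigma|-1}$; this is a consecutive subword of $u$, hence $w \le u$, and by the key step $\textbf{A}(w) \cong \textbf{A}(u)\Harpoon{[k,\,k+|\sigma|-1]} = \rho\Harpoon{[k,\,k+|\sigma|-1]} \cong \sigma$, so $w \in \textbf{W}(\sigma)$, as required. (When $\rho$ — and hence, since $\sigma \le \rho$, also $\sigma$ — is a single ascent, one can instead argue directly: by Lemma~\ref{lemma single ascents} $u$ has the form $l^a r^c$, so any length-$|\sigma|$ consecutive subword of $u$ has the form $l^{a'}r^{c'}$ and therefore lies in $\textbf{W}(\sigma)$ by the same lemma.)

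The main obstacle is the bookkeeping in the key step, specifically the case where the value interval $[k, k+|\sigma|-1]$ meets only one side of the unique consecutive inversion of $\rho$, so that $\sigma$ becomes a single ascent and that inversion effectively "disappears". Phrasing $\textbf{A}$ via the position-order description above avoids any case analysis on where the inversion sits and makes this case no different from the others, which is why I would set things up that way rather than reasoning directly in terms of positions relative to the inversion.
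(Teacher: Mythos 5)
Your proposal is correct and follows essentially the same route as the paper: both take the consecutive subword $w = u_k\cdots u_{k+|\sigma|-1}$ of $u$ corresponding to the value window $[k, k+|\sigma|-1]$ witnessing $\sigma \leq \rho$, and conclude $w \in \textbf{W}(\sigma)$. You simply supply more detail than the paper does, making explicit (via the description of $\textbf{A}(u)$ as the $l$-indices followed by the $r$-indices) why $\textbf{A}(w) \cong \textbf{A}(u)\Harpoon{[k,\,k+|\sigma|-1]}$, a step the paper's proof asserts without justification.
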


\begin{proof}
If $\rho$ is a single ascent, then since $\sigma \leq \rho$, it is also true that $\sigma$ is a single ascent.  Therefore, we may assume that $\sigma$ is a single ascent.

Since $\sigma \leq \rho$, we know that $\sigma \cong \rho\Harpoon{[k,\hspace{0.7mm} k+|\sigma|-1]}$ for some $k \in \mathbb{N}$ (note that there may be more than one choice for $k$).  Take $u=u_1\dots u_n \in \textbf{W}(\rho)$.  We may take $w=u \Harpoon{[k,\hspace{0.7mm} k+|\sigma|-1]} \in \textbf{W}(\sigma)$ as required.
\end{proof}

\begin{lemma}
\label{lemma containment}
If $u, v \in \{l, r\}^{+}$ and $u \leq v$ under the consecutive subword order then $\textbf{A}(u) \leq \textbf{A}(v)$.
\end{lemma}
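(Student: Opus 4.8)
The plan is to argue directly from the construction of the associated permutation in Definition \ref{defn associated perm}, exploiting the fact that $\mathbf{A}$ builds a permutation by reading a word left to right and adding points in increasing value order. The key observation is that $\mathbf{A}(w)$, viewed via its value order $\leq_v$, is essentially a relabelling of the word $w$: the $i$-th smallest entry of $\mathbf{A}(w)$ corresponds exactly to the $i$-th letter of $w$, and that letter (together with the position of the consecutive inversion) determines whether this entry sits to the left or right of the inversion. Thus a consecutive block of letters $u = v_k v_{k+1} \dots v_{k+|u|-1}$ of $v$ corresponds precisely to the $|u|$ consecutive entries of $\mathbf{A}(v)$ occupying value positions $[k, k+|u|-1]$.

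First I would set up notation: write $v = v_1 \dots v_n$ and $u = v_k \dots v_{k+m-1}$ where $m = |u|$, so that $u$ is a consecutive subword of $v$ occurring at position $k$. I would then show that $\mathbf{A}(u) \cong \mathbf{A}(v)\Harpoon{[k, k+m-1]}$, where the restriction is taken with respect to the value order $\leq_v$ (consistently with the conventions established just before Definition \ref{defn consecutive inversion}, e.g. $162534\Harpoon{[2,5]} = 2534 \cong 1423$). Granting this isomorphism, the contiguous map $f\colon [1,m] \to [1,n]$ with respect to $\leq_v$ given by $f(x) = x + k - 1$ witnesses $\mathbf{A}(u) \leq \mathbf{A}(v)$ under the consecutive order, which is exactly what we want.

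The substantive step is therefore establishing $\mathbf{A}(u) \cong \mathbf{A}(v)\Harpoon{[k, k+m-1]}$. I would prove this by analysing how the relative horizontal positions of the entries at value-ranks $k, k+1, \dots, k+m-1$ in $\mathbf{A}(v)$ are determined. Since $\mathbf{A}(v)$ is constructed by inserting the value-ranks $1, 2, \dots, n$ in order, the relative left-to-right order of the entries with value-ranks in $[k, k+m-1]$ depends only on the letters $v_k, \dots, v_{k+m-1}$ and on whether the consecutive inversion of $\mathbf{A}(v)$ falls among these entries or outside them. If all of $v_k, \dots, v_{k+m-1}$ are equal (say all $l$ or all $r$), then $u$ is $l^m$ or $r^m$, the restriction is the single ascent of length $m$, and $\mathbf{A}(u)$ is also that single ascent by Lemma \ref{lemma single ascents}. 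Otherwise $u$ contains both letters, hence $u$ has a unique consecutive inversion (at its unique $rl$-descent in letter terms — i.e. the place where reading shifts from having seen an $r$); the same $l$/$r$ split of the value-ranks $[k, k+m-1]$ governs the horizontal order in both $\mathbf{A}(u)$ and the restriction $\mathbf{A}(v)\Harpoon{[k,k+m-1]}$, because the insertion rule ("rightmost position left of / right of the inversion point") produces, among any fixed set of value-ranks, the order: all the later-inserted $l$'s pushed leftward in reverse value order, then the inversion, then all the $r$'s in value order — a description that is intrinsic to the subword $u$. Matching these two orderings entry by entry gives the isomorphism.

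The main obstacle I anticipate is the bookkeeping in that last step: carefully verifying that the horizontal order of a chosen set of value-ranks in $\mathbf{A}(v)$ really is unaffected by the presence of the value-ranks outside $[k, k+m-1]$ and is controlled purely by the local letters and the location of the inversion. This is where one must be precise about the "rightmost position" insertion rule. A clean way to handle it is to prove the auxiliary fact that for any word $w$ and any $k$, the restriction of $\mathbf{A}(w)$ to value-ranks $\geq k$ equals $\mathbf{A}(w')$ where $w'$ is the suffix of $w$ from position $k$, and symmetrically for prefixes and value-ranks $\leq \ell$; chaining a suffix restriction and a prefix restriction then yields the claim for the arbitrary consecutive block $u$. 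These two one-sided statements follow fairly directly by induction on word length from the insertion rule, so this reduces the combinatorial core to a short induction rather than an ad hoc case analysis.
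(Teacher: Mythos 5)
Your proposal is correct and follows essentially the same route as the paper: reduce to showing that $\textbf{A}(u) \cong \textbf{A}(v)\Harpoonc{[k,\, k+|u|-1]}$ for the consecutive occurrence of $u$ in $v$, whence the contiguous map in the value order witnesses $\textbf{A}(u) \leq \textbf{A}(v)$. The paper simply asserts this key identity in a two-line proof, whereas you correctly identify it as the substantive step and supply a sound plan (the prefix/suffix induction on the insertion rule) for verifying it.
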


\begin{proof}
Suppose that $u=u_1\dots u_n$ and $v=v_1\dots v_m$.  Since $u \leq v$, $u \cong v\Harpoon{[k, \hspace{0.7mm}k+|u|-1]}$ for some $k \in \mathbb{N}$.    This means that $\textbf{A}(u)=\textbf{A}(v\Harpoon{[k, \hspace{0.7mm}k+|u|-1]}) \leq \textbf{A}(v)$ as required.
\end{proof}

\section{WQO for double ascents}
\label{sec DAs wqo}

At this point, we are ready to work towards decidability of the wqo problem for double ascents under the consecutive order.  To do this, we will exploit the relationship between double ascents and words over $\{l, r\}$, reducing the wqo problem for double ascents to certain instances of the wqo problem for $\mc{W}_{\{l, r\}}$.  

Recall that single ascents are the only double ascents which are associated with more than one word; as such, we need to treat them a little more carefully.  To begin, we will show that the sets of words associated with single ascents are wqo.

\begin{lemma}
\label{lemma Av(br)}
The words associated with single ascents are precisely those in $\Av(rl)$. 
\end{lemma}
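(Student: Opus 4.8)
The plan is to reduce the statement to the explicit description of $\textbf{W}(\sigma)$ for single ascents provided by Lemma \ref{lemma single ascents}, together with an elementary combinatorial description of the set $\Av(rl)$ inside $\mc{W}_{\{l,r\}}$. Since both sides of the claimed equality are subsets of $\{l,r\}^+$, I would simply compute each of them and observe that they coincide; this handles both inclusions at once.

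First I would note that a non-empty word $w$ over $\{l,r\}$ avoids $rl$ as a consecutive subword precisely when no occurrence of $l$ comes after an occurrence of $r$ in $w$; equivalently, $w = l^a r^c$ for some $a, c \geq 0$ with $a + c = |w| \geq 1$. Hence $\Av(rl) = \{\, l^a r^c \mid a, c \geq 0,\ a+c \geq 1 \,\}$. Next I would recall that, by definition, a word is \emph{associated with a single ascent} exactly when it lies in $\textbf{W}(\sigma)$ for some single ascent $\sigma$. By Lemma \ref{lemma single ascents}, for the single ascent $\sigma$ of length $n$ we have $\textbf{W}(\sigma) = \{\, l^a r^c \mid a + c = n \,\}$, and every single ascent is the single ascent of length $n$ for a unique $n \geq 1$. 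Taking the union over all such $\sigma$ therefore gives that the set of words associated with single ascents equals $\bigcup_{n \geq 1} \{\, l^a r^c \mid a+c = n \,\} = \{\, l^a r^c \mid a+c \geq 1 \,\}$, which is exactly $\Av(rl)$ by the previous computation. This establishes the equality.

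I do not expect a genuine obstacle here, since the lemma is essentially a restatement of Lemma \ref{lemma single ascents}. The only points requiring a little care are bookkeeping of lengths (each letter of $w$ contributes one entry to $\textbf{A}(w)$, so $|w| = |\textbf{A}(w)|$) and the empty-word convention: words in $\textbf{W}(\sigma)$ are non-empty, which matches the fact that avoidance sets of $\mc{W}_{\{l,r\}}$ consist of non-empty words, so $\Av(rl)$ is precisely $\{l^a r^c : a+c \geq 1\}$ and not the larger set that would also include the empty word.
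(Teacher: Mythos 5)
Your proof is correct and follows essentially the same route as the paper: both invoke Lemma \ref{lemma single ascents} to identify the words associated with single ascents as those of the form $l^a r^c$, and then observe that these are exactly the words avoiding $rl$ as a consecutive subword. Your version merely adds some harmless extra bookkeeping about lengths and the empty word.
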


\begin{proof}
We saw in Lemma \ref{lemma single ascents} that words associated with single ascents precisely are those of the form $l^ar^c$, where $a, c \geq 0$.  These words are exactly those avoiding $rl$ under the subword order. 
\end{proof}

\begin{lemma}
The avoidance set $\Av(rl)$ of words under the consecutive order is wqo.
\end{lemma}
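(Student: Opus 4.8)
The plan is to make the structure of $\Av(rl)$ completely explicit and then reduce to a classical fact. By Lemma~\ref{lemma single ascents} (or simply because a word over $\{l,r\}$ omits $rl$ as a consecutive subword precisely when every $l$ precedes every $r$), the elements of $\Av(rl)$ are exactly the words $l^a r^c$ with $a,c\geq 0$ and $a+c\geq 1$. I would then introduce the map $\phi\colon \Av(rl)\to (\N\cup\{0\})^2$ given by $\phi(l^a r^c)=(a,c)$ and show it is an order-embedding, i.e.\ $l^a r^c \leq l^{a'} r^{c'}$ under the consecutive subword order if and only if $a\leq a'$ and $c\leq c'$. Granting this, $\Av(rl)$ is isomorphic to a subposet of $((\N\cup\{0\})^2,\leq)$, which is wqo by Dickson's Lemma, and any subposet of a wqo poset is wqo; hence $\Av(rl)$ is wqo.

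The only thing to verify is the claimed equivalence, and both directions are short. For the forward implication, if $l^a r^c$ is a consecutive subword of $l^{a'} r^{c'}$, then the number of $l$'s it contains cannot exceed the number in $l^{a'} r^{c'}$, and likewise for the $r$'s, so $a\leq a'$ and $c\leq c'$. For the converse, suppose $a\leq a'$ and $c\leq c'$: if $a,c\geq 1$, the window of $l^{a'} r^{c'}$ consisting of its last $a$ letters $l$ together with its first $c$ letters $r$ is a consecutive subword equal to $l^a r^c$; if $c=0$ then $l^a$ (with $a\leq a'$) occurs as an initial segment, and symmetrically if $a=0$.

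As a cross-check, the same conclusion follows from the machinery of Section~\ref{sec words cons}: with $B=\{rl\}$ and $b=2$, the factor graph $\Gamma_{\Av(rl)}$ has vertices $ll$, $lr$, $rr$, a loop at $ll$, a loop at $rr$, and edges $ll\to lr\to rr$; its only cycles are the two loops, and neither is an in-out cycle (in the loop at $ll$ the unique vertex has out-degree $2$ but in-degree $1$ in $\Gamma_{\Av(rl)}$, and symmetrically at $rr$), so $\Av(rl)$ is wqo by Proposition~\ref{prop wqo words}. I do not expect any genuine obstacle here; the only point needing a little care is pinning down the consecutive subword order on the words $l^a r^c$, which the counting argument above settles.
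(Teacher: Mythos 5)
Your proof is correct and takes essentially the same route as the paper: identify $\Av(rl)$ with the words $l^a r^c$, verify that the consecutive subword order on these is the componentwise order on the exponents, and conclude wqo from the wqo of $\mathbb{N}\times\mathbb{N}$ (Dickson's Lemma). The extra detail you supply for the order-equivalence, and the factor-graph cross-check, are both sound but not needed beyond what the paper records.
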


\begin{proof}
$C=\Av(rl)=\{l^ar^c \hspace{1mm} | \hspace{1mm} a, c \geq 0\}$.  It can be seen that if $u=l^{a_1}r^{c_1}$ and $v=l^{a_2}r^{c_2}$ then $u \leq v$ if and only if $a_1 \leq a_2$ and $c_1 \leq c_2$.  Therefore, $(C, \leq)$ is isomorphic to $\mathbb{N} \times \mathbb{N}$ with the natural order, which is wqo.
\end{proof}

Given a finitely based avoidance set $C=\Av(B)$ of double ascents, let $\textbf{W}(B) = \{\textbf{W}(x) \hspace{1mm}| \hspace{1mm} x \in B\}$ be the set of words associated with elements of $B$.  

\begin{lemma}
\label{lemma B and W(B)}
Let $\rho$ be a double ascent.  The following are equivalent:\vspace{1mm}
\begin{thmenumerate}
    \item $\rho \in \Av(B)$;
    \item $\textbf{W}(\rho) \subseteq \Av(\textbf{W}(B))$;
    \item There exists $u \in \textbf{W}(\rho)$ such that $u \in \Av(\textbf{W}(B))$.
\end{thmenumerate}
\end{lemma}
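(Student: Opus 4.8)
The plan is to prove the cycle of implications $(i) \Rightarrow (ii) \Rightarrow (iii) \Rightarrow (i)$, using the translation lemmas of this section to move between containment of double ascents and containment of their associated words. Throughout I read $\textbf{W}(B)$ as the union $\bigcup_{x \in B}\textbf{W}(x)$ of the associated word-sets, so that $\Av(\textbf{W}(B))$ is an avoidance set of $\mathcal{W}_{\{l,r\}}$. For $(i) \Rightarrow (ii)$ I would argue contrapositively at the level of a single word: fix $u \in \textbf{W}(\rho)$ and suppose $u \notin \Av(\textbf{W}(B))$, so there is $w \in \textbf{W}(B)$ with $w \leq u$ under the consecutive subword order, where $w \in \textbf{W}(x)$ for some $x \in B$. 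Lemma \ref{lemma containment} then gives $\textbf{A}(w) \leq \textbf{A}(u)$; since $w \in \textbf{W}(x)$ we have $\textbf{A}(w) = x$ and since $u \in \textbf{W}(\rho)$ we have $\textbf{A}(u) = \rho$ (by the proposition stating $w \in \textbf{W}(\sigma) \Rightarrow \textbf{A}(w) = \sigma$). Hence $x \leq \rho$, contradicting $\rho \in \Av(B)$; as $u$ was arbitrary in $\textbf{W}(\rho)$, we get $\textbf{W}(\rho) \subseteq \Av(\textbf{W}(B))$.

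The implication $(ii) \Rightarrow (iii)$ is immediate once one notes $\textbf{W}(\rho) \neq \varnothing$ by Corollary \ref{cor W non-empty}: any $u \in \textbf{W}(\rho)$ witnesses $(iii)$. For $(iii) \Rightarrow (i)$, suppose $u \in \textbf{W}(\rho) \cap \Av(\textbf{W}(B))$ but, for contradiction, $\rho \notin \Av(B)$, so $x \leq \rho$ for some $x \in B$. I would split on whether $x$ is a single ascent. If $x$ is not a single ascent, then neither is $\rho$ (a consecutive subpermutation of an increasing permutation is increasing), so both $\textbf{W}(x)$ and $\textbf{W}(\rho)$ are singletons (using the lemma characterising $|\textbf{W}(\sigma)|>1$ together with Corollary \ref{cor W non-empty}); Lemma \ref{lemma double ascent containment} then gives $\textbf{W}(x) \leq \textbf{W}(\rho) = u$ with $\textbf{W}(x) \in \textbf{W}(B)$, contradicting $u \in \Av(\textbf{W}(B))$. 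If $x$ is a single ascent, then Lemma \ref{lemma single ascent containment} (taking $\sigma = x$, which is allowed since at least one of $x, \rho$ is a single ascent) applied to our particular $u$ produces $w \in \textbf{W}(x) \subseteq \textbf{W}(B)$ with $w \leq u$, again contradicting $u \in \Av(\textbf{W}(B))$. Either way we reach a contradiction, so $\rho \in \Av(B)$.

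The main obstacle, such as it is, is the bookkeeping around single ascents: these are exactly the double ascents with more than one associated word, so in that regime $\textbf{W}(\rho)$ and $\textbf{W}(B)$ genuinely must be handled as sets of words rather than single words, and one cannot use Lemma \ref{lemma double ascent containment} (which assumes neither permutation is a single ascent). The point is that Lemmas \ref{lemma double ascent containment} and \ref{lemma single ascent containment} between them cover the two regimes, so the real care needed is only to make sure the correct lemma is invoked in each case and — in the $(iii) \Rightarrow (i)$ argument — that the word $w$ extracted from the single-ascent case is the same $u$ fixed at the start, which is exactly the form in which Lemma \ref{lemma single ascent containment} is stated. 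Everything else is a direct unwinding of the definitions.
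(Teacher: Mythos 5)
Your proposal is correct and follows essentially the same route as the paper's proof: $(i)\Rightarrow(ii)$ via (the contrapositive of) Lemma \ref{lemma containment}, $(ii)\Rightarrow(iii)$ via non-emptiness of $\textbf{W}(\rho)$, and $(iii)\Rightarrow(i)$ by the same case split between Lemma \ref{lemma double ascent containment} (neither permutation a single ascent) and Lemma \ref{lemma single ascent containment} (at least one a single ascent). Your version is slightly more explicit than the paper's in citing Corollary \ref{cor W non-empty} and in justifying why the case division is exhaustive, but the substance is identical.
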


\begin{proof}
(i) $(\Rightarrow)$ (ii) We know that $\sigma \nleq \rho$ for all $\sigma \in B$.  By the contrapositive of Lemma \ref{lemma containment}, for all $\sigma \in B$, for any $u \in \textbf{W}(\rho)$ and $t \in \textbf{W}(\sigma)$, it is the case that $t \nleq u$, meaning that $\textbf{W}(\rho) \subseteq \Av(\textbf{W}(B))$.

(ii) $(\Rightarrow)$ (iii) This implication is straightforward.

(iii) $(\Rightarrow)$ (i) Since $u \in \Av(\textbf{W}(B))$, $t \nleq u$ for all $t \in \textbf{W}(B)$.  Take $\sigma \in B$.  If neither $\sigma$ nor $\rho$ is a single ascent, it follows by the contrapositive of Lemma \ref{lemma double ascent containment} that $\sigma \nleq \rho$.  Otherwise, we can apply the contrapositive of Lemma \ref{lemma single ascent containment} to show that $\sigma \nleq \rho$.  Therefore, $\sigma \nleq \rho$ for all $\sigma \in B$, and so $\rho \in \Av(B)$.
\end{proof}

\begin{thrm}
\label{thrm words and perms}
A \Rev{finitely based} avoidance set $C=\Av(B)$ of $\mc{DA}$ is wqo if and only if the avoidance set $K=\Av(\textbf{W}(B))$ of $\mc{W}_{\{l, r\}}$ is wqo, i.e. $\Gamma_K$ contains no in-out cycles.
\end{thrm}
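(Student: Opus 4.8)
The plan is to prove the two implications separately, using the dictionary between double ascents and words over $\{l,r\}$ built up in Lemmas \ref{lemma double ascent containment}, \ref{lemma single ascent containment}, \ref{lemma containment} and, most importantly, the equivalence in Lemma \ref{lemma B and W(B)}. The final ``i.e.'' clause is immediate from Proposition \ref{prop wqo words}, so the real content is the bi-implication between $C=\Av(B)$ being wqo and $K=\Av(\textbf{W}(B))$ being wqo.

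For the direction ``$K$ wqo $\Rightarrow$ $C$ wqo'', I would argue by contrapositive: suppose $C$ contains an infinite antichain $\sigma_1,\sigma_2,\dots$ of double ascents. First discard the single ascents: there are only finitely many single ascents of each length, and since the single ascents inside $C$ form a subposet isomorphic to a downward-closed subset of $\mathbb N\times\mathbb N$ (as in the proof that $\Av(rl)$ is wqo), they contain no infinite antichain, so only finitely many $\sigma_i$ can be single ascents and we may assume none are. Then each $\textbf{W}(\sigma_i)$ is a single well-defined word $w_i$, and by Lemma \ref{lemma B and W(B)} each $w_i\in K$. I claim $\{w_i\}$ is an infinite antichain in $K$: if $w_i\leq w_j$ under the consecutive subword order then $\textbf{A}(w_i)\leq\textbf{A}(w_j)$ by Lemma \ref{lemma containment}, i.e. $\sigma_i\leq\sigma_j$, contradicting antichain-ness (the words are distinct since the $\sigma_i$ are). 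Hence $K$ is not wqo.

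For the direction ``$C$ wqo $\Rightarrow$ $K$ wqo'', again by contrapositive: suppose $K$ contains an infinite antichain of words $u_1,u_2,\dots$ over $\{l,r\}$. Set $\sigma_i=\textbf{A}(u_i)$; by Lemma \ref{lemma B and W(B)}(iii)$\Rightarrow$(i), since $u_i\in\textbf{W}(\sigma_i)\cap\Av(\textbf{W}(B))$, we get $\sigma_i\in\Av(B)=C$. It remains to extract an infinite antichain of $C$ from the $\sigma_i$. As before, only finitely many $\sigma_i$ can be single ascents (the single ascents in $C$ are wqo), so pass to an infinite subsequence in which no $\sigma_i$ is a single ascent; for these, $\textbf{W}(\sigma_i)$ is the single word $u_i$ (any word mapping to a non-single-ascent is uniquely determined, so $u_i$ is in fact the unique element of $\textbf{W}(\sigma_i)$). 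Now if $\sigma_i\leq\sigma_j$ then by Lemma \ref{lemma double ascent containment}, $\textbf{W}(\sigma_i)\leq\textbf{W}(\sigma_j)$, i.e. $u_i\leq u_j$, contradicting the antichain property. The $\sigma_i$ are pairwise distinct since the $u_i$ are and $\textbf{A}$ is a function with $\textbf{A}(u_i)=\sigma_i$. Hence $\{\sigma_i\}$ is an infinite antichain in $C$, so $C$ is not wqo.

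The main obstacle is the careful handling of single ascents, which are precisely the double ascents associated with more than one word (and which occupy the $\Av(rl)$-part of the word picture); one must check that they can be harmlessly removed from any infinite antichain because the poset of single ascents is itself wqo (isomorphic to a subposet of $\mathbb N\times\mathbb N$), and that once they are removed the maps $\sigma\mapsto\textbf{W}(\sigma)$ and $w\mapsto\textbf{A}(w)$ are mutually inverse bijections that are order-preserving in both directions via Lemmas \ref{lemma double ascent containment} and \ref{lemma containment}. Everything else is a routine application of Lemma \ref{lemma B and W(B)} together with the fact (Proposition \ref{prop wqo words}) that wqo of an avoidance set of words is detected by the absence of in-out cycles in its factor graph.
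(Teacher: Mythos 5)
Your proof is correct and takes essentially the same route as the paper's: both directions are handled by the contrapositive, transferring infinite antichains between $C$ and $K$ via $\textbf{A}$ and $\textbf{W}$ using Lemmas \ref{lemma double ascent containment}, \ref{lemma containment} and \ref{lemma B and W(B)}, with single ascents discarded first because only finitely many antichain elements can involve them. One small quibble: in your second direction the finiteness of the single-ascent indices should be justified by applying wqo of $\Av(rl)$ to the words $u_i$ themselves (as the paper does), not by wqo of the single ascents in $C$, since distinct words $u_i$ may map to the same single ascent and so the relevant $\sigma_i$ need not form an antichain.
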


\begin{proof}
$(\Rightarrow)$ We will prove this direction using the contrapositive.  Suppose that $X$ is an infinite antichain in $\Av(\textbf{W}(B))$.  Now let $Y = \{x \in X\hspace{1mm} |\hspace{1mm} \textbf{A}(x) \in \Av(rl)\}$ be the subset of $X$ of words which are associated with single ascents.  Since $\Av(rl)$ is wqo, $Y$ must be finite.  We remove the elements of $Y$ from $X$.  Now let $X \backslash Y = \{w_1, w_2, \dots \}$.  By Lemma \ref{lemma double ascent containment}, $\textbf{A}(w_1), \textbf{A}(w_2), \dots $ is an infinite antichain.  By Lemma \ref{lemma B and W(B)}, $\textbf{A}(w_1), \textbf{A}(w_2), \dots $ is in $C$, and so $C$ is not wqo.

$(\Leftarrow)$ Again, we prove the contrapositive.  Suppose $\sigma_1, \sigma_2, \dots$ is an infinite antichain in $C$.  For each $i=1, 2, \dots$, let $u_i \in \textbf{W}(\sigma_i)$.  By Lemma \ref{lemma B and W(B)} and the contrapositive of Lemma \ref{lemma containment}, $u_1, u_2, \dots$ is an infinite antichain in $\Av(\textbf{W}(B))$, so $\Av(\textbf{W}(B))$ is not wqo.
\end{proof}

\begin{cor}
It is decidable whether a finitely based avoidance set of double ascents under the consecutive order is wqo. 
\end{cor}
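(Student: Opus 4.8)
The plan is to reduce the wqo problem for $\mc{DA}$ to the wqo problem for words over $\{l, r\}$, which is already known to be decidable, via Theorem \ref{thrm words and perms}. Given a finite basis $B$ of double ascents, the first step is to observe that the associated set of words $\textbf{W}(B)$ can be computed effectively. For each $\sigma \in B$ we invert the construction of Definition \ref{defn associated perm}: reading the entries of $\sigma$ in increasing value order, we record the letter $l$ or $r$ according to whether each entry lies to the left of the second entry, or to the right of the first entry, of the consecutive inversion of $\sigma$. When $\sigma$ is not a single ascent this produces a single word; when $\sigma$ is a single ascent of length $n$ it produces the finite set $\{l^a r^c : a + c = n\}$ by Lemma \ref{lemma single ascents}. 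In all cases $\textbf{W}(\sigma)$ is finite, so $\textbf{W}(B)$ is a finite, effectively computable set of words over $\{l, r\}$.

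With $\textbf{W}(B)$ in hand, Theorem \ref{thrm words and perms} states that $\Av(B)$ is wqo if and only if $K = \Av(\textbf{W}(B))$, regarded as an avoidance set of $\mc{W}_{\{l, r\}}$, is wqo, equivalently if and only if $\Gamma_K$ contains no in-out cycle. The latter condition is decidable: following Proposition \ref{prop words} (or directly Proposition \ref{prop wqo words}), one constructs the factor graph $\Gamma_K$ — a finite digraph whose vertices are the words of $K$ of length $b'$, where $b'$ is the maximum length of a word in $\textbf{W}(B)$ — and inspects its finitely many cycles for the in-out property. The resulting algorithm is then: compute $\textbf{W}(B)$, build $\Gamma_K$, and test for in-out cycles; the set $\Av(B)$ is wqo exactly when no in-out cycle is found.

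The genuinely substantive work has already been done: Theorem \ref{thrm words and perms} and the containment lemmas it rests on (Lemmas \ref{lemma double ascent containment}, \ref{lemma single ascent containment}, \ref{lemma containment}, \ref{lemma B and W(B)}), together with the decidability of wqo for words from \cite{mr}. So the only point requiring care in this corollary is that the passage $B \mapsto \textbf{W}(B)$ is algorithmic and yields a finite set, which amounts to the mild bookkeeping observation about single ascents above. I do not anticipate any real obstacle; the single-ascent case is the one place where a one-line justification of finiteness is needed before the corollary follows.
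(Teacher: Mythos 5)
Your proof is correct and follows the same route as the paper: the paper's proof is a one-line appeal to Theorem \ref{thrm words and perms} together with Proposition \ref{prop words}. The extra care you take to note that $B \mapsto \textbf{W}(B)$ is effective and yields a finite set (including the single-ascent case) is a sensible addition but does not change the argument.
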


\begin{proof}
This follows immediately from Theorem \ref{thrm words and perms} and Proposition \ref{prop words}.
\end{proof}

\begin{ex}
\label{ex d ascent non wqo}
Consider the avoidance set $C=\Av(B)$ of double ascents, where $B = \{123\}$.  Here, $\textbf{W}(B) = \{lll, llr, lrr, rrr\}$, so to determine wqo we need to look at the factor graph $\Gamma_K$ of the avoidance set $K=\Av(\textbf{W}(B))$ of $\mc{W}_{\{l, r\}}$ (shown in Figure \ref{fig first da ex}).  Since $\Gamma_K$ contains an in-out cycle, $K$ is not wqo by Proposition \ref{prop wqo words}, and so $C$ is not wqo by Theorem \ref{thrm words and perms}.
\end{ex}

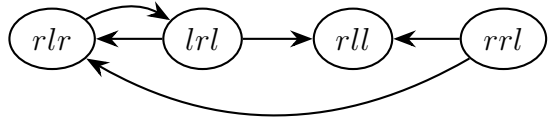
\begin{figure}
\begin{center}
\begin{tikzpicture}[> =  {Stealth [scale=1.3]}, thick]
\tikzstyle{every state}=[thick]
\node [state, shape = ellipse, minimum size = 15pt] (0) at (0, 0) {$rlr$};
\node [state, shape = ellipse, minimum size = 15pt]  (1) at (2, 0) {$lrl$};
\node [state, shape = ellipse, minimum size = 15pt]  (2) at (4, 0) {$rll$};
\node [state, shape = ellipse, minimum size = 15pt]  (3) at (6, 0) {$rrl$};

\path[->]
(0) edge [bend left] node {} (1)
(1) edge node {} (0)
(1) edge node {} (2)
(3) edge [bend left] node {} (0)
(3) edge node {} (2)
;
\end{tikzpicture}
\caption{$\Gamma_K$ from Examples \ref{ex d ascent non wqo} and \ref{ex da atomicity 2}.}
\label{fig first da ex}
\end{center}
\end{figure}

\begin{ex}
\label{ex d ascent wqo}
Now let $B = \{123, 312\}$ and consider the avoidance set $C=\Av(B)$ of double ascents.  Here, $\textbf{W}(312) = rrl$ and $\textbf{W}(123) = \{lll, llr, lrr, rrr\}$, so $\textbf{W}(B) = \{rrl, lll, llr, lrr, rrr\}$.  Now consider the avoidance set $K=\Av(\textbf{W}(B))$ of $\mc{W}_{\{r, l\}}$.  As can be seen in Figure \ref{fig second da ex}, $\Gamma_K$ contains no in-out cycles, meaning that $K$ is wqo by Proposition \ref{prop wqo words}.  Therefore, $C$ is wqo by Theorem \ref{thrm words and perms}.
\end{ex}

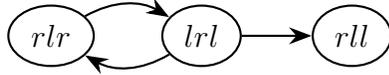
\begin{figure}
\begin{center}
\begin{tikzpicture}[> =  {Stealth [scale=1.3]}, thick]
\tikzstyle{every state}=[thick]
\node [state, shape = ellipse, minimum size = 15pt] (0) at (0, 0) {$rlr$};
\node [state, shape = ellipse, minimum size = 15pt]  (1) at (2, 0) {$lrl$};
\node [state, shape = ellipse, minimum size = 15pt]  (2) at (4, 0) {$rll$};

\path[->]
(0) edge [bend left] node {} (1)
(1) edge [bend left] node {} (0)
(1) edge node {} (2)
;
\end{tikzpicture}
\caption{$\Gamma_K$ from Example \ref{ex d ascent wqo}.}
\label{fig second da ex}
\end{center}
\end{figure}

\section{Atomicity for double ascents}
\label{sec DAs atomicity}

In this section, we will build up to proving our final theorem: decidability of the atomicity problem for double ascents under the consecutive order.  To do this, we will once again make use of the relationship between avoidance sets of double ascents and avoidance sets of words.  As for wqo, we will need to treat single ascents carefully, as they have multiple associated words rather than just one.  In this, we will start by defining the single ascents which cannot be extended to double ascents and, following this, we will explore how these single ascents translate into paths in factor graphs of words.

Throughout this section, we consider a finitely based avoidance set $C=\Av(B)$ of $\mc{DA}$.  We will take $b$ to be the maximum length of an element of $B$, and if $B=\varnothing$ we take $b=1$.  We also consider the corresponding avoidance set $K=\Av(\textbf{W}(B))$ of $\mc{W}_{\{r, l\}}$.  Note that if $B\neq \varnothing$, then $b$ is also the maximum length of an element of $\textbf{W}(B)$.

\begin{defn}
A single ascent $\sigma \in C$ is \emph{non-extendable} if $\rho\in C$ and $\sigma \leq \rho$ together imply that $\rho$ is a single ascent.
\end{defn}

\begin{defn}
A \emph{left-right bicycle} is an induced subgraph of the $b$-dimensional factor graph $\Gamma_{\Av(\textbf{W}(B))}$ all of whose vertices are of the form $l^ar^c$ such that $a+c=b$.  
\end{defn}

We will see in a moment that, as their name suggests, left-right bicycles are always bicycles.

\begin{ex}
\label{ex rb bicycle}
Consider the avoidance set $K=\Av(rl)$ of $\mc{W}_{\{r, l\}}$.  The factor graph $\Gamma_K$ is shown in Figure \ref{fig rb bic ex}, and is a left-right bicycle.
\end{ex}

\begin{lemma}
    All left-right bicycles are bicycles.
\end{lemma}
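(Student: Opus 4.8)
A left-right bicycle is, by definition, an induced subgraph of the $b$-dimensional factor graph $\Gamma_{\Av(\textbf{W}(B))}$ whose vertices are exactly words of the form $l^a r^c$ with $a+c=b$. The key observation is that these words, listed in the natural order $r^b, l r^{b-1}, l^2 r^{b-2}, \dots, l^{b-1} r, l^b$, are the vertices of $\Gamma_{\Av(rl)}$ (cf. Lemma~\ref{lemma Av(br)} and Example~\ref{ex rb bicycle}), and the only possible edges between them in any factor graph over $\{l,r\}$ are forced by the edge condition $u\Harpoon{[2,b]} \cong v\Harpoon{[1,b-1]}$. So the plan is: first describe all edges that can occur among vertices of the form $l^a r^c$; then read off that the resulting graph is a bicycle.

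First I would compute the adjacency. Write $v_k = l^{k} r^{\,b-k}$ for $k \in [0,b]$, so the vertex set is $\{v_0, v_1, \dots, v_b\}$, though a given left-right bicycle may contain only a subset of these. There is an edge $v_i \rightarrow v_j$ precisely when $v_i\Harpoon{[2,b]} = v_j\Harpoon{[1,b-1]}$. Deleting the first letter of $v_i$ gives $l^{i-1} r^{\,b-i}$ if $i \geq 1$, and $r^{\,b-1}$ if $i=0$; deleting the last letter of $v_j$ gives $l^{j} r^{\,b-j-1}$ if $j \leq b-1$, and $l^{b-1}$ if $j = b$. Matching these strings of length $b-1$, which are again of the form $l^\ast r^\ast$, forces: for $1 \le i \le b-1$ the only out-neighbour is $v_{i-1}$ (taking $j = i-1$), so $v_i \to v_{i-1}$; for $i = 0$, deleting the first letter leaves $r^{b-1}$, which equals $v_0\Harpoon{[1,b-1]}$ and $v_1\Harpoon{[1,b-1]}$, giving loops/edges $v_0 \to v_0$ and $v_0 \to v_1$; for $i = b$, deleting the first letter leaves $l^{b-1}$, which equals $v_{b-1}\Harpoon{[1,b-1]}$ and $v_b\Harpoon{[1,b-1]}$, giving $v_b \to v_{b-1}$ and $v_b \to v_b$. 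So on the full vertex set the edge set is: a loop at $v_0$, a loop at $v_b$, the edge $v_0 \to v_1$, and the descending edges $v_i \to v_{i-1}$ for $i = 1, \dots, b$ — in other words, a loop, then a directed path down to another loop. An induced subgraph on any subset of $\{v_0, \dots, v_b\}$ retains exactly those edges whose endpoints survive.

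Now I would verify that this is a bicycle in the sense of the paper's definition (two vertex-disjoint simple cycles joined by a simple path with internal vertices disjoint from both cycles, where cycles may be absent and a present connecting path between two present cycles has length $\ge 1$). The two loops at $v_0$ and $v_b$ are simple cycles of length $1$; they are vertex-disjoint (assuming $b \ge 1$); and the chain $v_b \to v_{b-1} \to \dots \to v_1 \to v_0$ is a simple path connecting them whose internal vertices $v_1, \dots, v_{b-1}$ lie on neither cycle. The edge $v_0 \to v_1$ is a chord within the structure but does not affect bicyclicity — strictly I should note the definition is about the underlying bicycle structure being present; since the paper's Proposition~\ref{prop digraphs atomic} treats "bicycle" as a graph up to this decomposition, I would phrase the conclusion as: the graph has a bicycle as its path-relevant structure, or more carefully just check that after the induced-subgraph restriction the remaining graph is precisely one of the shapes allowed (possibly with one or both end-loops absent, or degenerating to a single path). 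For a proper subset of vertices, if $v_0$ is dropped the initial cycle is absent; if $v_b$ is dropped the terminal cycle is absent; if an intermediate $v_k$ is dropped the graph disconnects, but then each connected component is itself of this form and "left-right bicycle" should be read componentwise — I would add a sentence clarifying that a left-right bicycle is connected (it is an induced subgraph of a factor graph used in the bicycle analysis, so connectivity is the intended reading), or simply handle the connected case. I do not anticipate a genuine obstacle here; the only subtle point is pinning down exactly which induced subgraphs count and checking the edge-count case analysis at the endpoints $i=0$ and $i=b$ carefully, which is the step most prone to an off-by-one slip.
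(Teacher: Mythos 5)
There are two problems here, one computational and one structural. The computational one: your claimed edge $v_0 \rightarrow v_1$ (from $r^b$ to $lr^{b-1}$) does not exist. The edge condition requires $v_0\Harpoon{[2,\hh b]} = v_1\Harpoon{[1,\hh b-1]}$, i.e.\ $r^{b-1} = lr^{b-2}$, which is false; the only out-neighbour of $r^b$ is itself (compare Figure~\ref{fig rb bic ex}, where $rr$ carries only a loop). So the ``chord'' you worry about is spurious, and the adjacency structure on the full vertex set $\{l^b, l^{b-1}r, \dots, r^b\}$ is literally a bicycle: loops at $l^b$ and $r^b$ joined by the simple path $l^b \rightarrow l^{b-1}r \rightarrow \dots \rightarrow r^b$. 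This matters because your fallback --- reading ``bicycle'' as ``has a bicycle as its path-relevant structure'' --- is not a legitimate reading of the paper's definition: an extra edge $r^b \rightarrow lr^{b-1}$ together with $lr^{b-1}\rightarrow r^b$ and the loop at $r^b$ would create an in-out cycle, which destroys both bicyclehood and wqo (Proposition~\ref{prop digraphs wqo}), so a chord cannot be waved away.

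The structural gap is the subset issue, which you flag but do not resolve. If a left-right bicycle could contain only some of the vertices $l^a r^c$, the induced subgraph could be disconnected (drop an intermediate vertex) and hence not a bicycle, and the lemma would fail. The paper closes this with an all-or-nothing argument that your proof is missing: if even one word $l^a r^c$ with $a+c=b$ is a vertex of $\Gamma_K$, then by Lemma~\ref{lemma B and W(B)} (direction (iii)$\Rightarrow$(i), applied with $u = l^ar^c \in \textbf{W}(\sigma)$) the single ascent $\sigma$ of length $b$ lies in $C$, and then by (i)$\Rightarrow$(ii) every element of $\textbf{W}(\sigma)$ --- that is, every $l^{a'}r^{c'}$ with $a'+c'=b$, by Lemma~\ref{lemma single ascents} --- is a vertex of $\Gamma_K$. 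Hence a nonempty left-right bicycle automatically contains all $b+1$ such vertices, and the question of proper subsets never arises. With that step supplied and the endpoint adjacency corrected, your direct computation of the edges does finish the proof; without it, the argument is incomplete.
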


\begin{proof}
    Consider an avoidance set $C=\Av(B)$ of double ascents and its associated avoidance set $K$.  Let $X$ be a left-right bicycle in $\Gamma_{K}$.  Since $X$ contains at least one vertex, which must be of the form $l^ar^c$ such that $a+c=b$, $C$ contains the single ascent $\sigma$ of length $b$ by Lemma \ref{lemma B and W(B)}.  Indeed, Lemma \ref{lemma B and W(B)} tells us that all elements of $\textbf{W}(\sigma)$ are vertices of $\Gamma_{K}$.  These are precisely the words of the form $l^ar^c$ such that $a+c=b$.  Therefore, $X$ contains all of these vertices.  It follows that $X$ is the bicycle whose initial cycle is a loop on $l^b$, whose terminal cycle is a loop on $r^b$ and whose connecting path introduces $r$'s one at a time.  
\end{proof}

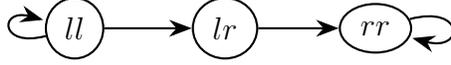
\begin{figure}
\begin{center}
\begin{tikzpicture}[> =  {Stealth [scale=1.3]}, thick]
\tikzstyle{every state}=[thick]
\node [state, shape = ellipse, minimum size = 15pt]  (0) at (0, 0) {$ll$};
\node [state, shape = ellipse, minimum size = 15pt]  (1) at (2, 0) {$lr$};
\node [state, shape = ellipse, minimum size = 15pt]  (2) at (4, 0) {$rr$};

\path[->]
(0) edge [loop left] node {} (0)
(0) edge node {} (1)
(2) edge [loop right] node {} (2)
(1) edge node {} (2)
;
\end{tikzpicture}
\caption{$\Gamma_K$ from Examples \ref{ex rb bicycle} and \ref{ex da atomicity 1}.}
\label{fig rb bic ex}
\end{center}
\end{figure}
\clearpage

\begin{lemma}
\label{lemma non-extendables}
If an \Rev{infinite} avoidance set $C=\Av(B)$ of double ascents contains non-extendable single ascents, the following are equivalent:
\begin{enumerate}
\item $C$ is atomic;
\item All elements of $C$ are single ascents (i.e. $C \subseteq \Av(21)$);
\item $(i)$ $\Gamma_{K}$ is a left-right bicycle and $(ii)$ for any $\sigma \in C_{[1, \hspace{0.7mm}b-1]}$ there exists $\rho \in C_b$ such that $\sigma \leq \rho$.
\end{enumerate}
\end{lemma}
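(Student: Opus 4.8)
The plan is to prove the three conditions equivalent via the cycle $(1)\Rightarrow(2)\Rightarrow(3)\Rightarrow(1)$, using throughout the hypothesis that $C$ contains at least one non-extendable single ascent, call it $\sigma_0$.

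For $(1)\Rightarrow(2)$, suppose $C$ is atomic but contains some $\tau\in C$ that is not a single ascent. By the JEP there is $\theta\in C$ with $\sigma_0,\tau\le\theta$. Since $\sigma_0$ is non-extendable, $\theta$ must be a single ascent; but $\tau\le\theta$ forces $\tau$ to be a single ascent too (a consecutive subpermutation of an increasing permutation is increasing), a contradiction. Hence every element of $C$ is a single ascent, i.e. $C\subseteq\Av(21)$.

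For $(2)\Rightarrow(3)$, assume $C\subseteq\Av(21)$. Then by Lemma~\ref{lemma single ascents} and Lemma~\ref{lemma Av(br)}, every word associated with an element of $C$ has the form $l^ar^c$, so $\textbf{W}(B)$ must force $K=\Av(\textbf{W}(B))$ to consist exactly of words $l^ar^c$ up to the relevant length bound; concretely, since $C$ contains single ascents of every length (any single ascent of length $<$ that of $\sigma_0$ embeds into $\sigma_0$, and $\sigma_0$ extends to arbitrarily long single ascents because those remain single ascents and avoid $B$ — here I need to check $\sigma_0$'s non-extendability does not prevent \emph{single}-ascent extensions; it does not, since non-extendable only forbids producing a double ascent), every word $l^ar^c$ of length $b$ lies in $K$. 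So the vertices of $\Gamma_K$ of the form $l^ar^c$ with $a+c=b$ are all present, and by $(2)$ together with Lemma~\ref{lemma B and W(B)} there can be no other vertices; hence $\Gamma_K$ equals the left-right bicycle on these vertices. This gives $(3)(i)$. For $(3)(ii)$: given $\sigma\in C_{[1,b-1]}$, it is a single ascent of length $<b$, and the single ascent $\rho$ of length $b$ lies in $C$ (shown above) with $\sigma\le\rho$.

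For $(3)\Rightarrow(1)$, assume $(3)$. Condition $(3)(i)$ says $\Gamma_K$ is a left-right bicycle, which is a bicycle by the preceding lemma, and — being a path of $l$'s into a chain introducing $r$'s into a loop of $r$'s — it has no ambiguous paths (words have no ambiguous paths at all, Proposition~\ref{prop words unamb}). By Proposition~\ref{prop atomicity words} (or Theorem~\ref{thrm atomicity} specialised to words), together with $(3)(ii)$ restated for words via Lemma~\ref{lemma B and W(B)} and Lemma~\ref{lemma single ascent containment}, $K$ is atomic. Now transfer atomicity back to $C$: given $\sigma,\rho\in C$, pick $u\in\textbf{W}(\sigma)$, $v\in\textbf{W}(\rho)$ in $K$ (Lemma~\ref{lemma B and W(B)}), find $w\in K$ joining them, and set $\theta=\textbf{A}(w)$; then $\sigma,\rho\le\theta$ by Lemma~\ref{lemma containment} (applied with Lemma~\ref{lemma single ascent containment} to handle the single-ascent cases), and $\theta\in C$ by Lemma~\ref{lemma B and W(B)}. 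So $C$ has the JEP and is atomic.

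The main obstacle I anticipate is the bookkeeping around single ascents in the word–permutation correspondence: because a single ascent has many associated words, the clean statements "$\sigma\le\rho\Leftrightarrow\textbf{W}(\sigma)\le\textbf{W}(\rho)$" only hold when neither is a single ascent (Lemma~\ref{lemma double ascent containment}), and one must consistently fall back on Lemma~\ref{lemma single ascent containment} and Lemma~\ref{lemma B and W(B)}(iii) in the degenerate cases. In particular the step in $(2)\Rightarrow(3)$ asserting that $\Gamma_K$ has \emph{no} vertices beyond the $l^ar^c$ chain needs Lemma~\ref{lemma B and W(B)} carefully in the form: a vertex $w$ of $\Gamma_K$ of length $b$ means $w\in\Av(\textbf{W}(B))$, hence (iii)$\Rightarrow$(i) gives $\textbf{A}(w)\in C\subseteq\Av(21)$, forcing $w$ to avoid $rl$, i.e. $w=l^ar^c$. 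I'd also double-check the claim, used twice, that $\sigma_0$ being non-extendable is compatible with $C$ containing single ascents of all lengths — this is where the precise wording "together imply that $\rho$ is a single ascent" matters and must be read as permitting, not forbidding, longer single ascents.
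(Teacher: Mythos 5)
Your $(1)\Rightarrow(2)$ is exactly the paper's argument, but the other two legs diverge and one of them contains a genuine gap. The gap is in $(2)\Rightarrow(3)$, at the step ``$\sigma_0$ extends to arbitrarily long single ascents because those remain single ascents and avoid $B$''. Remaining a single ascent is not enough: the extension must also avoid $B$, and under hypothesis (2) nothing prevents $B$ from containing a single ascent in addition to $21$. Take $B=\{21,123\}$, so $b=3$: then $C=\{1,12\}$, every element of $C$ is a non-extendable single ascent and (1), (2) hold, yet $C_3=\varnothing$, so the single ascent of length $b$ is not in $C$ and your derivation of (3)(ii) collapses. You correctly flagged this as the place to double-check, but your resolution (``non-extendable only forbids producing a double ascent'') addresses the wrong half of the worry. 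To be fair, the paper's own derivation of (3)(ii) --- from atomicity via the already-established $(1)\Leftrightarrow(2)$ --- tacitly assumes $C_{[b,\infty)}\neq\varnothing$ for the same reason, so the statement itself is fragile in this degenerate case; but your proposed justification is not a correct patch. Note also that for (3)(i) you do not need every $l^ar^c$ to be present: by the paper's definition a left-right bicycle is just an induced subgraph all of whose vertices have that form, and this follows, as in the paper, from $21\notin C$ forcing $rl\in\textbf{W}(B)$, so that every vertex of $\Gamma_K$ avoids $rl$.

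Your $(3)\Rightarrow(1)$ also takes a much heavier route than necessary, and its key appeal is unproved. The paper closes the equivalence with two one-line observations: $(3)\Rightarrow(2)$ because a left-right bicycle forces every word of $K_{[b,\infty)}$ to be of the form $l^ar^c$, hence (using Lemmas \ref{lemma single ascents} and \ref{lemma B and W(B)} and then (3)(ii)) every element of $C$ to be a single ascent; and $(2)\Rightarrow(1)$ because single ascents form a chain. Your detour through atomicity of $K$ requires verifying condition (2) of Proposition \ref{prop atomicity words}, namely that every $w\in K_{[1,b)}$ embeds in some $u\in K_b$, and this does not follow from (3)(ii) for $C$ via Lemma \ref{lemma single ascent containment}: that lemma guarantees only that \emph{some} word associated with $\sigma$ embeds in a given word of $\rho$, not that your chosen $w$ does (for instance $lr\in\textbf{W}(12)$ and $12\le 231$, but $lr\nleq rll=\textbf{W}(231)$). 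The argument can be repaired, but only by first observing that (3) forces everything in sight to be of the form $l^ar^c$ --- at which point you have proved $(3)\Rightarrow(2)$ and the transfer machinery is unnecessary. I would replace this leg by the paper's two observations.
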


\begin{proof}
$(1) \Rightarrow (2)$ Since non-extendable single ascents can only be contained in single ascents, the JEP immediately implies that all elements of $C$ are single ascents.

$(2) \Rightarrow (1)$ Since single ascents form a chain, $C$ must be atomic.

$(3) \Rightarrow (2)$ Since $\Gamma_{K}$ is a left-right bicycle, the only words of length $\geq b$ in $\Av(\textbf{W}(B))$ are of the form $l^ar^c$, where $a+c \geq b$.  All of these words are associated with single ascents by Lemma \ref{lemma single ascents}, so all elements of $C_{[b, \infty)}$ are single ascents by Lemma \ref{lemma B and W(B)}.  It follows from $(3)(ii)$ that in fact all elements of $C$ are single ascents.

$(2) \Rightarrow (3)$ $B$ must contain 21, meaning that $rl \in \textbf{W}(B)$.  This means that vertices of $\Gamma_{K}$ avoid $rl$, so are of the form $l^ar^c$ where $a+c = b$.  Therefore, $\Gamma_{\textbf{W}(B)}$ is a left-right bicycle.  The second condition follows since $C$ is atomic, as we know $(1)$ is equivalent to $(2)$.
\end{proof}

\begin{prop}
\label{thrm no non-extendables}
If an \Rev{infinite} avoidance set $C=\Av(B)$ of double ascents contains no non-extendable single ascents, then $C$ is atomic if and only if the following hold:\vspace{1mm}
\begin{thmenumerate}
\item $\Gamma_{K}$ is strongly connected or a bicycle;
\item For any double ascent $\sigma \in C_{[1, b-1]}$, there is a double ascent $\rho \in C_b$ such that $\sigma \leq \rho$.
\end{thmenumerate}
\end{prop}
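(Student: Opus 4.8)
The plan is to transfer the problem to words over $\{l,r\}$ via the correspondence of Section \ref{sec DAs and words} and to mimic the proof of Theorem \ref{thrm atomicity} (equivalently Proposition \ref{prop atomicity words}). The hypothesis that $C$ has no non-extendable single ascents is used precisely to cope with $\textbf{W}$ being set-valued on single ascents: it lets us replace any single ascent occurring in $C$ by a properly larger non-single ascent of $C$, on which $\textbf{W}$ is single-valued. Throughout I use the dictionary established in Section \ref{sec DAs and words}, chiefly Lemmas \ref{lemma double ascent containment}, \ref{lemma single ascent containment}, \ref{lemma containment} and \ref{lemma B and W(B)}, together with the word-theoretic facts of Section \ref{sec words cons}.

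For $(\Leftarrow)$ I would verify the JEP for $C$ directly. Given $\sigma,\tau\in C$, first use condition (2) to pass to elements of $C$ of length at least $b$, and then use the no-non-extendable hypothesis to replace any resulting single ascent by a non-single ascent of $C$ containing it; after these steps we have $\sigma\le\sigma'$, $\tau\le\tau'$ with $\sigma',\tau'\in C$ non-single ascents of length $\ge b$. Their associated words $\textbf{W}(\sigma'),\textbf{W}(\tau')$ are single words in $K_{[b,\infty)}$ (Lemma \ref{lemma B and W(B)}), hence give paths $\Pi(\textbf{W}(\sigma')),\Pi(\textbf{W}(\tau'))$ in $\Gamma_K$. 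By (1), $\Gamma_K$ is strongly connected or a bicycle, so these two paths join in $\Gamma_K$: in the strongly connected case concatenate them with a connecting path, in the bicycle case invoke that the set of paths of a bicycle is atomic (Proposition \ref{prop digraphs atomic}). Let $u$ be the unique word associated with the joining path $\pi$ (unique by validity of words together with Proposition \ref{prop words unamb}). Then $\textbf{W}(\sigma'),\textbf{W}(\tau')\le u$ by Proposition \ref{prop words 1-1}, so $\sigma'=\textbf{A}(\textbf{W}(\sigma'))\le\textbf{A}(u)$ and $\tau'\le\textbf{A}(u)$ by Lemma \ref{lemma containment}, while $\textbf{A}(u)\in C$ since $u\in\textbf{W}(\textbf{A}(u))\cap\Av(\textbf{W}(B))$ (Lemma \ref{lemma B and W(B)}). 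Thus $\textbf{A}(u)$ joins $\sigma$ and $\tau$, so $C$ is atomic.

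For $(\Rightarrow)$ assume $C$ is atomic. Condition (2) follows exactly as in the proof of Theorem \ref{thrm atomicity}: if some $\sigma\in C_{[1,b-1]}$ embedded in no element of $C_b$, then joining $\sigma$ with an element of $C_{[b,\infty)}$ would give an element of $C$ of length $\ge b$ containing $\sigma$ consecutively, and a length-$b$ window around that copy would contradict the assumption. For condition (1) I would show the set of paths of $\Gamma_K$ has the JEP and invoke Proposition \ref{prop digraphs atomic}. Given paths $\pi,\eta$ of $\Gamma_K$, set $u=\Sigma(\pi)$, $v=\Sigma(\eta)\in K_{[b,\infty)}$ and consider $\textbf{A}(u),\textbf{A}(v)\in C$. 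When neither is a single ascent, run the previous argument in reverse: join $\textbf{A}(u),\textbf{A}(v)$ in $C$ by some $\theta$, observe $\theta$ is a non-single ascent, and pull $\textbf{W}(\theta)$ back to a path through $\pi$ and $\eta$ using Lemma \ref{lemma double ascent containment} and Proposition \ref{prop words 1-1}. For the remaining case one uses that $\textbf{A}(u)$ is a single ascent exactly when $u$ has the form $l^ar^c$ (Lemmas \ref{lemma Av(br)} and \ref{lemma single ascents}); the $l^ar^c$-vertices of $\Gamma_K$ either are all absent or, together, form the simple bicycle consisting of a loop on $l^b$, the descending path $l^b\to l^{b-1}r\to\dots\to r^b$, and a loop on $r^b$ (this is the content of the left-right bicycle lemma). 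As the paths of that bicycle already join one another, one may reduce to the situation where at least one of $\pi,\eta$ avoids it, and there the joining element $\theta\in C$ is again forced to be a non-single ascent.

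I expect this last reduction to be the main obstacle: because $\textbf{W}$ is set-valued on single ascents, a single ascent of $C$ does not canonically recover ``its'' path in $\Gamma_K$, so a join in $C$ does not automatically translate back into a join of the specific paths $\pi,\eta$ we began with. The resolution relies on the rigidity of the $l^ar^c$-portion of $\Gamma_K$ just described (present in full or absent entirely, never partially), which forces any path concentrated there to join with any other path that meets it and lets us always pick a non-single representative on the side that matters. Everything else is routine bookkeeping with the $\textbf{A}/\textbf{W}$ dictionary and the already-proved word results.
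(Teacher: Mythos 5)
Your overall route---transferring everything to $\mc{W}_{\{l,r\}}$ via the maps $\textbf{A}$ and $\textbf{W}$ and invoking Propositions \ref{prop digraphs atomic} and \ref{prop words 1-1}---is exactly the paper's, and your $(\Leftarrow)$ direction and your proof of condition (ii) in $(\Rightarrow)$ match the paper's proof step for step. The divergence is in $(\Rightarrow)$(i). The paper argues by contraposition without any case split: given paths $\eta,\pi$ of $\Gamma_K$ that do not join, it first extends $\textbf{A}(\Sigma(\eta))$ and $\textbf{A}(\Sigma(\pi))$ to non-single ascents $A_\eta^+,A_\pi^+\in C$ (this is where the no-non-extendable hypothesis enters), joins these by some $\theta\in C$, and then deduces $\Sigma(\eta),\Sigma(\pi)\le\textbf{W}(\theta)$ from the inequalities $\Sigma(\eta)\le\textbf{W}(A_\eta^+)$ and $\Sigma(\pi)\le\textbf{W}(A_\pi^+)$, contradicting Proposition \ref{prop words 1-1}. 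You instead prove the JEP for paths directly and split into cases according to whether $\Sigma(\pi)$, $\Sigma(\eta)$ have the form $l^ar^c$; your first two cases are fine and equivalent to the paper's argument.

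The genuine gap is your Case 3 (one path inside the left-right bicycle $L$, one outside), which you acknowledge is only a sketch. The structural fact you prove about $L$ (all of its vertices present or none) is correct, but the claim you then lean on---that any path lying in $L$ joins with any path that meets $L$---is not a consequence of that rigidity alone: for instance $l^b$ can only be entered from $l^b$ itself or from $rl^{b-1}$, so if $rl^{b-1}\notin K_b$ a path looping at $l^b$ can only ever be preceded by further loops at $l^b$, and joining it with a path that meets $L$ only further down the descent $l^b\to l^{b-1}r\to\dots$ requires a reachability argument you have not supplied. What Case 3 really needs is that the \emph{specific} word $\Sigma(\pi)=l^ar^c$, and not merely some word $l^{a'}r^{c'}$ of the same length handed to you by Lemma \ref{lemma single ascent containment}, embeds in the word of a suitable join. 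You have correctly located the crux---this is precisely the point at which the paper's own inequality $\Sigma(\pi)\le\textbf{W}(A_\pi^+)$ needs justification when $\textbf{A}(\Sigma(\pi))$ is a single ascent (note $ll\in\textbf{W}(12)$ and $12\le 132$, yet $ll\nleq lrl=\textbf{W}(132)$)---but your proposal does not close it. To finish you should either prove your rigidity reduction in a usable form, or follow the paper and justify $\Sigma(\pi)\le\textbf{W}(A_\pi^+)$ for a suitably chosen non-single extension $A_\pi^+$.
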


\begin{proof}
$(\Leftarrow)$ Take $\sigma^-, \rho^- \in C$ and extend them via (ii) to $\sigma, \rho \in C_{[b, \infty)}$ respectively, which we can take not to be single ascents since $C$ contains no non-extendable single ascents.  

\Rev{It} follows from (i) and Proposition \ref{prop digraphs atomic} that \Rev{the set of paths of} $\Gamma_{K}$ is atomic. Therefore there exists a path $\pi$ in $\Gamma_{K}$ such that $\Pi(\textbf{W}(\sigma)), \Pi(\textbf{W}(\rho)) \leq \pi$.  By Proposition \ref{prop words 1-1}, $\textbf{W}(\sigma), \textbf{W}(\rho) \leq \Sigma(\pi)$ in $K$.  We observe that $\sigma, \rho \leq \textbf{A}(\Sigma(\pi))$ in $C$ by Lemma \ref{lemma containment}.  Therefore $C$ satisfies the JEP and so is atomic.

$(\Rightarrow)$ First we will prove that (ii) holds.  Suppose to the contrary that there exists $\sigma \in C_{[1, b-1]}$ which is not contained in any element of $C_b$.  Pick an element $\rho \in C_b$.  Since $C$ is atomic, there exists $\tau \in C$ such that $\sigma, \rho \leq \tau$.  As $\rho \leq \tau$, we know that $|\tau| \geq b$. Further, since $\sigma \leq \tau$, there is a consecutive subpermutation $\tau_b$ of $\tau$ of length $b$ such that $\sigma \leq \tau_b$, a contradiction.  We conclude that (ii) holds.

\Rev{Now we will prove that (i) holds.  Again, suppose to the contrary that $\Gamma_{K}$ is neither strongly connected nor a bicycle.  By Proposition \ref{prop digraphs atomic}, the set of paths of $\Gamma_{K}$ is not atomic, so we may take paths $\eta, \pi$ in $\Gamma_{K}$ which do not join.  It follows that $\Sigma(\eta), \Sigma(\pi)$ do not join in $K$ by Proposition \ref{prop words 1-1}.  Therefore, $\Sigma(\eta), \Sigma(\pi)$ cannot both be associated with single ascents, because their paths would be in the same left-right bicycle, and so would join.  If needed, extend $\Sigma(\eta)$ to be the word of a double ascent in $C$ which is not a single ascent, call this $w$.  Note that $\Sigma(\eta)\leq w$.  Since $C$ is assumed to be atomic, there exists $\theta \in C$ such that $\textbf{A}(w), \textbf{A}(\Sigma(\pi)) \leq \theta$.  Therefore, by Lemma \ref{lemma double ascent containment}, $w, \Sigma(\pi) \leq \textbf{W}(\theta)$.  Hence $\Sigma(\eta), \Sigma(\pi) \leq \textbf{W}(\theta)$, yielding a contradiction as $\Sigma(\eta)$ and $\Sigma(\pi)$ do not join.  We conclude that $\Gamma_{K}$ must be strongly connected or a bicycle.}
\end{proof}

\begin{defn}
A left-right bicycle in a factor graph $\Gamma_{K}$ is \emph{isolated} if it is a connected component of $\Gamma_{K}$.
\end{defn}

\begin{lemma}
\label{lemma non-extendable decidable}
$C_{[b, \infty)}$ contains non-extendable single ascents if and only if $\Gamma_{K}$ contains an isolated left-right bicycle. 
\end{lemma}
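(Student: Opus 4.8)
The plan is to prove both implications by translating between the ``non-extendable single ascent'' condition on $C$ and a structural condition on the factor graph $\Gamma_K$, using Lemma~\ref{lemma B and W(B)} as the bridge and Lemma~\ref{lemma single ascents} / Lemma~\ref{lemma Av(br)} to identify which vertices of $\Gamma_K$ correspond to single ascents (namely the words $l^ar^c$ with $a+c=b$). The key observation throughout is that a single ascent $\sigma \in C_b$ is non-extendable precisely when no word $u \in \textbf{W}(\sigma) = \{l^ar^c : a+c=b\}$ can be extended in $\Gamma_K$ to a word containing $rl$ --- equivalently, when the subgraph of $\Gamma_K$ induced on $\{l^ar^c : a+c=b\}$ has no edges leaving it, which is exactly the statement that this (necessarily left-right) bicycle is a connected component, i.e.\ isolated.

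For the forward direction I would argue: suppose $\sigma \in C_{[b,\infty)}$ is a non-extendable single ascent. Since any subpermutation of a single ascent is a single ascent, $\sigma\Harpoon{[1,b]}$ (and any length-$b$ consecutive window) is itself a non-extendable single ascent of length exactly $b$; so we may assume $|\sigma| = b$. By Lemma~\ref{lemma B and W(B)}, all words in $\textbf{W}(\sigma)$ --- which by Lemma~\ref{lemma single ascents} are exactly the words $l^ar^c$ with $a+c=b$ --- are vertices of $\Gamma_K$. Let $X$ be the subgraph induced on these vertices; by the lemma just preceding (``all left-right bicycles are bicycles'') $X$ is the bicycle with a loop on $l^b$, a loop on $r^b$, and the connecting path $l^{b-1}r \to l^{b-2}r^2 \to \dots$. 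I must show $X$ is a connected component, i.e.\ no edge of $\Gamma_K$ enters or leaves $X$. Suppose some edge $l^ar^c \to w$ leaves $X$; then $w \ne l^{a'}r^{c'}$, so $w$ contains a consecutive $rl$, hence $w$ is not a single ascent and neither is any structure associated with a path through this edge. But then, by Lemma~\ref{lemma validity 2}/the path--structure correspondence together with Lemma~\ref{lemma B and W(B)}, the single ascent $\sigma$ of length $b$ associated with $l^ar^c$ extends to a double ascent in $C$ that is not a single ascent, contradicting non-extendability. The same argument (run ``backwards'' along an incoming edge) handles edges entering $X$. Hence $X$ is isolated.

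For the converse, suppose $\Gamma_K$ contains an isolated left-right bicycle $X$. Its vertices are exactly the words $l^ar^c$ with $a+c=b$, and by Lemma~\ref{lemma B and W(B)} the single ascent $\sigma$ of length $b$ lies in $C$. I claim $\sigma$ is non-extendable: if $\sigma \le \rho$ in $C$ with $|\rho| > b$, then by Lemma~\ref{lemma double ascent containment} (or Lemma~\ref{lemma single ascent containment} in the single-ascent case) together with the correspondence $\Pi$, any word in $\textbf{W}(\rho)$ traces a path in $\Gamma_K$ starting from a vertex of $X$; since $X$ is a connected component, that entire path stays in $X$, so the word has the form $l^ar^c$, whence $\rho = \textbf{A}$ of it is a single ascent by Lemma~\ref{lemma single ascents}. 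Thus $\sigma \in C_{[b,\infty)}$ is a non-extendable single ascent.

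The main obstacle is the careful bookkeeping in the forward direction: one must be sure that an edge leaving the induced subgraph $X$ genuinely forces a \emph{non-single-ascent} extension of $\sigma$ \emph{inside} $C = \Av(B)$ (not merely in $\mc{W}_{\{l,r\}}$), which requires combining the path-lifting of Lemma~\ref{lemma validity 2} with Lemma~\ref{lemma B and W(B)}(iii) to pull the witnessing word back to a legitimate double ascent in $C$. Handling incoming versus outgoing edges symmetrically, and confirming that ``edgeless induced subgraph on the $l^ar^c$ vertices'' is equivalent to ``connected component'', are the points where I would be most careful; the rest is a routine unwinding of the dictionary between double ascents and words built up in Section~\ref{sec DAs and words}.
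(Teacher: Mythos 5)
Your overall strategy is the same as the paper's: translate non-extendability of single ascents into the statement that words of the form $l^ar^c$ cannot be extended inside $K$ to a word containing the factor $rl$, and observe that this is precisely the assertion that the left-right bicycle is a connected component of $\Gamma_K$. The converse direction, and the treatment of outgoing versus incoming edges in the forward direction, match the paper's argument essentially step for step.

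There is, however, one genuine gap, at the very first step of your forward direction. You claim that if $\sigma \in C_{[b,\infty)}$ is a non-extendable single ascent then so is its length-$b$ window, justifying this by ``any subpermutation of a single ascent is a single ascent.'' That observation only shows the window is a single ascent; it does not address non-extendability, and non-extendability does not pass down to subpermutations for free: a witness $\rho \in C$ which is not a single ascent and contains the single ascent of length $b$ need not contain $\sigma$ (it may well be shorter than $\sigma$), so it yields no contradiction with the non-extendability of $\sigma$. The reduction is in fact true, but proving it requires essentially the argument you are trying to defer: if the single ascent of length $b$ were extendable, some edge would join the left-right bicycle to its complement, and since for a suitable choice of $j$ the path $\Pi(l^{j}r^{|\sigma|-j})$ ends (or begins) at any prescribed vertex of the bicycle, one could extend such a path across that edge to obtain a word of $K$ containing $rl$ together with a member of $\textbf{W}(\sigma)$, making $\sigma$ itself extendable. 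Alternatively, drop the reduction and argue as the paper does, directly with $\sigma$ of arbitrary length $\geq b$: the paths $\Pi(w)$ for $w \in \textbf{W}(\sigma)$, and all their extensions in $\Gamma_K$, must stay among vertices of the form $l^ar^c$, and since these paths together visit every vertex of the left-right bicycle, no edge can enter or leave it. Either fix is routine, but as written the step is unsupported.
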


\begin{proof}
$(\Rightarrow)$ Let $\sigma \in C_{[b, \infty)}$ be a non-extendable single ascent.  By Lemma \ref{lemma single ascents} any word $w \in \textbf{W}(\sigma)$ is of the form $w=l^ar^c$ where $a+c=b$, and this cannot be extended to a word which is not of this form.  Hence in $\Gamma_{K}$, the path $\Pi(\textbf{W}(\sigma))$ must be wholly contained in a left-right bicycle $X$.  Moreover, since any possible extension of $\textbf{W}(\sigma)$ is of the form $l^ar^c$, any extension of $\Pi(\textbf{W}(\sigma))$ must also be wholly contained in $X$.  Therefore, in $\Gamma_{K}$, $X$ cannot be connected to any vertices outside of itself.  It follows that $X$ is isolated.

$(\Leftarrow)$ Let $X$ be an isolated left-right bicycle in $\Gamma_{K}$.  Then any word associated with a path in $X$ must be of the form $w=l^ar^c$ where $a+c=b$.  By Lemma \ref{lemma single ascents}, $w$ must be associated with a single ascent.  Since no path $\pi$ in $X$ can be extended to a path that leaves $X$, no word $\Sigma(\pi)$ can be extended to a word not of the form $l^ar^c$ where $a+c=b$.  Therefore, single ascents associated with $\Sigma(\pi)$ cannot be extended to double ascents.
\end{proof}

\begin{thrm}
\label{thrm da atomicity}
Let $C=\Av(B)$ be an \Rev{infinite,} finitely based avoidance set of double ascents under the consecutive order and $K=\Av(\textbf{W}(B))$.  Then $C$ is atomic if and only if the following hold:
\begin{enumerate}
\item $\Gamma_{K}$ is strongly connected or a bicycle; and 
\item For any double ascent $\sigma \in C_{[1, b-1]}$ there is a double ascent $\rho \in C_b$ such that $\sigma \leq \rho$. 
\end{enumerate}
\end{thrm}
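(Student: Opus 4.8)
The plan is to prove the equivalence by splitting on whether $C$ contains a non-extendable single ascent, reducing each case to a result already established in this section.

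\emph{Case 1: $C$ contains no non-extendable single ascents.} Here there is essentially nothing to do beyond quoting Proposition \ref{thrm no non-extendables}. Every element of $C$ is by definition a double ascent, so the phrase ``double ascent $\sigma \in C_{[1,b-1]}$'' means exactly ``$\sigma \in C_{[1,b-1]}$'', and similarly for $\rho \in C_b$; hence conditions (1) and (2) of the theorem coincide word for word with conditions (i) and (ii) of that proposition, and the claimed equivalence is immediate.

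\emph{Case 2: $C$ contains a non-extendable single ascent.} By Lemma \ref{lemma non-extendables}, $C$ is atomic if and only if (a) $\Gamma_K$ is a left-right bicycle and (b) every $\sigma \in C_{[1,b-1]}$ embeds in some $\rho \in C_b$. I would show that (a) and (b) together are equivalent to conditions (1) and (2) of the theorem. The forward implication is quick: a left-right bicycle is a bicycle (by the lemma stating exactly this), so (a) yields (1), while (b) is literally (2). For the converse, assume (1) and (2). First I upgrade the case hypothesis to the statement that $C_{[b,\infty)}$ contains a non-extendable single ascent: given a non-extendable single ascent $\sigma \in C$ with $|\sigma| < b$, condition (2) supplies $\rho \in C_b$ with $\sigma \leq \rho$; non-extendability of $\sigma$ forces $\rho$ to be a single ascent, and any $\tau \in C$ with $\rho \leq \tau$ also satisfies $\sigma \leq \tau$ and so is a single ascent, making $\rho$ itself a non-extendable single ascent, now of length exactly $b$. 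Lemma \ref{lemma non-extendable decidable} then produces an isolated left-right bicycle inside $\Gamma_K$, i.e. a left-right bicycle that is an entire connected component. But a strongly connected digraph is connected, and so is a bicycle; thus condition (1) forces $\Gamma_K$ to be connected, so this isolated component must be all of $\Gamma_K$. Hence $\Gamma_K$ is a left-right bicycle, giving (a), and (b) is just (2). Since Cases 1 and 2 are exhaustive, the theorem follows.

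The routine parts are the bookkeeping in Case 1 and the forward direction of Case 2. The step I expect to demand the most care is the converse in Case 2 — specifically the two small observations that condition (2) lets one replace an arbitrary non-extendable single ascent by one of length exactly $b$ (so that Lemma \ref{lemma non-extendable decidable} applies), and that ``strongly connected or a bicycle'' is enough connectivity to conclude that an isolated left-right bicycle component exhausts $\Gamma_K$. I would also sanity-check the degenerate case $b = 1$, where $C_{[1,b-1]}$ is empty and condition (2) is vacuous; I do not anticipate trouble there, since any non-extendable single ascent in $C$ then already has length $\geq 1 = b$ and lies in $C_{[b,\infty)}$.
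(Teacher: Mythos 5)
Your proposal is correct and follows essentially the same route as the paper: split on whether $C$ contains a non-extendable single ascent, dispose of the first case by Proposition \ref{thrm no non-extendables}, and in the second case combine Lemma \ref{lemma non-extendables} with Lemma \ref{lemma non-extendable decidable} (after upgrading the non-extendable single ascent to one of length $\geq b$ via condition (2)). The only cosmetic difference is that in the forward direction of the second case the paper re-derives condition (2) directly from atomicity by a contradiction argument, whereas you read it off from the equivalence already packaged in Lemma \ref{lemma non-extendables}; both are fine.
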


\begin{proof}
If $C$ contains no non-extendable single ascents, the result is given by Proposition \ref{thrm no non-extendables}.  Now consider the case where $C$ contains non-extendable single ascents.  

$(\Rightarrow)$ By Theorem \ref{lemma non-extendables}, $\Gamma_{K}$ is a left-right bicycle, so it is certainly a bicycle and condition $(1)$ holds.  We will prove that $(2)$ holds by contradiction.  Suppose that $\sigma \in C_{[1, b-1]}$ such that $\sigma$ is not contained in any element of $C_b$.  Let $\nu \in C_b$.  Since $C$ is atomic, there exists $\theta \in C_{[b, \infty)}$ such that $\sigma, \nu \leq \theta$.  Since $|\theta| \geq b$, we see that $\sigma$ must be contained in a factor of $\theta$ of length $b$, a contradiction.

$(\Leftarrow)$ For the reverse direction, first note that if $C_{[1, b-1]}$ contains non-extendable single ascents, so does $C_{[b, \infty)}$ by condition $(2)$.  By Lemma \ref{lemma non-extendable decidable}, $\Gamma_{K}$ contains an isolated left-right bicycle.  Since by assumption $\Gamma_{K}$ is either strongly connected or a bicycle, it must be an isolated left-right bicycle.  Since (2) gives condition (3)(ii) of Theorem \ref{lemma non-extendables}, the result follows from Theorem \ref{lemma non-extendables}.  
\end{proof}

\begin{thrm}
It is decidable whether a finitely based avoidance set $C=\Av(B)$ of double ascents under the consecutive order is atomic.
\end{thrm}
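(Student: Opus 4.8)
The plan is to read decidability off directly from Theorem \ref{thrm da atomicity}, which already does the substantive work: it characterises atomicity of $C=\Av(B)$ by the two conditions that (1) $\Gamma_{K}$ is strongly connected or a bicycle, and (2) every $\sigma\in C_{[1,b-1]}$ embeds in some $\rho\in C_b$, where $K=\Av(\textbf{W}(B))$ and $b$ is the maximum length of an element of $B$. So it suffices to show that, given $B$, each of these conditions can be checked algorithmically.

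First I would observe that $\textbf{W}(B)$ is a finite set of words over $\{l,r\}$ which is effectively computable from $B$: for each $x\in B$ one runs the procedure inverting Definition \ref{defn associated perm} (reading the entries of $x$ in value order and recording $l$ or $r$ according to the side of the consecutive inversion), obtaining the single word $\textbf{W}(x)$ when $x$ is not a single ascent and the finite family $\{l^ar^c : a+c=|x|\}$ of Lemma \ref{lemma single ascents} when it is. From $\textbf{W}(B)$ one then constructs the $b$-dimensional factor graph $\Gamma_{K}$ explicitly: its vertices are the finitely many words of length $b$ over $\{l,r\}$ that avoid every element of $\textbf{W}(B)$, and its edges are given by the overlap condition $u\Harpoon{[2,b]}\cong v\Harpoon{[1,b-1]}$. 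Checking whether this finite digraph is strongly connected is decidable by a standard reachability computation, and checking whether it is a bicycle is a finite structural test on a bounded object (two vertex-disjoint simple cycles joined by a simple path); alternatively, decidability of condition (1) is already subsumed in Proposition \ref{prop words}. Hence condition (1) is decidable.

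For condition (2), note that $C_{[1,b-1]}$ and $C_b$ are finite and computable: one enumerates all double ascents of length at most $b$ and discards those containing some element of $B$ as a consecutive subpermutation, a decidable test. Then for each of the finitely many pairs $(\sigma,\rho)$ with $\sigma\in C_{[1,b-1]}$ and $\rho\in C_b$ one checks the (decidable) relation $\sigma\leq\rho$. Thus condition (2) is decidable. Combining the two, one can algorithmically decide whether both conditions of Theorem \ref{thrm da atomicity} hold, and therefore whether $C$ is atomic.

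I do not expect a genuine obstacle here, since all the difficulty was already dealt with in establishing Theorem \ref{thrm da atomicity} (and, upstream of it, Theorem \ref{lemma non-extendables}, Proposition \ref{thrm no non-extendables}, and Lemma \ref{lemma non-extendable decidable}). The only point requiring a moment's care is that single ascents carry several associated words rather than one, so that $\textbf{W}(B)$ is a union of the families $\textbf{W}(x)$; but this has already been absorbed into the statement we are invoking and into the computation of $\textbf{W}(B)$, so nothing further is needed.
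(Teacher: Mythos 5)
Your proposal is correct and follows exactly the paper's route: the paper's own proof simply states that the conditions of Theorem \ref{thrm da atomicity} can be tested algorithmically, and you have filled in the routine details of why (computing $\textbf{W}(B)$, constructing the finite factor graph $\Gamma_K$, and checking the two conditions by finite search). Nothing further is needed.
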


\begin{proof}
\Rev{We can determine whether $C$ is finite by checking whether the factor graph $\Gamma_K$ of $K=\Av(\textbf{W}(B))$ contains cycles.  If $C$ is finite, atomicity can be determined algorithmically.  If $C$ is infinite,} the conditions of Theorem \ref{thrm da atomicity} can be tested algorithmically, so the result follows.
\end{proof}

\begin{ex}
\label{ex da atomicity 1}
Consider the avoidance set $C=\Av(21)$ of double ascents.  Here $\textbf{W}(21)=rl$, so we need to look at the factor graph of the avoidance set $K=\Av(rl)$, which is precisely the one we saw in Figure \ref{fig rb bic ex}.  Since $\Gamma_K$ is a (left-right) bicycle, and it is easy to check condition (2) of Theorem \ref{thrm da atomicity}, $C$ is atomic by Theorem \ref{thrm da atomicity}.
\end{ex}

\begin{ex}
\label{ex da atomicity 2}
Let us return to the avoidance set $C=\Av(B)$ of double ascents, where $B = \{123\}$ from Example \ref{ex d ascent non wqo}.  Here, $K=\Av(lll, llr, lrr, rrr)$ and $\Gamma_K$ is given in Figure \ref{fig first da ex}.  Since $\Gamma_K$ is neither strongly connected nor a bicycle, $C$ is not atomic by Theorem \ref{thrm da atomicity}.
\end{ex}

\begin{ex}
\label{ex da atomicity 3}
Let $X=\{\Rev{2314}, 1243, 1342, 1324, 2413\}$ and let $S$ be the set of double ascents on 4 points.  Let $B=S\backslash X$ and consider the avoidance set $C=\Av(B)$.  It can be seen that $\textbf{W}(B)$ contains all of the words of length four over $\{l, r\}$ except for those associated with elements of $X$: $rllr, llrl, lrll, lrlr, rlrl$.  These will be the vertices of $\Gamma_K$ (shown in Figure \ref{fig da atomicity 3}).  Here, condition (2) of Theorem \ref{thrm da atomicity} does not hold, since there is no element of $C_4$ which contains $\Rev{312}$.  Therefore, $C$ is not atomic by Theorem \ref{thrm da atomicity}.
\end{ex}

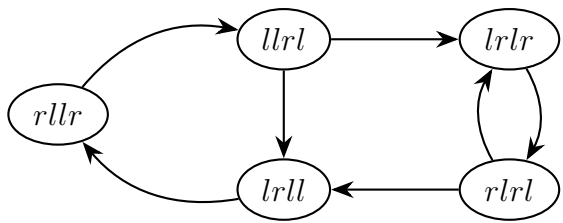
\begin{figure}
\begin{center}
\begin{tikzpicture}[> =  {Stealth [scale=1.3]}, thick]
\tikzstyle{every state}=[thick]
\node [state, shape = ellipse, minimum size = 15pt] (0) at (0, 1) {$rllr$};
\node [state, shape = ellipse, minimum size = 15pt]  (1) at (3, 2) {$llrl$};
\node [state, shape = ellipse, minimum size = 15pt]  (2) at (3, 0) {$lrll$};
\node [state, shape = ellipse, minimum size = 15pt]  (3) at (6, 2) {$lrlr$};
\node [state, shape = ellipse, minimum size = 15pt]  (4) at (6, 0) {$rlrl$};

\path[->]
(0) edge [bend left] node {} (1)
(1) edge node {} (2)
(2) edge [bend left] node {} (0)
(1) edge node {} (3)
(3) edge [bend left] node {} (4)
(4) edge [bend left] node {} (3)
(4) edge node {} (2)
;
\end{tikzpicture}
\caption{$\Gamma_K$ from Example \ref{ex da atomicity 3}.}
\label{fig da atomicity 3}
\end{center}
\end{figure}

\section{Concluding remarks and open problems}
\label{sec conclusion}

We begin with a brief discussion of our current knowledge of wqo for posets of combinatorial structures under consecutive orders.  In this paper, we have established decidability of the wqo problem for posets of bountiful structures under consecutive orders, and we have seen that these are posets for which all paths in factor graphs are ambiguous.  Key to this was the result that ambiguous cycles give rise to infinite antichains.

On the other hand, posets of valid structures for which all paths in factor graphs are unambiguous also have decidable wqo problems.  In these cases, structures and paths are in bijective correspondence, and this also preserves the orderings.  Hence, we can simply use Proposition \ref{prop digraphs wqo} to determine whether there are infinite antichains of paths, which must be in bijective correspondence with infinite antichains of structures.  In this, in-out cycles yield non-wqo.  Examples of structures in this category are words (shown in \cite{mr}) and linear orders. 

Now we may ask about wqo for \emph{intermediate structures}: valid structures whose factor graphs may contain a mixture of ambiguous and unambiguous paths.  Examples of these include permutations (studied in \cite{mr}) and equivalence relations (see \cite{ir}).  Here, both ambiguous cycles and in-out cycles still give rise to non-wqo.  In the case of equivalence relations, these are the only conditions on the factor graph for wqo, whereas for permutations there is an additional condition requiring that there are no \emph{splittable pairs}.  Our first questions invite investigation into wqo for intermediate structures and how this may relate to the cases discussed already.  It is easy to see that posets are intermediate structures; however, unlike for permutations and equivalence relations, the wqo and atomicity problems are open for posets, leading to our first question.

\begin{que}
    Are the wqo and atomicity problems decidable for posets under the consecutive order?  
\end{que}

\begin{que}
What are examples of other intermediate structures?  Can we answer the wqo problem for them using factor graphs, and, if so, what are the conditions required for wqo?  
\end{que}

\begin{que}
Is there an overarching framework encompassing the wqo results for bountiful structures, structures with no ambiguous paths, and intermediate structures?
\end{que}

Turning to the property of atomicity, we have proved decidability of the atomicity problem for bountiful structures.  Meanwhile, Corollary \ref{cor nearly there} reduces the atomicity problem for any poset of valid structures under the consecutive order to the following question.  If answered in the affirmative, this would yield decidability of atomicity for all posets of valid structures under the consecutive order.

\begin{que}
Is it decidable whether a given bicycle in a factor graph of a poset of valid structures contains ambiguous paths?
\end{que}

A further avenue for future research involves posets of invalid structures under consecutive orders.  In this paper we have shown the wqo and atomicity problems to be decidable for two types of invalid structures: overlap free words and double ascents.  For each of these, we exploited connections with the poset of words under the consecutive order.  We also noted in Example \ref{ex factor graphs don't work} that forests are invalid structures; the solution to the wqo problem for forests is given in the first author's thesis \cite{VIThesis}, and proving this involves an extension of factor graphs. 

\begin{que}
Is the atomicity problem decidable for forests under the consecutive order?
\end{que}

\begin{que}
What are other examples of invalid structures, and can we solve the wqo and atomicity problems for them?  Are there connections between these results and results for valid structures under consecutive orders?
\end{que}

\Rev{\section*{Acknowledgements}
We would like to thank the two anonymous reviewers for their careful reading of this paper and their helpful suggestions to improve it.}

\nocite{*}
\bibliographystyle{abbrvnat}
\bibliography{Bountiful}
\label{sec:biblio}

\end{document}